\documentclass{amsart}
\usepackage{color}
\usepackage{amsmath}
\usepackage{kotex}
\usepackage{amssymb}
\usepackage{amsthm}
\usepackage{amscd}
\usepackage{bm}
\usepackage{eucal} 
\usepackage{enumerate}
\usepackage{tikz}
\usepackage{tikz-cd}

\usetikzlibrary{positioning, fit, calc}
\usepackage{shuffle}
\usepackage[utf8]{inputenc}
\usepackage[all]{xy}
\allowdisplaybreaks

\usepackage{hyperref}
\hypersetup{
  colorlinks   = true,          
  urlcolor     = blue,          
  linkcolor    = blue,          
  citecolor   = red             
}

\theoremstyle{plain}
\newtheorem{theorem}{Theorem}[section]
\newtheorem{conjecture}[theorem]{Conjecture}
\newtheorem{lemma}[theorem]{Lemma}
\newtheorem{proposition}[theorem]{Proposition}

\newtheorem{theoremx}{Theorem}


\theoremstyle{definition}
\newtheorem{definition}[theorem]{Definition}
\newtheorem{remark}[theorem]{Remark}

\numberwithin{equation}{section}

\newcommand{\ppar}{${}$\par}

\newcommand\fantome[1]{}

\def\bL{\mathbb L}

\def\bT{\mathbb T}

\def\Fq{\mathbb F_q}

\DeclareMathOperator{\Init}{Init}

\DeclareMathOperator{\Li}{Li}
\DeclareMathOperator{\Si}{Si}

\newcommand{\F}{\mathbb{F}}
\newcommand{\C}{\mathbb{C}}

\newcommand{\bff}{\mathbf{f}}
\newcommand{\bg}{\mathbf{g}}

\newcommand{\bh}{\mathbf{h}}
\newcommand{\bv}{\mathbf{v}}

\newcommand{\fm}{\bm{\mu}}
\newcommand{\fs}{\mathfrak{s}}
\newcommand{\fe}{\bm{\epsilon}}
\newcommand{\fve}{\bm{\varepsilon}}
\newcommand{\frakL}{\mathfrak{L}}

\newcommand{\N}{\ensuremath \mathbb{N}}

\DeclareMathOperator{\Mat}{Mat}

\DeclareMathOperator{\depth}{depth}


\author[B.-H. Im]{Bo-Hae Im}
\address{
Dept. of Mathematical Sciences, KAIST,
291 Daehak-ro, Yuseong-gu,
Daejeon 34141, South Korea
}
\email{bhim@kaist.ac.kr}

\author[H. Kim]{Hojin Kim}
\address{
Dept. of Mathematical Sciences, KAIST,
291 Daehak-ro, Yuseong-gu,
Daejeon 34141, South Korea
}
\email{hojinkim@kaist.ac.kr}

\author[K. N. Le]{Khac Nhuan Le}
\address{
Normandie Université,
Université de Caen Normandie - CNRS,
Laboratoire de Mathématiques Nicolas Oresme (LMNO), UMR 6139,
14000 Caen, France.
}
\email{khac-nhuan.le@unicaen.fr}

\author[T. Ngo Dac]{Tuan Ngo Dac}
\address{
Normandie Université,
Université de Caen Normandie - CNRS,
Laboratoire de Mathématiques Nicolas Oresme (LMNO), UMR 6139,
14000 Caen, France.
}
\email{tuan.ngodac@unicaen.fr}

\author[L. H. Pham]{Lan Huong Pham}
\address{
Institute of Mathematics, Vietnam Academy of Science and Technology, 18 Hoang Quoc Viet, 10307 Hanoi, Viet Nam
}
\email{plhuong@math.ac.vn}

\title[Cyclotomic multiple zeta values]{Zagier-Hoffman's conjectures in positive characteristic II}

\subjclass[2010]{Primary 11M32; Secondary 11G09, 11J93, 11M38, 11R58}

\keywords{Anderson $t$-motives, Anderson-Brownawell-Papanikolas criterion, cyclotomic multiple zeta values, multiple polylogarithms at roots of unity}

\date{\today}



\setcounter{tocdepth}{1}

\begin{document}

\begin{abstract}
Zagier-Hoffman's conjectures predict the dimension and a basis for the $\mathbb Q$-vector spaces spanned by $N$th cyclotomic multiple zeta values (MZV's) of fixed weight where $N$ is a natural number. 

For $N=1$ (MZV's case), half of these conjectures have been solved by the work of Terasoma, Deligne-Goncharov and Brown with the help of Zagier's identity. The other half are completely open. For $N=2$ (alternating MZV's case) and $N=3,4,8$, Deligne-Goncharov and Deligne solved the same half of these conjectures for $N$th-cyclotomic MZV's. For other values of $N$, no sharp upper bound on the dimension is known.

In this paper we completely establish, for all $N$, Zagier-Hoffman's conjectures for $N$th cyclotomic multiple zeta values  in positive characteristic. By working with the tower of all cyclotomic extensions, we present a proof that is uniform on $N$ and give an effective algorithm to express any cyclotomic multiple zeta value in the chosen basis. This generalizes all previous work on these conjectures for MZV's and alternating MZV's in positive characteristic.
\end{abstract}

\maketitle

\tableofcontents


\section*{Introduction}

\subsection{Classical multiple zeta values} \ppar

Let $\mathbb N = \{1, 2, \dots\}$ be the set of positive integers. Multiple zeta values (MZV's for short) are the convergent series given by
\[ \zeta(n_1, \dots, n_r) = \sum_{0 < k_1 < \dots < k_r} \frac{1}{k_1^{n_1} k_2^{n_2} \dots k_r^{n_r}},\]
for $n_i \in \mathbb{N}$ with $n_r \ge 2$. The MZV's of depth $1$ and $2$ were first studied by Euler  \cite{Eul76} in the 18th century and then neglected for more than two centuries. In the nineties of the last century, Zagier~\cite{Zag94} rediscovered the general MZV's and initiated intensive work related to various fields of mathematics including arithmetic geometry, number theory, $K$-theory and knot theory (see for example \cite{Bro12,DG05,Dri90,GKZ06,Hof97,Kon99,Kon03,IKZ06,Iha89,Iha91,LM96,Ter02,Ter06}). At the same time, MZV's were suddenly appeared in various branches of physics, such as in the theory of quantum field theory and deformation quantization (see e.g., \cite{Bro96a,Bro09,BK97,Kon99,Kon03}). 

There exist more general objects called cyclotomic multiple zeta values (cyclotomic MZV's for short). Letting $N \in \mathbb N$ be a positive integer and $\Gamma_N$ be the group of $N$th roots of unity, we define cyclotomic multiple zeta values by
	\[ \zeta \begin{pmatrix}
	\epsilon_1 & \dots & \epsilon_r \\
	n_1 & \dots & n_r
	\end{pmatrix}=\sum_{0<k_1<\dots<k_r} \frac{\epsilon_1^{k_1} \dots \epsilon_r^{k_r}}{k_1^{n_1} \dots k_r^{n_r}} \]
for $\epsilon_i \in \Gamma_N$ and $n_i \in \N$ with $(n_r,\epsilon_r) \neq (1,1)$ for convergence. These objects were first studied systematically by Arakawa, Deligne, Goncharov, Kaneko and Racinet (see \cite{AK99, AK04, Del10, DG05, Gon95,Gon01, Gon05, Rac02}). Since then they have been also studied by a lot of mathematicians and physicists, including Broadhurst, Hoffman, Kreimer, Kaneko, Tsumura \cite{Bro96a,BK97,Hof19,KT13} in various fields of mathematics and physics. For $N=1$ we recover MZV's. For $N=2$ the cyclotomic multiple zeta values are also known as alternating multiple zeta values or Euler sums.

Significant progress has been made, but fundamental questions remain and cyclotomic multiple zeta values are still an active fast-moving field (see for example \cite{BPP20,CDPP24,Li24,Mur22}).

As explained in \cite{BGF,Del13,Zag94}, the main goal of the theory of cyclotomic multiple zeta values is to find all linear relations among cyclotomic MZV's over $\mathbb Q$. It led to several important conjectures formulated by Ihara-Kaneko-Zagier \cite{IKZ06}, Zagier \cite{Zag94} and Hoffman \cite{Hof97}. In this paper we work only with Zagier and Hoffman's conjectures which predict the dimension and a basis for the span of cyclotomic MZV's of fixed weight. We refer the reader to the excellent articles \cite{Bro14,BGF,Del13,Ter06} for more details on Ihara-Kaneko-Zagier's, Zagier-Hoffman's conjectures and related topics. We also refer the reader to the articles of Broadhurst and his colleagues \cite{Bro96a,Bro14b,BBV10} for a data mine of cyclotomic MZV's and further references.

\subsubsection{$N=1$ (the case of MZV's)} \ppar

We work with the case $N=1$, or equivalently with MZV's. Letting $\mathcal Z_k$ be the $\mathbb Q$-vector space spanned by MZV's of weight $k$, we consider a Fibonacci-like sequence of integers $d_k$ as follows. Letting $d_0=1, d_1=0$ and $d_2=1$ we define $d_k=d_{k-2}+d_{k-3}$ for $k \geq 3$. Using numerical evidence Zagier \cite{Zag94} formulated the following conjecture:
\begin{conjecture}[Zagier's conjecture]
For $k \in \N$ we have $\dim_{\mathbb Q} \mathcal Z_k = d_k$.
\end{conjecture}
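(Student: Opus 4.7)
The plan is to split the equality $\dim_{\mathbb{Q}} \mathcal{Z}_k = d_k$ into matching upper and lower bounds, attacked by entirely different machinery. Neither half is elementary: the upper bound has been established by Terasoma and Deligne--Goncharov through mixed Tate motives, while the lower bound has no approach in sight.

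For the upper bound $\dim_{\mathbb{Q}} \mathcal{Z}_k \leq d_k$, I would follow the motivic strategy. First, realize every MZV as the period of a mixed Tate motive over $\mathbb{Z}$ arising from iterated integrals on $\mathbb{P}^1 \setminus \{0,1,\infty\}$. The category $\mathrm{MT}(\mathbb{Z})$ is neutral Tannakian, and its motivic Galois group is a semidirect product $\mathbb{G}_m \ltimes U$ where $U$ is pro-unipotent with graded Lie algebra free on one generator in each odd degree $\geq 3$; this freeness ultimately rests on Borel's computation of $K_{2n-1}(\mathbb{Z}) \otimes \mathbb{Q}$. A direct calculation with free graded Lie algebras, incorporating the $\mathbb{G}_m$-factor responsible for powers of $\zeta^{\mathfrak{m}}(2)$, then yields the Hilbert series
\[ \sum_k \dim_{\mathbb{Q}} \mathcal{H}_k \cdot t^k \;=\; \frac{1}{1 - t^2 - t^3} \;=\; \sum_k d_k \cdot t^k \]
for the graded Hopf algebra $\mathcal{H}$ of motivic MZV's. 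Since classical MZV's are images of motivic ones under the period map, the upper bound follows.

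For the lower bound I would invoke Brown's theorem, which proves Hoffman's spanning conjecture: the set $\{\zeta(n_1, \ldots, n_r) : n_i \in \{2,3\},\ n_1 + \cdots + n_r = k\}$ has cardinality $d_k$ (the Fibonacci recursion reflects prepending a $2$ or a $3$) and spans $\mathcal{Z}_k$ over $\mathbb{Q}$. The lower bound thus reduces to $\mathbb{Q}$-linear independence of this set. Brown's proof of the spanning statement works entirely at the motivic level, by induction on weight, using an explicit Lie coaction on motivic MZV's together with Zagier's evaluation of $\zeta(2, \ldots, 2, 3, 2, \ldots, 2)$ as the crucial combinatorial input.

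The principal obstacle is the final descent from motivic to genuine linear independence. Brown's method actually shows that the motivic lifts $\zeta^{\mathfrak{m}}(n_1, \ldots, n_r)$ with $n_i \in \{2,3\}$ are $\mathbb{Q}$-linearly independent in $\mathcal{H}_k$, and hence form a motivic basis. Transferring this to $\mathcal{Z}_k$ requires injectivity of the period map $\mathcal{H}_k \to \mathcal{Z}_k$, that is the Grothendieck period conjecture restricted to $\mathrm{MT}(\mathbb{Z})$. No currently available technique approaches a transcendence statement of this depth; consequently the plan delivers only $\dim_{\mathbb{Q}} \mathcal{Z}_k \leq d_k$, and the lower bound $\dim_{\mathbb{Q}} \mathcal{Z}_k \geq d_k$ remains open. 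This is precisely the stubborn half that the present paper bypasses by passing to positive characteristic, where the analogous transcendence input is supplied by the Anderson--Brownawell--Papanikolas criterion.
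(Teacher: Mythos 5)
Your proposal is not a proof, and you are right that it cannot be: the statement is labeled a conjecture, and the paper never claims to prove it. What you have written is an accurate account of exactly what the paper itself says about the state of the art. You correctly split the problem into the ``algebraic'' upper bound $\dim_{\mathbb Q}\mathcal Z_k\leq d_k$ (Terasoma, Deligne--Goncharov, via $\mathrm{MT}(\mathbb Z)$ and Borel's computation of $K_{2n-1}(\mathbb Z)\otimes\mathbb Q$) and the ``transcendental'' lower bound, correctly observe that Brown's theorem settles spanning and hence motivic linear independence of the Hoffman elements, and correctly locate the unbridged gap at the injectivity of the period map $\mathcal H_k\to\mathcal Z_k$, i.e.\ the Grothendieck period conjecture for $\mathrm{MT}(\mathbb Z)$. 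This matches the paper's own remark that the transcendental half is ``completely out of reach: we do not know any $k\in\mathbb N$ such that $\dim_{\mathbb Q}\mathcal Z_k>1$,'' and your closing observation that the paper sidesteps this by passing to positive characteristic, where the Anderson--Brownawell--Papanikolas criterion supplies the missing transcendence input, is precisely the motivation behind Theorems \ref{thm: Hoffman cyclotomic MZV} and \ref{thm: Zagier cyclotomic MZV}. In short: no gap in your reasoning, because you were honest that the argument terminates at an open problem rather than at a QED.
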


Hoffman \cite{Hof97} went further and suggested a refinement of Zagier's conjecture.
\begin{conjecture}[Hoffman's conjecture]
The $\mathbb Q$-vector space $\mathcal Z_k$ is spanned by the basis consisting of MZV's of weight $k$ of the form $\zeta(n_1,\dots,n_r)$ with $n_i \in \{2,3\}$.
\end{conjecture}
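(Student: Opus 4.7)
The plan is to lift the entire question into the graded Hopf algebra $\mathcal{H}$ of motivic multiple zeta values of Goncharov--Brown, in which each classical MZV $\zeta(n_1,\ldots,n_r)$ has a canonical lift $\zeta^{\mathfrak{m}}(n_1,\ldots,n_r)$, equipped with a surjective period map $\mathrm{per}: \mathcal{H} \twoheadrightarrow \mathcal{Z}$ preserving the weight grading. I would prove Hoffman's statement in $\mathcal{H}$ first, then transport it to $\mathcal{Z}$.

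For the spanning half I would exploit the motivic coaction $\Delta: \mathcal{H} \to \mathcal{A} \otimes \mathcal{H}$, where $\mathcal{A}$ is the Lie coalgebra cogenerated by the motivic odd zetas $\zeta^{\mathfrak{m}}(2\ell+1)$. Let $\mathcal{H}^{2,3}_k$ denote the $\mathbb{Q}$-span of motivic Hoffman elements of weight $k$. The crucial input is Zagier's identity expressing each $\zeta(2,\ldots,2,3,2,\ldots,2)$ as an explicit rational combination of products $\zeta(2a+1)\cdot\zeta(2,\ldots,2)$; lifted motivically, it forces $\Delta(\mathcal{H}^{2,3}) \subseteq \mathcal{A}\otimes\mathcal{H}^{2,3}$. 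Induction on weight, using the cogeneration structure of $\mathcal{A}$, then yields $\mathcal{H}^{2,3}_k = \mathcal{H}_k$ for every $k$.

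For the linear-independence half I would combine this spanning result with the Deligne--Goncharov upper bound $\dim_{\mathbb{Q}} \mathcal{H}_k \le d_k$, obtained by embedding $\mathcal{H}$ into the universal enveloping algebra of a free Lie algebra generated in odd weights $\ge 3$ (attached to the motivic fundamental group of $\mathbb{P}^1 \setminus \{0,1,\infty\}$). Since the number of Hoffman indices of weight $k$ equals $d_k$ by a direct generating-function count matching the recursion $d_k = d_{k-2}+d_{k-3}$, the two estimates saturate one another, compelling $\dim_{\mathbb{Q}} \mathcal{H}_k = d_k$ and the motivic Hoffman set to be a $\mathbb{Q}$-basis of $\mathcal{H}_k$.

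The main obstacle is the final descent through $\mathrm{per}$: turning motivic $\mathbb{Q}$-linear independence into $\mathbb{Q}$-linear independence of the actual real numbers. Brown's argument already gives $\mathcal{Z}_k = \mathrm{per}(\mathcal{H}^{2,3}_k)$, but transferring the basis property further requires controlling $\ker(\mathrm{per})$. My strategy for this step is to construct, for each weight $k$, explicit $\mathbb{Q}$-linear functionals on $\mathcal{Z}_k$ that separate the $d_k$ Hoffman values, built from iterated-integral cocycles attached to the Knizhnik--Zamolodchikov associator and the Betti--de Rham comparison of the motivic fundamental group; any such separating family would simultaneously yield Hoffman's basis statement and Zagier's dimension conjecture as a corollary.
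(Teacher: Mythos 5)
The statement you are addressing is recorded in the paper as a \emph{conjecture}: the classical, characteristic-zero Hoffman conjecture is open, and the paper offers no proof of it (it only proves the positive-characteristic analogue, Theorem A, by entirely different methods). So there is no ``paper's proof'' to compare against; what matters is whether your argument closes the problem, and it does not.

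The first two-thirds of your proposal is an accurate summary of what is already known. The spanning half --- motivic lifts, the coaction $\Delta$, Zagier's identity for $\zeta(2,\ldots,2,3,2,\ldots,2)$ forcing stability of $\mathcal H^{2,3}$ under the coaction, induction on the level, and the Deligne--Goncharov bound $\dim_{\mathbb Q}\mathcal H_k\le d_k$ --- is Brown's theorem together with Terasoma/Deligne--Goncharov, and it correctly yields that the motivic Hoffman elements form a basis of $\mathcal H_k$ and that the real Hoffman values \emph{span} $\mathcal Z_k$. The fatal gap is exactly where you place ``the main obstacle'': descending linear independence through $\mathrm{per}$. Your proposed fix --- constructing $\mathbb Q$-linear functionals on $\mathcal Z_k$ separating the $d_k$ Hoffman values from iterated-integral cocycles, the KZ associator, and the Betti--de Rham comparison --- cannot work as stated. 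Any functional built from the de Rham or Betti realizations of the motivic fundamental group is defined on $\mathcal H_k$, not on $\mathcal Z_k$; to descend it to the real numbers you must already know that $\ker(\mathrm{per})\cap\mathcal H_k=0$, which is precisely Grothendieck's period conjecture for mixed Tate motives over $\mathbb Z$ in weight $k$. This is equivalent to what you are trying to prove, so the argument is circular. The severity of the gap is illustrated by the fact that even $\dim_{\mathbb Q}\mathcal Z_k>1$ is unknown for every single $k$: no one can currently show that $\zeta(3)$ and $\pi^3$ are $\mathbb Q$-linearly independent. Absent a genuinely new transcendence input (and none is visible in your sketch), the linear-independence half remains out of reach, which is why the paper states the result only as a conjecture and works instead in positive characteristic, where the Anderson--Brownawell--Papanikolas criterion supplies the transcendence tool that has no classical counterpart.
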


In the last two decades one ``half'' of these conjectures (called the algebraic part in \cite{ND21}) has been completely solved by the seminal works of Brown \cite{Bro12}, Deligne-Goncharov \cite{DG05} and Terasoma \cite{Ter02}. Although Zagier-Hoffman's conjectures are easily stated, the proofs of Brown, Deligne-Goncharov and Terasoma use the theory of mixed Tate motives and also an intriguing identity due to Zagier \cite{Zag12}.

\begin{theorem}[Deligne-Goncharov, Terasoma]
For $k \in \N$ we have $\dim_{\mathbb Q} \mathcal Z_k \leq d_k$.
\end{theorem}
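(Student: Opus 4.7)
The plan is to follow the motivic strategy of Deligne-Goncharov \cite{DG05} and Terasoma \cite{Ter02}. The idea is to lift each classical MZV $\zeta(n_1,\dots,n_r)$ to a motivic avatar $\zeta^{\mathfrak{m}}(n_1,\dots,n_r)$ living in a combinatorially controlled graded Hopf algebra $\mathcal{H} = \bigoplus_{k \geq 0} \mathcal{H}_k$ over $\mathbb{Q}$, show that $\dim_{\mathbb{Q}} \mathcal{H}_k = d_k$, and deduce the bound from the canonical period surjection $\mathrm{per}\colon \mathcal{H}_k \twoheadrightarrow \mathcal{Z}_k$.

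First I would work inside the Tannakian category $\mathrm{MT}(\mathbb{Z})$ of mixed Tate motives over $\mathrm{Spec}(\mathbb{Z})$, constructed inside Voevodsky's triangulated category of motives, and take $\mathcal{H}$ to be the graded Hopf algebra of functions on the unipotent radical of its motivic Galois group. The crucial arithmetic input is Borel's computation: $K_{2n-1}(\mathbb{Z}) \otimes \mathbb{Q}$ is one-dimensional for odd $n \geq 3$ and vanishes otherwise. Combined with the Beilinson-Soulé vanishing (known over $\mathbb{Z}$), this identifies $\mathrm{Ext}^1_{\mathrm{MT}(\mathbb{Z})}(\mathbb{Q}(0), \mathbb{Q}(n))$ as one-dimensional exactly for odd $n \geq 3$, and forces the higher Ext groups to vanish. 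Consequently the motivic Galois group is a semidirect product of $\mathbb{G}_m$ with a pro-unipotent group whose graded Lie algebra is \emph{free} on one generator in each odd degree $\geq 3$. A Poincaré-Birkhoff-Witt / Hilbert series computation then yields
\[ \sum_{k \geq 0} \dim_{\mathbb{Q}} \mathcal{H}_k \cdot t^k = \frac{1}{1 - t^2 - t^3}, \]
so $\dim_{\mathbb{Q}} \mathcal{H}_k = d_k$.

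Next I would realize classical MZV's as periods of a single pro-object. The motivic fundamental groupoid of $\mathbb{P}^1 \setminus \{0, 1, \infty\}$, based at tangential base points at $0$ and $1$, defines an ind-object in $\mathrm{MT}(\mathbb{Z})$, and Chen's theory of iterated integrals produces motivic MZV's $\zeta^{\mathfrak{m}}(n_1, \dots, n_r) \in \mathcal{H}_{n_1 + \dots + n_r}$ whose images under $\mathrm{per}$ are the classical MZV's $\zeta(n_1, \dots, n_r)$. Thus $\mathcal{Z}_k$ is the image of the $\mathbb{Q}$-subspace spanned by motivic MZV's of weight $k$, which sits inside $\mathcal{H}_k$, and the desired inequality $\dim_{\mathbb{Q}} \mathcal{Z}_k \leq d_k$ follows from the dimension formula above.

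The main obstacle is precisely the unramifiedness claim at every prime: one has to verify that the motivic fundamental groupoid of $\mathbb{P}^1 \setminus \{0,1,\infty\}$ actually lies in $\mathrm{MT}(\mathbb{Z})$ rather than only in $\mathrm{MT}(\mathbb{Q})$. This requires a delicate analysis of reduction modulo each prime $p$ together with a careful treatment of tangential base points, and it is here that Borel's vanishing in even weight becomes essential (without it the upper bound $d_k$ would be destroyed by spurious generators). A secondary technical hurdle is showing that the motivic iterated integrals are well-defined as elements of $\mathcal{H}$ and satisfy a Goncharov-type coproduct formula compatible with the Galois action; once that combinatorial backbone is in place, the chain ``motivic MZV's $\subset \mathcal{H}_k$, $\dim \mathcal{H}_k = d_k$, $\mathrm{per}$ surjects onto $\mathcal{Z}_k$'' immediately yields the theorem.
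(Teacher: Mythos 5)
This theorem is stated in the paper's introduction purely as background and is not proved there; it is cited to Deligne--Goncharov \cite{DG05} and Terasoma \cite{Ter02}. Your sketch correctly reproduces the standard motivic strategy (work in $\mathrm{MT}(\mathbb{Z})$, use Borel's computation of $K_{2n-1}(\mathbb{Z})\otimes\mathbb{Q}$ to show the motivic Lie algebra is free on one generator in each odd degree $\geq 3$, realize the motivic $\pi_1$ of $\mathbb{P}^1\setminus\{0,1,\infty\}$ in $\mathrm{MT}(\mathbb{Z})$, and push the bound through the period map), and you rightly flag unramifiedness at all primes and the role of tangential base points as the delicate input.

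There is, however, a genuine slip in the dimension count. You define $\mathcal{H}$ to be the graded Hopf algebra of functions $\mathcal{O}(\mathcal{U})$ on the pro-unipotent radical $\mathcal{U}$ of the motivic Galois group, and then assert that $\sum_k \dim_\mathbb{Q}\mathcal{H}_k\, t^k = \tfrac{1}{1-t^2-t^3}$. But since $\mathrm{Lie}\,\mathcal{U}$ is free on one generator in each odd degree $\geq 3$, the PBW/graded-dual computation gives
\[
\sum_{k\geq 0}\dim_\mathbb{Q}\mathcal{O}(\mathcal{U})_k\, t^k \;=\; \frac{1}{1-(t^3+t^5+t^7+\cdots)} \;=\; \frac{1-t^2}{1-t^2-t^3},
\]
which is strictly smaller than $\tfrac{1}{1-t^2-t^3}$. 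The missing factor $\tfrac{1}{1-t^2}$ comes from the Lefschetz motivic period $\zeta^\mathfrak{m}(2)$ (equivalently $(2\pi i)^\mathfrak{m}$ squared), which does \emph{not} live in $\mathcal{O}(\mathcal{U})$: it is a period of the Betti--de Rham comparison torsor, not a function on the de Rham Galois group. The correct receptacle is the algebra of motivic periods $\mathcal{H}^{\mathrm{MT}}$ of $\mathrm{MT}(\mathbb{Z})$, which is non-canonically isomorphic as a graded vector space to $\mathcal{O}(\mathcal{U})\otimes\mathbb{Q}[\zeta^\mathfrak{m}(2)]$, giving Hilbert series $\tfrac{1-t^2}{1-t^2-t^3}\cdot\tfrac{1}{1-t^2}=\tfrac{1}{1-t^2-t^3}$ and hence $\dim\mathcal{H}^{\mathrm{MT}}_k=d_k$. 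As written, your argument has no place for $\zeta(2)$ and its powers, so the surjection $\mathcal{H}_k\twoheadrightarrow\mathcal{Z}_k$ would already fail in weight $2$. Replacing $\mathcal{O}(\mathcal{U})$ by $\mathcal{H}^{\mathrm{MT}}$ (or, equivalently, working with the bitorsor of comparison isomorphisms rather than with $\mathcal{U}$ alone) repairs the argument; this is precisely the point where Deligne--Goncharov and Terasoma are careful.
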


\begin{theorem}[Brown]
For all $k \in \N$, the $\mathbb Q$-vector space $\mathcal Z_k$ is spanned by MZV's of weight $k$ of the form $\zeta(n_1,\dots,n_r)$ with $n_i \in \{2,3\}$. 
\end{theorem}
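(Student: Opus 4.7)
The plan is to lift the statement to motivic MZV's and then exploit the coaction coming from the motivic Galois group of mixed Tate motives over $\mathbb{Z}$. By Deligne-Goncharov, there is a commutative graded $\mathbb{Q}$-Hopf algebra $\mathcal{H}=\bigoplus_{k\geq 0}\mathcal{H}_k$ of motivic periods, a period map $\mathrm{per}:\mathcal{H}\to\mathbb{R}$, and canonical elements $\zeta^{\mathfrak{m}}(n_1,\dots,n_r)\in\mathcal{H}_{n_1+\cdots+n_r}$ whose periods are the usual MZV's. Let $\mathcal{H}^{2,3}_k\subseteq\mathcal{H}_k$ be the $\mathbb{Q}$-span of the $\zeta^{\mathfrak{m}}(n_1,\dots,n_r)$ with $n_i\in\{2,3\}$ and total weight $k$. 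It suffices to prove the motivic analogue $\mathcal{H}^{2,3}_k=\mathcal{H}_k$, since applying $\mathrm{per}$ then yields the spanning statement for $\mathcal{Z}_k$.

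The engine is the infinitesimal motivic coaction. Because the Lie algebra of the unipotent radical of the motivic Galois group of $\mathcal{MT}(\mathbb{Z})$ is free on one generator in each odd degree $\geq 3$, one obtains canonical derivations
\[
D_{2r+1}:\mathcal{H}_k\longrightarrow\mathcal{L}_{2r+1}\otimes\mathcal{H}_{k-2r-1},\qquad r\geq 1,
\]
where $\mathcal{L}$ denotes the Lie coalgebra of indecomposables; these are made explicit on motivic iterated integrals by Goncharov's combinatorial formula. I would proceed by induction on $k$, assuming $\mathcal{H}^{2,3}_j=\mathcal{H}_j$ for all $j<k$. A direct computation from Goncharov's formula shows that each $D_{2r+1}$ sends $\mathcal{H}^{2,3}_k$ into $\mathcal{L}_{2r+1}\otimes\mathcal{H}^{2,3}_{k-2r-1}$, so the inductive step reduces to the following non-degeneracy: filtering $\mathcal{H}^{2,3}_k$ by the number $\ell$ of $3$'s appearing in the string and passing to associated graded pieces, the induced map from $\mathrm{gr}^{\ell}\mathcal{H}^{2,3}_k$ via $\bigoplus_r D_{2r+1}$ should be injective.

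The main obstacle is precisely this non-degeneracy, which is also the only step where transcendence-type input is required. The decisive ingredient is a motivic lift of Zagier's identity \cite{Zag12} expressing $\zeta^{\mathfrak{m}}(\{2\}^a,3,\{2\}^b)$ as an explicit $\mathbb{Q}$-linear combination of products $\zeta^{\mathfrak{m}}(2s+1)\,\zeta^{\mathfrak{m}}(\{2\}^{a+b-s})$ with nonzero binomial coefficients; iterating this identity on level-$\ell$ strings produces, modulo level $<\ell$, a matrix of binomials whose determinant can be shown to be a non-vanishing product of rationals. Granting the non-degeneracy, the induction closes cleanly: for any $\eta\in\mathcal{H}_k$, the inductive hypothesis places $D_{2r+1}(\eta)$ in $\mathcal{L}_{2r+1}\otimes\mathcal{H}^{2,3}_{k-2r-1}$, and surjectivity from $\mathcal{H}^{2,3}_k$ produced by the non-degeneracy lets us subtract an element of $\mathcal{H}^{2,3}_k$ to assume $D_{2r+1}(\eta)=0$ for every $r\geq 1$. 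Such an $\eta$ lies in $\mathbb{Q}\cdot\zeta^{\mathfrak{m}}(k)+(\mathcal{H}_{>0}\cdot\mathcal{H}_{>0})_k$, and both summands are contained in $\mathcal{H}^{2,3}_k$ by the inductive hypothesis, forcing $\eta\in\mathcal{H}^{2,3}_k$.
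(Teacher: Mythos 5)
The paper does not prove this statement; it is cited as Brown's theorem for background only, since the paper's own results are in positive characteristic. A later remark in the paper does summarize Brown's strategy (motivic lift, induction on the level, Zagier's identity), and your outline tracks that strategy faithfully. However, the final paragraph of your proposal contains a genuine gap. The non-degeneracy you isolate --- injectivity of the level-graded map induced by $\bigoplus_r D_{2r+1}$ on $\mathrm{gr}^{\ell}\mathcal{H}^{2,3}_k$ --- does not by itself produce the surjectivity you then invoke, namely that any $\eta\in\mathcal{H}_k$ can be corrected by some $\xi\in\mathcal{H}^{2,3}_k$ so that $D_{2r+1}(\eta-\xi)=0$ for all $r$. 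What makes Brown's argument close is a dimension count: the injectivity, propagated down the level filtration, shows that the $d_k$ Hoffman motivic MZV's of weight $k$ are linearly independent, while the structure of $\mathcal{MT}(\mathbb{Z})$ (the motivic Lie algebra is free on one generator in each odd degree $\geq 3$, via Borel and Deligne--Goncharov) gives the a priori bound $\dim_{\mathbb{Q}}\mathcal{H}_k\leq d_k$. Only the combination forces $\mathcal{H}^{2,3}_k=\mathcal{H}_k$; without the upper bound, the subtraction step is unjustified.

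A secondary but real problem is the claim that $\mathbb{Q}\cdot\zeta^{\mathfrak{m}}(k)$ and $(\mathcal{H}_{>0}\cdot\mathcal{H}_{>0})_k$ lie in $\mathcal{H}^{2,3}_k$ ``by the inductive hypothesis.'' The inductive hypothesis only gives $\mathcal{H}_j=\mathcal{H}^{2,3}_j$ for $j<k$. A product of two lower-weight Hoffman elements is a weight-$k$ element that is not a priori a $\mathbb{Q}$-combination of weight-$k$ Hoffman words --- that is exactly the spanning statement being proved, so this is circular. Likewise $\zeta^{\mathfrak{m}}(k)$ sits in weight $k$; for odd $k$ its membership in $\mathcal{H}^{2,3}_k$ must be extracted by inverting Zagier's identity within weight $k$, not from the inductive hypothesis. (Incidentally, Brown's kernel lemma identifies $\ker\bigoplus_{2r+1<k}D_{2r+1}$ in weight $k$ with $\mathbb{Q}\cdot\zeta^{\mathfrak{m}}(k)$ alone; widening it to include the product ideal does not help and only deepens the circularity.)
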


We mention that the second half of these conjectures called the transcendental part in \cite{ND21} is completely out of reach: we do not know any $k \in \mathbb N$ such that $\dim_{\mathbb Q} \mathcal Z_k >1$.

\subsubsection{$N=2$ (the case of alternating MZV's) and $N=3,4,8$} \label{sec: Deligne} \ppar

In the seminal paper \cite{Del10} Deligne succeeded in proving the same half of Zagier-Hoffman's conjectures for cyclotomic MZV's for $N=2,3,4,8$ (see \cite[Théorèmes 6.1 and 7.2]{Del10}). For these few exceptional values, he was able to use the theory of mixed Tate motives and obtained a generating set for the $\mathbb Q$-span of $N$th cyclotomic MZV's of fixed weight. Thus the upper bound obtained in \cite{DG05} for $N=2,3,4,8$ is sharp.  As mentioned in {\it loc. cit.} Deligne's result was inspired by numerical computations due to Broadhurst \cite{Bro96a} in 1996 related to computations of Feynman integrals. Later Glanois \cite{Gla16} gave another proof of Deligne's result.

A technical but crucial point that we want to emphasize is that Deligne used the depth filtration for $N=2,3,4,8$ while Brown used the level filtration for $N=1$. It has already been noted that the depth is pathological for $N=1$ (see \cite[page 102, lines 23-25]{Del10}).

\subsubsection{Other values of $N$} \ppar

For other values of $N$ Deligne and Goncharov \cite{DG05} proved an upper bound on the dimension of the span of $N$th cyclotomic MZV's. Unfortunately, for many $N$ Goncharov \cite{Gon01b} showed that these bounds are not sharp. To our knowledge, no sharp upper bound on the dimensions is known. We mention that for $N=6$ partial results have been obtained by Deligne \cite[Théorème 8.9]{Del10} and also by Glanois \cite{Gla16}.

\subsection{Multiple zeta values in positive characteristic} \ppar

By a well-known analogy between number fields and function fields (see for example \cite{Iwa69,MW83,Wei39}), we switch to the setting of function fields in positive characteristic. Let $A=\Fq[\theta]$ be the polynomial ring in the variable $\theta$ over a finite field $k:=\Fq$ of $q$ elements of characteristic $p>0$. We denote by $A_+$ the set of monic polynomials in $A$.  Let $K=\Fq(\theta)$ be the fraction field of $A$ equipped with the rational point $\infty$. Let $K_\infty=k(\!(1/\theta)\!)$ be the completion of $K$ at $\infty$ and $\C_\infty$ be the completion of a fixed algebraic closure $\overline K$ of $K$ at $\infty$.

We fix $N \in \mathbb N$ and denote by $k_N \subset \bar k$ the cyclotomic field over $k$ generated by a primitive $N$th root $\zeta_N$ of unity. The group of $N$th roots of unity is denoted by $\Gamma_N$ whose cardinality is denoted by $\gamma_N$. We put $A_N=k_N[\theta]$, $K_N = k_N(\theta)$ and $K_{N,\infty}=k_N(\!(1/\theta)\!)$.

Based on the pioneering work of Carlitz \cite{Car35} and Thakur \cite{Tha04}, Harada \cite{Har21,Har23} introduced the notion of $N$th cyclotomic multiple zeta values in positive characteristic. Letting $\fs=(s_1,\dots,s_r) \in \N^r$ and $\fe=(\epsilon_1,\dots,\epsilon_r) \in (\Gamma_N)^r$ we introduce
\begin{equation*}
    \zeta_A \begin{pmatrix}
 \fe  \\
\fs  \end{pmatrix}  = \sum \dfrac{\epsilon_1^{\deg a_1} \dots \epsilon_r^{\deg a_r }}{a_1^{s_1} \dots a_r^{s_r}}  \in K_{N,\infty},
\end{equation*}
where the sum is over all tuples
$(a_1,\ldots,a_r) \in A_+^r$ with $\deg (a_1)>\cdots>\deg (a_r)$. We define the depth and weight of $ \zeta_A \begin{pmatrix}
 \fe  \\
\fs  \end{pmatrix}$ in a similar way, that is, $r$ is the depth and $w(\fs) = s_1 + \dots + s_r$ is the weight. When $N=1$ (resp. $N=1$ and $r=1$) we recover the notion of MZV's defined by Thakur (resp. zeta values introduced by Carlitz). For further references on the MZV's in positive characteristic, see \cite{AT90,AT09,Gos96,Pel12,Tha04,Tha09,Tha10,Tha17,Yu91}.

In \textit{loc. cit.}, Harada also showed some fundamental properties of cyclotomic MZV's including non-vanishing, sum-shuffle relations, period interpretation and linear independence.

As in the classical setting the main goal of the theory of cyclotomic MZV's in positive characteristic is to comprehend all of their $\overline{K}$-linear relations. Harada \cite{Har23} showed that it is equivalent to understand all $K_N$-linear relations among cyclotomic MZV's of the same weight. Letting $w \in \mathbb N$ we denote by $\mathcal{CZ}_{N,w}$ the $K_N$-span of $N$th cyclotomic MZV's of weight $w$. Then the positive characteristic version of Zagier-Hoffman's conjectures for cyclotomic MZV's consist in determining the dimension and finding an explicit basis of $\mathcal{CZ}_{N,w}$.

\subsection{Statement of the main theorems} \ppar

\subsubsection{Main results} \ppar

In this paper we prove, for all $N$, Zagier-Hoffman's conjectures in positive characteristic for $N$th cyclotomic MZV's. This could be seen as the final part of the series of work previously done by the fourth author in \cite{ND21} and the authors in \cite{IKLNDP22}.

The main result reads as follows:

\begin{theoremx}[Hoffman's conjecture in positive characteristic] \label{thm: Hoffman cyclotomic MZV}
Let $N \in \mathbb N$. For $w \in \mathbb N$ we recall that $\mathcal{CZ}_{N,w}$ denotes the $K_N$-span of $N$th cyclotomic MZV's of weight $w$. Let $\mathcal{CS}_{N,w}$ be the set of Carlitz multiple polylogarithms at $N$th roots of unity $\Li \begin{pmatrix}
 \epsilon_1 & \dotsb & \epsilon_n \\
s_1 & \dotsb & s_n \end{pmatrix}$ such that $q \nmid s_i$ and $\epsilon_i \in \Gamma_N$ for all $i$ (see \S \ref{sec: CMPL} for more details). Then the set $\mathcal{CS}_{N,w}$ forms a basis for $\mathcal{CZ}_{N,w}$.
\end{theoremx}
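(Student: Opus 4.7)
The approach I would take breaks into three ingredients: a spanning statement for $\mathcal{CS}_{N,w}$ inside $\mathcal{CZ}_{N,w}$, a matching linear-independence statement for $\mathcal{CS}_{N,w}$, and a combinatorial dimension count ensuring the two numbers coincide. The theorem then follows by comparing the two bounds.

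\textbf{Spanning.} First I would invoke Harada's period interpretation to realize each cyclotomic MZV $\zeta_A\begin{pmatrix}\fe\\\fs\end{pmatrix}$ as an explicit $K_N$-linear combination of Carlitz multiple polylogarithms $\Li\begin{pmatrix}\fe'\\\fs'\end{pmatrix}$ of the same weight, with $\epsilon'_i\in\Gamma_N$. The remaining task is to reduce to indices with $q\nmid s_i$. In characteristic $p$ the Carlitz polylogarithms satisfy a Frobenius-twist identity that rewrites $\Li(\dots,qs_i,\dots)$ as a polynomial expression in lower-weight polylogarithms, exploiting $a^{qs}=(a^s)^q$ for $a\in A_+$. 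Combined with the sum-shuffle relations among $N$th cyclotomic MZV's proved by Harada and an induction on the weight, this expresses every element of $\mathcal{CZ}_{N,w}$ as a $K_N$-linear combination of elements of $\mathcal{CS}_{N,w}$.

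\textbf{Upper bound via an abstract algebra.} Parameterize $\mathcal{CS}_{N,w}$ by admissible compositions $(\fe;\fs)$ with $\epsilon_i\in\Gamma_N$, $q\nmid s_i$ and $s_1+\dots+s_r=w$; a Fibonacci-type recursion in $w$ computes $|\mathcal{CS}_{N,w}|$ in terms of $q$, $\gamma_N$ and data in lower weights. On the algebraic side I would introduce a graded $K_N$-algebra $\mathcal{AS}_{N,\bullet}$ generated in each weight by symbols indexed by admissible compositions, with defining relations precisely mirroring the identities used in the spanning step. The argument of the previous paragraph upgrades to a surjection $\mathcal{AS}_{N,w}\twoheadrightarrow\mathcal{CZ}_{N,w}$, and a hands-on dimension computation for $\mathcal{AS}_{N,w}$ yields $\dim_{K_N}\mathcal{CZ}_{N,w}\le|\mathcal{CS}_{N,w}|$. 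This part follows the template developed in \cite{ND21} for $N=1$ and in \cite{IKLNDP22} for $N=2$, but must now be carried out uniformly in $N$.

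\textbf{Lower bound via Anderson-Brownawell-Papanikolas.} Linear independence of $\mathcal{CS}_{N,w}$ over $K_N$ is the deepest ingredient. For each $(\fe;\fs)\in\mathcal{CS}_{N,w}$ I would attach an Anderson $t$-motive whose period vector contains $\Li(\fe;\fs)$ and whose Frobenius-twist matrix is governed by a $\sigma$-linear system solvable in the Tate algebra. Applying the ABP criterion converts any hypothetical $K_N$-linear relation among the $\Li(\fe;\fs)$ of weight $w$ into a matrix identity over $\overline{K}[t]$; the weight grading together with $\Gal(K_N/K)$-equivariance then forces such a relation to be trivial.

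\textbf{Main obstacle.} The step I expect to be the hardest is assembling the $t$-motive construction uniformly in $N$, i.e.\ choosing the twisting data so that the periods simultaneously separate all polylogarithms $\Li\begin{pmatrix}\fe\\\fs\end{pmatrix}$ across the entire cyclotomic tower $\bigcup_N K_N$, rather than level by level. This uniformity is what distinguishes the present theorem from the earlier $N=1,2$ results and is where the real technical work will live; once it is in place, the dimension bookkeeping in the upper-bound step should be routine even if combinatorially intricate.
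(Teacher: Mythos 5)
Your high-level architecture---span plus linear independence of $\mathcal{CS}_{N,w}$ gives a basis, with the independence handled by attaching $t$-motives and invoking the Anderson-Brownawell-Papanikolas criterion---matches the paper. But there are several concrete gaps where the details you sketch would not go through or omit the real work.

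On the \textbf{spanning} side, a single ``Frobenius-twist identity'' is not enough to land in $\mathcal{CS}_{N,w}$. The paper's reduction is a two-stage pipeline: first one uses the binary-relation calculus (the operators $\mathcal{B}^*$, $\mathcal{C}$, $\mathcal{BC}$ applied to the fundamental relation $\Si_d\binom{\epsilon}{q}+\epsilon^{-1}D_1\Si_{d+1}\binom{\epsilon\ 1}{1\ q-1}=0$) to show $\mathcal{CZ}_{N,w}$ is spanned by the intermediate set $\mathcal{CT}_{N,w}$ of cyclotomic MZV's with $s_1,\dots,s_{n-1}\le q$ and $s_n<q$; then one observes that for such indices $\zeta_A\binom{\fe}{\fs}=\Li\binom{\fe}{\fs}$ (Lemma~\ref{agree}), identifies $\mathcal{CZ}_{N,w}=\mathcal{CL}_{N,w}$ (Theorem~\ref{thm:bridge}), and finally trades $\mathcal{CT}_{N,w}$ for $\mathcal{CS}_{N,w}$ by a triangular change of basis modulo $D_1$ (Theorem~\ref{thm: strong BD CMPL}). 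Your proposed ``abstract algebra $\mathcal{AS}_{N,\bullet}$'' is redundant: once the spanning set $\mathcal{CS}_{N,w}$ is exhibited, the upper bound $\dim\le|\mathcal{CS}_{N,w}|$ is automatic and no auxiliary graded algebra is needed.

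On the \textbf{linear independence} side, your claim that ``the weight grading together with $\Gal(K_N/K)$-equivariance then forces such a relation to be trivial'' papers over the genuinely hard part. What the paper actually does is: (i) after ABP, reduce to a system of $\sigma$-linear equations of order $R=[k_N:k]$ (Eqs.~\eqref{eq: delta}--\eqref{eq: delta emptyset}), not order $1$ as in the $N=1$ or $N=q-1$ cases, because Harada's dual $t$-motives are of level $R$; (ii) run a descent by induction on $N+w$ across the whole cyclotomic tower; (iii) prove (Proposition~\ref{prop: solving first equations}) that the order-$R$ system has at most one solution in $k_N[t]$ up to a $k_N(t)$-scalar, and that when it is solvable there is in fact a solution in $k[t]$ satisfying an order-$1$ system, which then reduces to the known $N=1$ analysis. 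Galois equivariance enters only as Lemma~\ref{lem: same character}, which groups the terms by $\chi(\fe)^R$; by itself it does not kill the relation. Your ``main obstacle'' correctly flags uniformity in $N$ as the crux, but the obstacle is not the choice of twisting data (Harada's untwisted $L$-series $\frakL^{\mathrm{un}}$ already handles that); it is the passage from level-$R$ to level-$1$ $\sigma$-difference equations, for which no mechanism appears in your sketch.
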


As a direct consequence, we obtain
\begin{theoremx}[Zagier's conjecture in positive characteristic] \label{thm: Zagier cyclotomic MZV}
We keep the above notation. We recall that $\gamma_N=|\Gamma_N|$ and define a Fibonacci-like sequence $d_N(w)$ as follows. We put
\begin{equation*}
    d_N(w) = \begin{cases}
    		1 & \text{if } w=0, \\
		\gamma_N (\gamma_N+1)^{w-1}& \text{if } 1 \leq w < q, \\
         \gamma_N ((\gamma_N+1)^{w-1} - 1) &\text{if } w = q,
		 \end{cases}
\end{equation*}
and for $w>q$, $d_N(w)=\gamma_N \sum \limits_{i = 1}^{q-1} d_N(w-i) + d_N(w - q)$. Then 
	\[ \dim_{K_N} \mathcal{CZ}_{N,w} = d_N(w). \]
\end{theoremx}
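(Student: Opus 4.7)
The plan is to deduce Theorem B directly from Theorem A by a straightforward combinatorial count. Since Theorem A asserts that $\mathcal{CS}_{N,w}$ is a $K_N$-basis of $\mathcal{CZ}_{N,w}$, the dimension $\dim_{K_N}\mathcal{CZ}_{N,w}$ equals the cardinality $D(w):=|\mathcal{CS}_{N,w}|$, so it suffices to show that $D(w)$ satisfies the same initial values and Fibonacci-like recursion as $d_N(w)$.

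Elements of $\mathcal{CS}_{N,w}$ are parametrized by pairs $(\fs,\fe)$, where $\fs=(s_1,\dots,s_n)$ runs over compositions of $w$ whose parts $s_i$ are positive integers with $q \nmid s_i$, and $\fe=(\epsilon_1,\dots,\epsilon_n) \in \Gamma_N^n$. Stripping off the leading pair $(s_1,\epsilon_1)$ (with $D(0)=1$ corresponding to the empty composition) yields immediately, for $w \geq 1$,
\[ D(w) = \gamma_N \sum_{\substack{1 \leq s \leq w \\ q \nmid s}} D(w-s). \]

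From this one-parameter recursion I would verify the three cases of the statement by induction on $w$. For $1 \leq w < q$ the divisibility condition is vacuous, so $D(w) = \gamma_N \sum_{j=0}^{w-1} D(j)$, and a direct induction gives $D(w) = \gamma_N(\gamma_N+1)^{w-1}$. For $w=q$, only the value $s=q$ is forbidden in the sum, whence $D(q) = \gamma_N \sum_{j=1}^{q-1} D(j) = \gamma_N((\gamma_N+1)^{q-1}-1)$, matching the definition of $d_N(q)$. For $w>q$, I would split the defining sum at $s=q$: the terms $1 \leq s \leq q-1$ contribute $\gamma_N \sum_{i=1}^{q-1} D(w-i)$, while the substitution $s=s'+q$ identifies the remaining terms (those with $s > q$ and $q \nmid s$) with $\gamma_N \sum_{1 \leq s' \leq w-q,\,q \nmid s'} D(w-q-s')$, which is precisely $D(w-q)$ by the same recursion applied at $w-q \geq 1$. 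This produces the identity
\[ D(w) = \gamma_N \sum_{i=1}^{q-1} D(w-i) + D(w-q), \]
which is exactly the Fibonacci-like relation defining $d_N(w)$, and completes the induction.

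The main obstacle is of course Theorem A; once Hoffman's basis is in hand, Theorem B reduces to the elementary counting above, and no further motivic or transcendence input is needed for the dimension count itself. I would thus present Theorem B in a short corollary-style argument immediately after Theorem A.
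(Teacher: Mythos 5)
Your proposal is correct and essentially the same as the paper's argument: both reduce Theorem B to the observation that, by Theorem A, $\dim_{K_N}\mathcal{CZ}_{N,w}=|\mathcal{CS}_{N,w}|$, and then verify that $|\mathcal{CS}_{N,w}|$ satisfies the same Fibonacci-like recursion (with the adjusted value at $w=q$) that defines $d_N(w)$. The paper states the recursion for $d_N'(w):=|\mathcal{CS}_{N,w}|$ somewhat tersely, whereas you derive it explicitly via the cleaner one-step recursion $D(w)=\gamma_N\sum_{1\le s\le w,\,q\nmid s}D(w-s)$ obtained by stripping the leading pair $(s_1,\epsilon_1)$ and then splitting at $s=q$ — a slightly more detailed but conceptually identical route.
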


Surprisingly, our proof of Theorem \ref{thm: Hoffman cyclotomic MZV} is uniform on $N$ in contrast to the classical setting, where the proofs of known results differ on $N$ as we have explained before (see for example \S \ref{sec: Deligne} and \cite{Bro12,Bro14,Del10} for more details). To do this we have to work with the whole tower of cyclotomic extensions of the function field $K$.

Perhaps more importantly, our proof is constructive. The proof gives an effective algorithm for expressing any cyclotomic MZV in the basis $\mathcal{CS}_{N,w}$ as opposed to the classical setting (see \cite{Bro12b,Del10,Del13} for more explanations).

Our main result generalizes several previous works of the authors in \cite{IKLNDP22,ND21}. However, compared to \cite{ND21} we still have to overcome several notable challenges, which we will highlight. First, the naive extension of \cite[\S 2--3]{ND21} does not give a sharp upper bound due to the presence of non-trivial characters. Thus, we must develop a refinement of \cite[\S 2--3]{ND21} that takes the characters into account. Second, the transcendental techniques developed in \cite[\S 4--6]{ND21} deal only with Anderson $t$-motives of level $1$ and do not work for arbitrary $N$. We use Harada's construction of higher level Anderson $t$-motives connected to cyclotomic MZV's, where the level is equal to the degree of the $N$th cyclotomic extension of $\Fq$. Thus we have to develop a higher level transcendental theory that generalizes \cite[\S 4--6]{ND21}. Finally, another original ingredient is that we use a descent argument on the tower of all cyclotomic extensions of $K$ (compared to \cite{Del10,Gla16} in the classical setting).

\subsubsection{Known results} \ppar
Below we present the list of all known cases where Zagier-Hoffman's conjecture
in positive characteristic for $N$th cyclotomic MZV's. As noted above, it suffices to prove Theorem \ref{thm: Hoffman cyclotomic MZV} as Theorem \ref{thm: Hoffman cyclotomic MZV} implies Theorem \ref{thm: Zagier cyclotomic MZV}.

\subsection*{$N=1$ (the case of MZV's)} \ppar
\begin{itemize}
\item Theorem \ref{thm: Zagier cyclotomic MZV} (resp. a variant of Theorem \ref{thm: Hoffman cyclotomic MZV}) was formulated by Todd \cite{Tod18} (resp. Thakur \cite{Tha17}). 

\item In \cite[Theorem A]{ND21} the fourth author proved an analogue of Brown's theorem and solved one half (the algebraic part) of these conjectures. He also proved the second half of Theorem \ref{thm: Hoffman cyclotomic MZV} for small weights $w \leq 2q-2$ (see \cite[Theorem D]{ND21}). Consequently, it remains to prove the second half of Theorem \ref{thm: Hoffman cyclotomic MZV} to get the complete theorem \ref{thm: Hoffman cyclotomic MZV} as well as Thakur's variant.

\item In \cite{IKLNDP22} the authors proved the other half (the transcendental part) of Theorem \ref{thm: Hoffman cyclotomic MZV} for all weights $w$ (see \cite[Theorem B]{IKLNDP22}). In \cite{CCM22} the other half of Theorem \ref{thm: Hoffman cyclotomic MZV} was proved using the same method, objects and proof strategy but their presentation is different. 
\end{itemize}

\subsection*{$N=q-1$ (the case of alternating MZV's)} \ppar
\begin{itemize}
\item In \cite{IKLNDP22} the authors proved Theorem \ref{thm: Hoffman cyclotomic MZV} for $N=q-1$ and all weights $w$ (see \cite[Theorem A]{IKLNDP22}). 
\end{itemize}

\subsection{Outline of the proof and plan of the paper} \ppar

Similar to the classical setting, the proof is divided into two parts with completely different flavours.

\subsubsection{Part A: an analogue of Brown-Deligne's theorems} \ppar \label{sec: Part A}

We want to prove an analogue of Brown-Deligne's theorems (Brown \cite{Bro12} for $N=1$ and Deligne \cite{Del10} for $N=2,3,4,8$) in our context. It says that the set $\mathcal{CS}_{N,w}$ spans the vector space $\mathcal{CZ}_{N,w}$. The techniques are a refinement of those developed in \cite[\S 2--3]{ND21}. The proof is divided into three steps:
\begin{itemize}
\item[{\bf (A1)}] First we briefly review the techniques introduced in \cite[\S 2 and \S 3]{ND21} and show that the vector space $\mathcal{CZ}_{N,w}$ is spanned by another set of $\mathcal{CT}_{N,w}$ of larger cardinality consisting of cyclotomic MZV's $\zeta_A \begin{pmatrix}
 \epsilon_1 & \dotsb & \epsilon_n \\
s_1 & \dotsb & s_n \end{pmatrix}$ such that $s_i \leq q$, $\epsilon_i \in \Gamma_N$ for all $i$ and $s_n<q$. Note, however, that $|\mathcal{CT}_{N,w}|$ is generally larger than $|\mathcal{CS}_{N,w}|$. So we have to do extra work to get the sharp upper bound.

\item[{\bf (A2)}] Second we again extend the techniques mentioned above and prove a similar result for the vector space $\mathcal{CL}_{N,w}$ spanned by Carlitz multiple polylogarithms (CMPL's for short) at roots of unity. Importantly, we use the key fact that the MZV's in  $\mathcal{CT}_{N,w}$ given in Step (A1) are CMPL's at roots of unity. Thus we discover a connection between cyclotomic MZV's and CMPL's at roots of unity in Theorem \ref{thm:bridge} which extends \cite[Proposition 1.9]{IKLNDP22}. 

\item[{\bf (A3)}] Last we exploit the fact that the product for CMPL's at roots of unity is ``simple'' and similar to the stuffle product for classical cyclotomic MZV's. Thus we are able to ``reduce'' the generating set from $\mathcal{CT}_{N,w}$ to the desired set $\mathcal{CS}_{N,w}$ and prove the desired analogue of Brown-Deligne's theorems in Theorem \ref{thm: strong BD MZV}. 
\end{itemize}

\subsubsection{Part B: dual $t$-motives connected to CMPL's at roots of unity} \ppar  \label{sec: Part B}

We want to prove (see Theorem \ref{thm: transcendence}) that the elements in $\mathcal{CS}_{N,w}$ are linearly independent over $K_N$. We follow the strategy developed in \cite[\S 4--6]{ND21} and \cite[\S 2--3]{IKLNDP22} which are based on the theory of Anderson $t$-motives introduced by Anderson \cite{And86} and the fruitful transcendental tool called the Anderson-Brownawell-Papanikolas criterion devised in \cite{ABP04}. However, carrying out this strategy for arbitrary $N$ turns out to be highly non-trivial due to the fact that the extension of $k_N/k$ is non-trivial in general. 

Below we outline the steps of the proof, explain the problems that we have to overcome and present our solutions. Recall that $R$ denotes the degree of the cyclotomic extension $k_N$ over $k$.
\begin{itemize}
\item[{\bf (B1)}] First we construct $t$-motives connected to CMPL's at $N$th roots of unity (resp. cyclotomic MZV's). 
\begin{itemize}
\item When $N=1$ in \cite{ND21} and $N=q-1$ in \cite{IKLNDP22}, the cyclotomic field $k_N$ is $k$, hence $R=1$. The construction of dual $t$-motives connected to CMPL's at $N$th roots of unity (or $N$th cyclotomic MZV's) is due to Anderson-Thakur in \cite{AT09}, Chang \cite{Cha14} and Harada \cite{Har21} using the Anderson-Thakur polynomials in \cite{AT90} and twisted $L$-series in \cite{AT09,Cha14}. 
\end{itemize}
For arbitrary $N$, the previous construction does not work. Harada \cite{Har23} solved this problem by considering an untwisted $L$-series (see \S \ref{sec:CMPL motives}). In {\it loc. cit.} he then constructed dual $t$-motives of level $R$ connected to CMPL's at $N$th roots of unity (resp. $N$th cyclotomic MZV's). The trade-off is that we have to deal with higher level and that the associated matrices are more complicated.

\item[{\bf (B2)}] Second, we must develop a higher transcendental theory that generalizes that of level $1$ in \cite[\S 4--6]{ND21} and \cite[\S 2--3]{IKLNDP22}. Then using Harada's idea, we succeed in constructing dual $t$ motives of level $R$ which lift linear relations among CMPL's at roots of unity in $\mathcal{CS}_{N,w}$ and the appropriate of Carlitz's period.

\item[{\bf (B3)}] Last we apply the Anderson-Brownawell-Papanikolas criterion to reduce the proof to the task of finding solutions to a system of $\sigma$-linear equations of order $R$ (higher order), which turns out to be the hardest task. Our solution is more complicated than the one in \cite{ND21,IKLNDP22}, which explains the long section \ref{sec:transcendental part}.  

Roughly speaking, one original ingredient is that we use the descent on the tower of all cyclotomic extensions of $K$ (compared to \cite{Del10,Gla16} in the classical setting). Another key point (see Proposition \ref{prop: solving first equations}) is to show that the system of $\sigma$-linear equations of order $R$ for which  we want to find solutions in $k_N[t]$ has at most one solution up to a constant in $k_N(t)$. Furthermore, if they admit one solution, then there exists a solution belonging to $k[t]$ and satisfying a system of $\sigma$-linear equations of order $1$. Finally, the latter system can be solved as in our previous work \cite{IKLNDP22} and we are done. We note that in the last step we have to use the analogue of Brown-Deligne's theorems for CMPL's proved in \S \ref{sec: Part A}.
\end{itemize}

\subsubsection{Plan of the paper} 
We will briefly explain the organization of the manuscript.
\begin{itemize}
\item In \S \ref{sec: MZV CMPL} we introduce the notation, recall the definition and basic properties of cyclotomic multiple zeta values and Carlitz multiple polylogarithms at roots of unity. We then extend the algebraic tools given in \cite[\S 2--3]{ND21} for these objects and obtain an analogue of Brown-Deligne's theorems in Theorem \ref{thm: strong BD MZV}. Steps {\bf (A1)} and {\bf (A2--A3)} are carried out in \S \ref{sec: cyclotomic MZV} and \S \ref{sec:BD CMPL}, respectively.

\item In \S \ref{sec: dual motives} and \S \ref{sec:transcendental part} we recall Harada's construction of dual $t$-motives of higher level connected to CMPL's at roots of unity. We then construct dual $t$-motives (also of higher level) related to linear combinations of such objects and Carlitz's period. We then state the linear independence of the chosen set in Theorem \ref{thm: transcendence}, the proof of which is given in \S \ref{sec:transcendental part}. Steps {\bf (B1), (B2)} and {\bf (B3)} are carried out in \S \ref{sec:CMPL motives}, \S \ref{sec: linear combination CMPL motives} and \S \ref{sec:transcendental part}, respectively.

\item Finally, in \S \ref{sec:applications} we prove the main results, i.e., Zagier-Hoffman's conjectures in positive characteristic for $N$th cyclotomic multiple zeta values for all $N$.
\end{itemize}

\subsection*{Acknowledgments} 
The first named author (B.-H. Im) was supported by the Basic Science Research Program through the National Research Foundation of Korea (NRF) grant funded by the Korea government (MSIT) NRF-2023R1A2C1002385. Two of the authors (KN. L. and T. ND.) were partially supported by the Excellence Research Chair ``$L$-functions in positive characteristic and applications'' funded by the Normandy Region. The fourth author (T. ND.) was partially supported by the ANR grant COLOSS ANR-19-CE40-0015-02. 


\section{Cyclotomic MZV's and CMPL's at roots of unity} \label{sec: MZV CMPL}

In this section we introduce the notion of MZV's and that of CMPL's at roots of unity. We then develop a refinement of \cite[\S 2--3]{ND21} that takes into account the $N$th cyclotomic character. The presentation closely follows that in \cite{IKLNDP22} where the case $N=q-1$ was treated.

\subsection{Notation} \label{sec: notation}

We recall some notation used in \cite{IKLNDP22,ND21}. 

\subsubsection{}

We recall the notation given in the Introduction. Let $q$ be a power of prime $p$ and $k=\Fq$ be a finite field of order $q$. We denote by $\bar k:=\overline{\mathbb F}_q$ an algebraic closure of $\Fq$. We recall that $A=\Fq[\theta]$ denotes the polynomial ring in $\theta$ over $\Fq$ and $A_+$ denotes the set of monic polynomials in $A$. Then $K=\Fq(\theta)$ is the fraction field of $A$ equipped with the rational point $\infty$. Let $K_\infty=k(\!(1/\theta)\!)$ be the completion of $K$ at $\infty$ and $\C_\infty$ be the completion of a fixed algebraic closure $\overline K$ of $K$ at $\infty$. Further, let $v_\infty$ be the discrete valuation on $K$ associated to the place $\infty$ normalized as $v_\infty(\theta) = -1$ and $|\cdot|_\infty = q^{-v_\infty(\cdot)}$ be the corresponding norm on $K$.

The following constant will play an important role in this paper:
\begin{equation} \label{eq: D_1}
D_1:= \theta^q - \theta \in A.
\end{equation}

\subsubsection{}

Letting $t$ be another independent variable, we denote by $\bT$ the Tate algebra in the variable $t$ with coefficients in $\C_\infty$ equipped with the Gauss norm $\lVert. \rVert_\infty$, and by $\bL$ the fraction field of $\bT$.

We denote $\mathcal E$ the ring of series $\sum_{n \geq 0} a_nt^n \in \overline K[[t]]$ such that $\lim_{n \to +\infty} \sqrt[n]{|a_n|_\infty}=0$ and $[K_\infty(a_0,a_1,\ldots):K_\infty]<\infty$. Then any $f \in \mathcal E$ is an entire function.

We denote by $\widetilde \pi$ the Carlitz period  which is a fundamental period of the Carlitz module. We fix a choice of $(q-1)$st root of $(-\theta)$ and set
	\[ \Omega(t):=(-\theta)^{-q/(q-1)} \prod_{i \geq 1} \left(1-\frac{t}{\theta^{q^i}} \right) \in \bT^\times \]
so that
	\[ \Omega^{(-1)}=(t-\theta)\Omega \quad \text{ and } \quad \frac{1}{\Omega(\theta)}=\widetilde \pi. \]

\subsubsection{}

Throughout this paper, we fix $N \in \mathbb N$. We denote by $k_N \subset \bar k$ the cyclotomic extension of $k=\Fq$ generated by a primitive $N$th root $\zeta_N$ of unity and put $R=[k_N:k]$. The group of $N$th roots of unity is denoted by $\Gamma_N$. We set $\gamma_N:=|\Gamma_N|=N'$ where $N=p^k N'$ with $k \geq 0$ and $N'$ prime to $p$. We put $A_N=k_N[\theta]$, $K_N = k_N(\theta)$ and $K_{N,\infty}:=k_N(\!(1/\theta)\!)$.

\subsubsection{}

Let $\fs=(s_1,\dots,s_n)$ be a tuple of positive integers. We put $w(\fs)=s_1+\dots+s_n$ and call it the weight of $\fs$. The number $n$ is called the depth of $\fs$.

Let $\fs=(s_1,\dots,s_r)$ and $\mathfrak t=(t_1,\dots,t_k)$ be tuples of positive integers. We say that $\fs \leq \mathfrak t$ if the following assertions hold:
\begin{itemize}
\item For all $i \in \N$ we have $s_1+\dots+s_i \leq t_1+\dots+t_i$ where we recall $s_i=0$ (resp. $t_i=0$) for $i$ bigger than the depth of $\fs$ (resp. $\mathfrak t$).

\item $\fs$ and $\mathfrak t$ have the same weight.
\end{itemize}

Letting $\fs=(s_1,s_2,\dots,s_r)  \in \N^r$ we set $\fs_-:=(s_2,\dots,s_r)$. For $i \in \N$ we define $T_i(\fs)$ to be the tuple $(s_1+\dots+s_i,s_{i+1},\dots,s_r)$. Note that $T_1(\fs)=\fs$. Further, for tuples of positive integers $\fs, \mathfrak t$ and for $i \in \mathbb N$, if $T_i(\fs) \leq T_i(\mathfrak t)$, then $T_k(\fs) \leq T_k(\mathfrak t)$ for all $k \geq i$.

Let $\fs=(s_1,\dots,s_n)$ be a tuple of positive integers. We denote by $0 \leq i \leq n$ the biggest integer such that $s_j \leq q$ for all $1 \leq j \leq i$ and define the initial tuple $\Init(\fs)$ of $\fs$ to be the tuple
	\[ \Init(\fs):=(s_1,\dots,s_i). \]
In particular, if $s_1>q$, then $i=0$ and $\Init(\fs)$ is the empty tuple.

For two different tuples $\fs$ and $\mathfrak t$, we consider the lexicographical order for initial tuples and write $\Init(\mathfrak t) \preceq \Init(\fs)$ (resp. $\Init(\mathfrak t) \prec \Init(\fs)$, $\Init(\mathfrak t) \succeq \Init(\fs)$ and $\Init(\mathfrak t) \succ \Init(\fs)$).

\subsubsection{}

Letting $\fs = (s_1, \dotsc, s_n) \in \mathbb{N}^{n}$ and $\fe = (\epsilon_1, \dotsc, \epsilon_n) \in (\Gamma_N)^{n}$, we set $\fs_- := (s_2, \dotsc, s_n)$ and $\fe_- := (\epsilon_2, \dotsc, \epsilon_n) $. A positive array $\begin{pmatrix}
 \fe  \\
\fs  \end{pmatrix}$ is an array of the form 
\[
\begin{pmatrix}
 \fe  \\
\fs  \end{pmatrix}  = \begin{pmatrix}
 \epsilon_1 & \dotsb & \epsilon_n \\
s_1 & \dotsb & s_n \end{pmatrix}.
\]
We set $s_i = 0$ and $\epsilon_i = 1$ for all $i > \depth(\fs)$. Further we recall $w(\fs)=s_1+\dots+s_n$ and put $\chi(\fe) = \epsilon_1  \dots  \epsilon_n$.

Let $\begin{pmatrix}
 \fve  \\
\fs  \end{pmatrix}$, $\begin{pmatrix}
 \fe  \\
\mathfrak{t}  \end{pmatrix}$ be two positive arrays. We define 
$\begin{pmatrix}
 \fve  \\
\fs  \end{pmatrix} + \begin{pmatrix}
 \fe  \\
\mathfrak{t}  \end{pmatrix} := \begin{pmatrix}
 \fve \fe  \\
\fs + \mathfrak{t}  \end{pmatrix}$. We say that $\begin{pmatrix}
 \fve  \\
\fs  \end{pmatrix} \leq \begin{pmatrix}
 \fe  \\
\mathfrak{t}  \end{pmatrix}$
 if the following conditions are satisfied:
\begin{itemize}
    \item $w(\fs) = w(\mathfrak{t})$,
    \item $s_1 + \dotsb + s_i \leq t_1 + \dotsb + t_i$ for all $i \in \N$,
    \item $\chi(\fve) = \chi(\fe)$.
\end{itemize}

\subsection{Multiple zeta values in positive characteristic} \ppar \label{sec: cyclotomic MZV}

For $d \in \mathbb{Z}$ and for $\fs=(s_1,\dots,s_n) \in \N^n$, we recall that $S_d(\fs)$ and $S_{<d}(\fs)$ are given by
\begin{align*}
	S_d(\fs) &= \sum\limits_{\substack{a_1, \dots, a_n \in A_{+} \\ d = \deg a_1> \dots > \deg a_n\geq 0}} \dfrac{1}{a_1^{s_1} \dots a_n^{s_n}} \in K,\\
    S_{<d} (\fs) &= \sum\limits_{\substack{a_1, \dots, a_n \in A_{+} \\ d > \deg a_1> \dots > \deg a_n\geq 0}} \dfrac{1}{a_1^{s_1} \dots a_n^{s_n}} \in K.
\end{align*}

Further letting $\begin{pmatrix}
 \fe  \\
\fs  \end{pmatrix}  =  \begin{pmatrix}
 \epsilon_1 & \dots & \epsilon_n \\
s_1 & \dots & s_n \end{pmatrix}$ be a positive array, we put
\begin{align*}
	S_d \begin{pmatrix}
\fe \\ \fs
\end{pmatrix}  &= \sum\limits_{\substack{a_1, \dots, a_n \in A_{+} \\ d = \deg a_1> \dots > \deg a_n\geq 0}} \dfrac{\epsilon_1^{\deg a_1} \dots \epsilon_n^{\deg a_n }}{a_1^{s_1} \dots a_n^{s_n}} \in K_N, \\
    S_{<d} \begin{pmatrix}
 \fe  \\
\fs  \end{pmatrix}  &= \sum\limits_{\substack{a_1, \dots, a_n \in A_{+} \\ d > \deg a_1> \dots > \deg a_n\geq 0}} \dfrac{\epsilon_1^{\deg a_1} \dots \epsilon_n^{\deg a_n }}{a_1^{s_1} \dots a_n^{s_n}} \in K_N.
\end{align*}

Following Harada \cite{Har23}, we define the cyclotomic MZV by
\begin{equation*}
    \zeta_A \begin{pmatrix}
 \fe  \\
\fs  \end{pmatrix}  = \sum \limits_{d \geq 0} S_d \begin{pmatrix}
 \fe  \\
\fs  \end{pmatrix}  = \sum\limits_{\substack{a_1, \dots, a_n \in A_{+} \\ \deg a_1> \dots > \deg a_n\geq 0}} \dfrac{\epsilon_1^{\deg a_1} \dots \epsilon_n^{\deg a_n }}{a_1^{s_1} \dots a_n^{s_n}}  \in K_{N,\infty}.
\end{equation*}

\begin{remark}
In \cite{Har23} these objects are called colored multiple zeta values. We note that in the classical setting similar objects have several names in the literature: cyclotomic multiple zeta values by Brown in \cite{Bro14}, multiple zeta values relative to $\mu_N$ by Deligne in \cite{Del10}, or colored multiple zeta values by Zhao in \cite{Zha16}.
\end{remark}

Using Chen's formula in \cite{Che15}, Harada \cite{Har23} proved that for $s, t \in \mathbb{N}$ and $\varepsilon, \epsilon \in \Gamma_N$, 
\begin{equation} \label{eq: Chen}
S_d \begin{pmatrix}
 \varepsilon \\
s  \end{pmatrix} S_d \begin{pmatrix}
 \epsilon  \\
t  \end{pmatrix}  = S_d \begin{pmatrix}
 \varepsilon\epsilon  \\
s+t  \end{pmatrix}  + \sum \limits_i \Delta^i_{s,t} S_d \begin{pmatrix}
 \varepsilon\epsilon  & 1 \\
s+t-i & i \end{pmatrix},
\end{equation}
where
\begin{equation*} 
    \Delta^i_{s,t} = \begin{cases}
			(-1)^{s-1} {i - 1  \choose s - 1} +  (-1)^{t-1} {i-1 \choose t-1} & \quad  \text{if } q - 1 \mid i \text{ and } 0 < i < s + t, \\
            0 & \quad \text{otherwise.}
		 \end{cases}
\end{equation*}
We mention that when $s+t \leq q$, all the coefficients $\Delta^i_{s,t}$ are zero. 

Harada then proved similar results for products of cyclotomic MZV's (see \cite{Har23}):
\begin{proposition} \label{sums}
Let $ \begin{pmatrix}
 \fve  \\
\fs  \end{pmatrix}$, $ \begin{pmatrix}
 \fe  \\
\mathfrak{t}  \end{pmatrix}$ be two positive arrays. Then

1) There exist $f_i \in \mathbb{F}_q$ and positive arrays $ \begin{pmatrix}
 \fm_i  \\
\mathfrak{u}_i  \end{pmatrix}$ with $ \begin{pmatrix}
 \fm_i  \\
\mathfrak{u}_i  \end{pmatrix}  \leq  \begin{pmatrix}
 \fve  \\
\fs  \end{pmatrix}  +  \begin{pmatrix}
 \fe  \\
\mathfrak{t}  \end{pmatrix}  $ and $\depth(\mathfrak{u}_i) \leq \depth(\fs) + \depth(\mathfrak{t})$ for all $i$  such that
    \begin{equation*}
        S_d \begin{pmatrix}
 \fve  \\
\fs  \end{pmatrix} S_d \begin{pmatrix}
 \fe  \\
\mathfrak{t}  \end{pmatrix}  = \sum \limits_i f_i S_d \begin{pmatrix}
 \fm_i  \\
\mathfrak{u}_i  \end{pmatrix}  \quad \text{for all } d \in \mathbb{Z}.
    \end{equation*}

2) There exist $f'_i \in \mathbb{F}_q$ and positive arrays $ \begin{pmatrix}
 \fm'_i  \\
\mathfrak{u}'_i  \end{pmatrix}$ with $ \begin{pmatrix}
 \fm'_i  \\
\mathfrak{u}'_i  \end{pmatrix}  \leq  \begin{pmatrix}
 \fve  \\
\fs  \end{pmatrix}  +  \begin{pmatrix}
 \fe  \\
\mathfrak{t}  \end{pmatrix}  $ and $\depth(\mathfrak{u}'_i) \leq \depth(\fs) + \depth(\mathfrak{t})$ for all $i$  such that
    \begin{equation*}
        S_{<d} \begin{pmatrix}
 \fve  \\
\fs  \end{pmatrix} S_{<d} \begin{pmatrix}
 \fe  \\
\mathfrak{t}  \end{pmatrix}  = \sum \limits_i f'_i S_{<d} \begin{pmatrix}
 \fm'_i  \\
\mathfrak{u}'_i  \end{pmatrix}  \quad \text{for all } d \in \mathbb{Z}.
    \end{equation*}

3) There exist $f''_i \in \mathbb{F}_q$ and positive arrays $ \begin{pmatrix}
 \fm''_i  \\
\mathfrak{u}''_i  \end{pmatrix}$ with $ \begin{pmatrix}
 \fm''_i  \\
\mathfrak{u}''_i  \end{pmatrix}  \leq  \begin{pmatrix}
 \fve  \\
\fs  \end{pmatrix}  +  \begin{pmatrix}
 \fe  \\
\mathfrak{t}  \end{pmatrix}  $ and $\depth(\mathfrak{u}''_i) \leq \depth(\fs) + \depth(\mathfrak{t})$ for all $i$  such that
    \begin{equation*}
        S_d \begin{pmatrix}
 \fve  \\
\fs  \end{pmatrix} S_{<d} \begin{pmatrix}
 \fe  \\
\mathfrak{t}  \end{pmatrix}  = \sum \limits_i f''_i S_d \begin{pmatrix}
 \fm''_i  \\
\mathfrak{u}''_i  \end{pmatrix}  \quad \text{for all } d \in \mathbb{Z}.
    \end{equation*}
\end{proposition}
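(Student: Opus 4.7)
The plan is to prove all three parts simultaneously by induction on the total depth $n := \depth(\fs) + \depth(\mathfrak t)$, following the classical Chen-type argument for sum-shuffle relations, adapted to the cyclotomic setting.

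For the base case $n=2$ (depth-$1$ arrays), part 1) is precisely Chen's formula \eqref{eq: Chen}, whose output terms manifestly satisfy the three side conditions: the weight is preserved, the depth is at most $2$, the character is $\varepsilon\epsilon$, and the first partial sum $s+t-i$ of every secondary term is $\leq s+t$. For part 3), the identity
\[
    S_d\begin{pmatrix}\varepsilon \\ s\end{pmatrix} S_{<d}\begin{pmatrix}\epsilon \\ t\end{pmatrix} = S_d\begin{pmatrix}\varepsilon & \epsilon \\ s & t\end{pmatrix}
\]
is immediate from the definitions. For part 2), I split the region $\{(e_1,e_2) : e_1,e_2<d\}$ according to whether $e_1>e_2$, $e_1<e_2$, or $e_1=e_2$; the two off-diagonal pieces produce the nested depth-$2$ summands $S_{<d}\begin{pmatrix}\varepsilon & \epsilon \\ s & t\end{pmatrix}$ and $S_{<d}\begin{pmatrix}\epsilon & \varepsilon \\ t & s\end{pmatrix}$, while the diagonal $\sum_{e<d} S_e\begin{pmatrix}\varepsilon \\ s\end{pmatrix} S_e\begin{pmatrix}\epsilon \\ t\end{pmatrix}$ is handled termwise by Chen's formula.

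For the inductive step, the key tools are the factorizations
\[
    S_d\begin{pmatrix}\fve \\ \fs\end{pmatrix} = S_d\begin{pmatrix}\varepsilon_1 \\ s_1\end{pmatrix} S_{<d}\begin{pmatrix}\fve_- \\ \fs_-\end{pmatrix}, \qquad S_{<d}\begin{pmatrix}\fve \\ \fs\end{pmatrix} = \sum_{e<d} S_e\begin{pmatrix}\varepsilon_1 \\ s_1\end{pmatrix} S_{<e}\begin{pmatrix}\fve_- \\ \fs_-\end{pmatrix}.
\]
For part 1), one regroups
\[
    S_d\begin{pmatrix}\fve \\ \fs\end{pmatrix} S_d\begin{pmatrix}\fe \\ \mathfrak t\end{pmatrix} = \Bigl(S_d\begin{pmatrix}\varepsilon_1 \\ s_1\end{pmatrix} S_d\begin{pmatrix}\epsilon_1 \\ t_1\end{pmatrix}\Bigr) \Bigl(S_{<d}\begin{pmatrix}\fve_- \\ \fs_-\end{pmatrix} S_{<d}\begin{pmatrix}\fe_- \\ \mathfrak t_-\end{pmatrix}\Bigr),
\]
expands the first factor by Chen's formula, the second by the inductive hypothesis for part 2), and then collapses each resulting product $S_d(\cdot)S_{<d}(\cdot)$ by the inductive hypothesis for part 3). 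Parts 2) and 3) proceed by completely analogous manipulations, peeling off the leading entries of both arrays.

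The bulk of the work lies not in the identities but in the combinatorial bookkeeping of the three side conditions on the output arrays. The character condition $\chi(\fm_i) = \chi(\fve)\chi(\fe)$ is preserved because Chen's formula multiplies the characters of the two merged entries and concatenation multiplies characters entry-wise. The depth bound $\depth(\mathfrak u_i) \leq \depth(\fs)+\depth(\mathfrak t)$ is preserved because each application of Chen's formula consumes two entries and produces at most two, while concatenation adds depths exactly. The most delicate point, and the main obstacle, is the partial-sum inequality $\begin{pmatrix}\fm_i \\ \mathfrak u_i\end{pmatrix} \leq \begin{pmatrix}\fve \\ \fs\end{pmatrix} + \begin{pmatrix}\fe \\ \mathfrak t\end{pmatrix}$: it survives because concatenation shifts later partial sums by a common prefix weight and Chen's formula strictly decreases the leading partial sum while leaving subsequent ones unchanged. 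Carefully chaining these two observations through the inductive step, and verifying at each stage that the resulting arrays $\mathfrak u_i$ admit the required comparison with $\fs + \mathfrak t$, is the technical heart of the argument.
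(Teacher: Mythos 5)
The paper does not prove Proposition \ref{sums}; it attributes it to Harada \cite{Har23}, and the companion Proposition \ref{polysums} for CMPL's is similarly deferred to \cite[Proposition 2.1]{ND21}. Your approach---Chen's formula as the depth-one engine, then induction on $\depth(\fs)+\depth(\mathfrak t)$, peeling leading entries via $S_d\begin{pmatrix}\fve\\\fs\end{pmatrix}=S_d\begin{pmatrix}\varepsilon_1\\s_1\end{pmatrix}S_{<d}\begin{pmatrix}\fve_-\\\fs_-\end{pmatrix}$---is exactly the standard Thakur--Chen--Harada sum-shuffle argument those references carry out, and your bookkeeping of the three side conditions (weight, depth bound, partial sums, characters) is correct. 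In particular, your observation that Chen's depth-$2$ outputs weaken only the leading partial sum and that concatenation shifts later partial sums by a common prefix weight is precisely the right reason the comparison $\begin{pmatrix}\fm_i\\\mathfrak u_i\end{pmatrix}\leq\begin{pmatrix}\fve\\\fs\end{pmatrix}+\begin{pmatrix}\fe\\\mathfrak t\end{pmatrix}$ propagates.

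One point of precision you should repair: as written, the dependency structure of the inductive step for part 1) is subtly circular. After you expand the leading pair by Chen and the tails by part 2) at total depth $n-2$, the resulting products $S_d(\cdot)\,S_{<d}(\cdot)$ may have total depth equal to $n$ (Chen can output a depth-$2$ array), whereas ``the inductive hypothesis for part 3)'' only covers total depth $<n$. Two equivalent fixes, both of which you have the tools for: (a) within each total depth $n$ establish the parts in the order $3,1,2$---part 3) at depth $n$ needs only part 2) at depth $n-1$, because the $S_d$ factor is depth one and concatenates freely; then part 1) can invoke part 3) at depth $n$, and part 2) can invoke parts 1) and 3) at depth $n$; or (b) in the part-1) collapse, use that the $S_d$ factor coming out of Chen has depth at most $2$, peel its leading entry, apply part 2) at total depth $\le n-1$, and concatenate. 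Either way the argument closes; you should just say which you are doing, since ``simultaneous induction with the inductive hypothesis for part 3)'' is not literally available at the depth where you invoke it.
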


We denote by $\mathcal{CZ}_N$ the $K_N$-vector space spanned by the cyclotomic MZV's and $\mathcal{CZ}_{N,w}$ the $K_N$-vector space spanned by the cyclotomic MZV's of weight $w$. When $N=1$ we use $\mathcal Z$ (resp.  $\mathcal Z_w$) instead of $\mathcal{CZ}_1$ (resp. $\mathcal{CZ}_{1,w}$). 

Proposition~\ref{sums} easily implies that $\mathcal{CZ}_N$ is a commutative algebra over $K_N$. However, we mention that the associativity of $\mathcal Z$ was not known in \cite{ND21} (see Remark 2.2 of {\it loc. cit.}). It was recently proved in \cite[Theorem A]{IKLNDP23}. As a direct consequence, we see that $\mathcal{CZ}_N$ is also associative for all $N$. 

We extend without difficulty the algebraic theory developed in \cite[\S 2 and \S 3]{ND21} to the setting of cyclotomic MZV's to obtain a weak analogue of Brown-Deligne's theorems for cyclotomic MZV's (see Theorem \ref{theorem: weak BD cyclotomic MZV}). We refer the reader to \S \ref{sec: CMPL} where we give more details of this theory in a similar setting.

\fantome{

\subsection{Weak version of Brown-Deligne's theorems for cyclotomic MZV's} \label{sec:weak BD cyclotomic MZV} \ppar

We extend the so-called algebraic theory as in \cite[\S 2 and \S 3]{ND21} to the setting of cyclotomic MZV's. We briefly recall the notion of binary relations and several operators on these relations and then derive an analogue of Brown-Deligne's theorems for cyclotomic MZV's (see Theorem \ref{theorem: weak BD cyclotomic MZV}). 

\subsubsection{Binary relations} 

A binary relation is a $K_N$-linear combination of the form
\begin{equation*}
    \sum \limits_i a_i S_d \begin{pmatrix}
 \fve_i  \\
\fs_i  \end{pmatrix}  + \sum \limits_i b_i S_{d+1} \begin{pmatrix}
 \fe_i  \\
\mathfrak{t}_i  \end{pmatrix}  =0 \quad \text{for all } d \in \mathbb{Z},
\end{equation*}
where $a_i,b_i \in K_N$ and $ \begin{pmatrix}
 \fve_i  \\
\fs_i  \end{pmatrix},  \begin{pmatrix}
 \fe_i  \\
\mathfrak{t}_i  \end{pmatrix}$ are positive arrays of the same weight.

Letting $R$ be a binary relation as above, taking the sum over $d \in \mathbb Z$ yields immediately a $K_N$-linear relation
\begin{equation*}
\sum \limits_i a_i \zeta_A \begin{pmatrix}
 \fve_i  \\
\fs_i  \end{pmatrix}  + \sum \limits_i b_i \zeta_A \begin{pmatrix}
 \fe_i  \\
\mathfrak{t}_i  \end{pmatrix}  =0.
\end{equation*}

We denote by $\mathfrak{BR}_{w}$ the $K_N$-vector space of all binary relations of weight $w$. From the fundamental relation in \cite[\S 3.4.6]{Tha09} follows an important family of binary relations
\begin{equation*}
 R_{\epsilon} \colon \quad  S_d \begin{pmatrix}
 \epsilon\\
q  \end{pmatrix}  + \epsilon^{-1}D_1 S_{d+1} \begin{pmatrix}
 \epsilon& 1 \\
1 & q-1  \end{pmatrix}  =0.
\end{equation*}
Here $D_1 = \theta^q - \theta$ given in Eq. \eqref{eq: D_1} and $\epsilon \in \Gamma_N$. 

For later definitions, let $R \in \mathfrak{BR}_w$ be a binary relation of the form
\begin{equation}   \label{eq: binary relation}
    R(d) \colon \quad \sum \limits_i a_i S_d \begin{pmatrix}
 \fve_i  \\
\fs_i  \end{pmatrix}  + \sum \limits_i b_i S_{d+1} \begin{pmatrix}
 \fe_i  \\
\mathfrak{t}_i  \end{pmatrix}  =0,
\end{equation}
where $a_i,b_i \in K_N$ and $ \begin{pmatrix}
 \fve_i  \\
\fs_i  \end{pmatrix},  \begin{pmatrix}
 \fe_i  \\
\mathfrak{t}_i  \end{pmatrix}$ are positive arrays of the same weight. 

We now extend operators $\mathcal B^*$ and $\mathcal C$ of Todd \cite{Tod18} and the operator $\mathcal{BC}$ of Ngo Dac  \cite{ND21} to the setting of cyclotomic MZV's.

\subsubsection{Operators $\mathcal B^*$} 

Let $ \begin{pmatrix}
 \sigma  \\
v  \end{pmatrix}$ be a positive array of depth $1$. We define an operator
\begin{equation*}
    \mathcal B^*_{\sigma,v} \colon \mathfrak{BR}_{w} \longrightarrow \mathfrak{BR}_{w+v}
\end{equation*}
as follows: for $R \in \mathfrak{BR}_{w}$,
the image $\mathcal B^*_{\sigma,v}(R) = S_d \begin{pmatrix}
 \sigma  \\
v  \end{pmatrix} \sum_{j < d} R(j)$ is a binary relation of the form
\begin{align*}
    0 &= S_d \begin{pmatrix}
 \sigma  \\
v  \end{pmatrix}   \left(\sum \limits_ia_i S_{<d} \begin{pmatrix}
 \fve_i  \\
\fs_i  \end{pmatrix}  + \sum \limits_i  b_i S_{<d+1} \begin{pmatrix}
 \fe_i  \\
\mathfrak{t}_i  \end{pmatrix} \right)  \\
    &= \sum \limits_i a_i S_d \begin{pmatrix}
 \sigma  \\
v  \end{pmatrix} S_{<d} \begin{pmatrix}
 \fve_i  \\
\fs_i  \end{pmatrix}  + \sum \limits_i  b_i S_d \begin{pmatrix}
 \sigma  \\
v  \end{pmatrix}  S_{<d} \begin{pmatrix}
 \fe_i  \\
\mathfrak{t}_i  \end{pmatrix}  + \sum \limits_i  b_i S_d \begin{pmatrix}
 \sigma  \\
v  \end{pmatrix}  S_{d} \begin{pmatrix}
 \fe_i  \\
\mathfrak{t}_i  \end{pmatrix}.
\end{align*}
This is a binary relation by Proposition \ref{sums}.

Let $ \begin{pmatrix}
 \Sigma  \\
V  \end{pmatrix}  =  \begin{pmatrix}
 \sigma_1 & \dots & \sigma_n \\
v_1 & \dots & v_n \end{pmatrix}$ be a positive array. We define $\mathcal{B}^*_{\Sigma,V}(R) $ by
\begin{equation*}
    \mathcal B^*_{\Sigma,V}(R) = \mathcal B^*_{\sigma_1,v_1} \circ \dots \circ \mathcal B^*_{\sigma_n,v_n}(R).
\end{equation*}

\subsubsection{Operators $\mathcal C$} 

Let $ \begin{pmatrix}
 \Sigma  \\
V  \end{pmatrix}$ be a positive array of weight $v$. We define an operator
\begin{equation*}
     \mathcal C_{\Sigma,V}(R) \colon \mathfrak{BR}_{w} \longrightarrow \mathfrak{BR}_{w+v}
\end{equation*}
as follows: for $R \in \mathfrak{BR}_{w}$,
the image $\mathcal C_{\Sigma,V}(R) = R(d) S_{<d+1} \begin{pmatrix}
 \Sigma  \\
V  \end{pmatrix}$ is a binary relation of the form
\begin{align*}
    0 &= \left( \sum \limits_i a_i S_d \begin{pmatrix}
 \fve_i  \\
\fs_i  \end{pmatrix}  + \sum \limits_i b_i S_{d+1} \begin{pmatrix}
 \fe_i  \\
\mathfrak{t}_i  \end{pmatrix} \right) S_{<d+1} \begin{pmatrix}
 \Sigma  \\
V  \end{pmatrix}   \\
    &= \sum \limits_i a_i S_d \begin{pmatrix}
 \fve_i  \\
\fs_i  \end{pmatrix} S_{d} \begin{pmatrix}
 \Sigma  \\
V  \end{pmatrix}  + \sum \limits_i a_i S_d \begin{pmatrix}
 \fve_i  \\
\fs_i  \end{pmatrix} S_{<d} \begin{pmatrix}
 \Sigma  \\
V  \end{pmatrix}  + \sum \limits_i b_i S_{d+1} \begin{pmatrix}
 \fe_i  \\
\mathfrak{t}_i  \end{pmatrix} S_{<d+1} \begin{pmatrix}
 \Sigma  \\
V  \end{pmatrix}.
\end{align*}
Proposition \ref{sums} implies that this is a binary relation.

\subsubsection{Operators $\mathcal{BC}$} 
Let $\epsilon \in \Gamma_N$. We define an operator
\begin{equation*}
   \mathcal{BC}_{\epsilon,q} \colon \mathfrak{BR}_{w} \longrightarrow \mathfrak{BR}_{w+q}
\end{equation*}
as follows: for $R \in \mathfrak{BR}_{w}$ as in Eq. \eqref{eq: binary relation},
the image $\mathcal{BC}_{\epsilon,q}(R)$ is a binary relation given by
\begin{align*}
    \mathcal{BC}_{\epsilon,q}(R) = \mathcal B^*_{\epsilon,q}(R) - \sum\limits_i b_i \mathcal C_{\fe_i,\mathfrak{t}_i} (R_{\epsilon}).
\end{align*}
By direct calculations, we see that
$\mathcal{BC}_{\epsilon,q}(R)$ is given by
\begin{equation*}
   \sum \limits_i a_i S_d \begin{pmatrix}
\epsilon& \fve_i  \\
q& \fs_i  \end{pmatrix}  + \epsilon^{-1}D_1 S_{d+1} \begin{pmatrix}
 \epsilon \\
1  \end{pmatrix} S_{<d+1} \begin{pmatrix}
1  \\
q-1  \end{pmatrix} S_{<d+1} \begin{pmatrix}
\fe_i  \\
\mathfrak{t}_i  \end{pmatrix} = 0.
\end{equation*}

\subsubsection{A weak version of Brown-Deligne's theorems for cyclotomic MZV's} 

Using the operators $\mathcal B^*$, $\mathcal C$ and $\mathcal{BC}$ we can develop an algebraic theory for cyclotomic MZV's which follows the same line as that in \cite[\S 2 and \S 3]{ND21}. Thus we obtain a weak version of Brown-Deligne's theorems for cyclotomic MZV's.

}

\begin{theorem} \label{theorem: weak BD cyclotomic MZV}
We denote by $\mathcal{CT}_{N,w}$ the set of all cyclotomic MZV's $\zeta_A \begin{pmatrix}
 \epsilon_1 & \dots & \epsilon_n \\
s_1 & \dots & s_n \end{pmatrix}$ of weight $w$ such that $s_1, \dots, s_{n-1} \leq q$ and $s_n < q$. 
Then 

1) For all positive arrays $ \begin{pmatrix}
 \fve  \\
\frak t  \end{pmatrix}$ of weight $w$, $\zeta_A \begin{pmatrix}
 \fve  \\
\frak t  \end{pmatrix}$ can be expressed as an $A_N$-linear combination of $\zeta_A \begin{pmatrix}
 \fe  \\
\fs  \end{pmatrix}$'s in $\mathcal{CT}_{N,w}$. 

2) The set of all elements $\zeta_A \begin{pmatrix}
 \fe  \\
\fs  \end{pmatrix}$ such that $\zeta_A \begin{pmatrix}
 \fe  \\
\fs  \end{pmatrix}  \in \mathcal{CT}_{N,w}$ forms a set of generators for $\mathcal{CZ}_{N,w}$.
\end{theorem}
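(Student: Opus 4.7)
The plan is to adapt the algebraic machinery of \cite[\S 2--3]{ND21} by carefully tracking the cyclotomic character along the way, then argue by induction on a suitable order on positive arrays. Since the construction of binary relations and the operators $\mathcal B^*$, $\mathcal C$, $\mathcal{BC}$ sketched in the Introduction are already available (one only has to verify compatibility with $\chi$), the heart of the proof is a reduction argument for a single cyclotomic MZV.

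First I would put a total order on positive arrays of fixed weight $w$ by comparing initial tuples lexicographically (with empty tuple largest) and breaking ties by depth or lexicographic order on the remaining entries; crucially any element of $\mathcal{CT}_{N,w}$ has the \emph{largest} possible initial tuple, namely the whole array. Given a positive array $\begin{pmatrix}\fve\\\mathfrak t\end{pmatrix}$ not in $\mathcal{CT}_{N,w}$, let $i$ be the smallest index with $t_i>q$, unless $\mathfrak t$ ends with $t_n=q$, in which case we set $i=n$. I would then produce, by applying the operators to the fundamental relation $R_{\varepsilon_i}$, a binary relation whose leading term (largest under $\preceq$) is exactly $\zeta_A\begin{pmatrix}\fve\\\mathfrak t\end{pmatrix}$ and all other terms $\zeta_A\begin{pmatrix}\fe'\\\fs'\end{pmatrix}$ satisfy either $\operatorname{Init}(\fs')\succ\operatorname{Init}(\mathfrak t)$ (with the same weight) or they are coefficients times cyclotomic MZV's of strictly smaller weight (coming from the $D_1$ factor and from sum-shuffle collisions in Proposition~\ref{sums}). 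By Proposition~\ref{sums} all resulting arrays $\fs'$ satisfy $\fs'\le$ some combination of the original entries, so the weight is controlled, and the character-condition $\chi(\fe')=\chi(\fve\cdot\text{char of }R_{\varepsilon_i})$ is automatic because every reduction step identically respects multiplication of characters.

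The concrete mechanism is as follows. If $i<n$ (so $t_i>q$), write $t_i=s+(t_i-s)$ for some $1\le s<q$ and apply $\mathcal{BC}_{\varepsilon_i,q}$ to a binary relation of weight $w-q$ (which exists by the inductive hypothesis on weight) whose shape matches the piece $(t_{i+1},\dots,t_n)$; this peels off one power of $q$ from $t_i$ and produces instead an entry of size $q$ at position $i$, plus lower-order terms. If $i=n$ and $t_n=q$, a single application of $\mathcal C_{\fve_-,\mathfrak t_-}(R_{\varepsilon_n})$ (after suitably expressing $S_d\begin{pmatrix}\fve_-\\ \mathfrak t_-\end{pmatrix}$ in truncated form, using the shuffle formulas of Proposition~\ref{sums}) reduces $t_n=q$ to a tail $(1,q-1)$, with all collateral terms strictly larger in $\preceq$. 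Summing over $d\in\mathbb Z$ then yields an identity of cyclotomic MZV's with $A_N$-coefficients (the $D_1$ is the only coefficient not in $\mathbb F_{q}$). Iterating this procedure on the leading term, the induction on $\preceq$ on arrays of the same weight combined with the induction on weight terminates, producing an expression of $\zeta_A\begin{pmatrix}\fve\\\mathfrak t\end{pmatrix}$ as an $A_N$-linear combination of elements of $\mathcal{CT}_{N,w}$. Part~2) then follows immediately from part~1) together with the definition of $\mathcal{CZ}_{N,w}$.

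The main obstacle I anticipate is bookkeeping: one must verify that the three operators indeed respect the character $\chi$ and that the ``leading term'' of the relation produced at each step is genuinely $\zeta_A\begin{pmatrix}\fve\\ \mathfrak t\end{pmatrix}$ under $\preceq$, while all other terms either have larger initial tuple or strictly smaller weight. This is the step where the presence of the non-trivial character $\fve$ could in principle introduce spurious terms with different characters; the check that the combinatorics of Proposition~\ref{sums} (especially the factor $\chi(\fve_i)\chi(\fe_i)$ arising in $S_dS_d$ products) is compatible with the ordering is essentially automatic because $\chi$ is multiplicative, but setting up the induction so that this compatibility is transparent is the principal bit of work.
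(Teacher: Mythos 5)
Your overall strategy — extend the operator machinery of \cite[\S 2--3]{ND21} ($\mathcal B^*$, $\mathcal C$, $\mathcal{BC}$) to track the character $\chi$, apply these operators to the relations $R_\varepsilon$, and conclude by a well-founded induction on a combinatorial order on positive arrays — is exactly what the paper does (the paper actually proves the parallel statement for CMPL's at roots of unity via Proposition \ref{polydecom} and refers to it for the cyclotomic MZV case). That part is fine.

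However, your termination argument contains a concrete error. You write that the collateral terms ``are coefficients times cyclotomic MZV's of strictly smaller weight (coming from the $D_1$ factor and from sum-shuffle collisions in Proposition~\ref{sums}).'' This is false: $D_1 = \theta^q - \theta$ is an element of $A_N$, so multiplying by it is merely picking up a polynomial \emph{coefficient}; it does not change the weight of the MZV. Likewise, Proposition \ref{sums} produces positive arrays $\begin{pmatrix}\fm_i\\\mathfrak u_i\end{pmatrix} \le \begin{pmatrix}\fve\\\fs\end{pmatrix} + \begin{pmatrix}\fe\\\mathfrak t\end{pmatrix}$, which by definition have the \emph{same} total weight, not smaller. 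So there is never any ``weight drop'' that would terminate an induction on weight. Every term produced by the reduction step, including those with a $D_1$ in front, has weight $w$, and your induction on weight has no decreasing quantity attached to it. Since you advertise this weight drop as half of your termination mechanism, this is a genuine gap, not just bookkeeping.

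The correct termination argument, as in \cite[\S 2--3]{ND21} and Proposition \ref{polydecom}, relies on \emph{two} combinatorial quantities, both at fixed weight $w$: the initial tuple $\Init(\fs)$ with the lexicographic ordering $\preceq$ (which strictly increases for the type-3 terms), and the partial sums $T_k$ past position $k$ (which strictly decrease for the type-1 and type-2 terms). The type-1 term is the distinguished one with unit coefficient; iterating on it the initial tuple strictly increases, and because initial tuples of fixed weight with entries $\le q$ form a finite totally ordered set, that chain terminates in an element of $\mathcal{T}_w$. Types 2 and 3, which carry a $D_1$-divisible coefficient, are handled by the auxiliary inductions on $T_k$ and on $\Init$; the coefficient stays in $A_N$ throughout. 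To fix your proposal you should drop the ``lower weight'' heuristic entirely and replace it by this nested induction on $(\Init, T_k)$. As a minor side note, the split in the type-1 step when $t_i>q$ is $t_i = q + (t_i-q)$, not $t_i = s + (t_i-s)$ with $1\le s<q$; the fundamental relation $R_\varepsilon$ only ever peels off exactly $q$.
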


\begin{remark} \label{rem: Brown cyclotomic MZV}
1) Theorem \ref{theorem: weak BD cyclotomic MZV}, Part 2 can be seen as an analogue (weak version) of a theorem for cyclotomic MZV's proved by Brown \cite{Bro12} for $N=1$ and Deligne \cite{Del10} for $N=2,3,4,8$ (see also \cite{Gla16}). More specifically,
\begin{itemize}
\item Brown \cite{Bro12} proved that the $\mathbb Q$-vector space spanned by classical MZV's of fixed weight is spanned by Hoffman's basis consisting of $\zeta(n_1,...,n_r)$ of the same weight with $n_i \in \{2,3\}$. As a corollary, he recovered the sharp upper bound on the dimension of this vector space proved by Terasoma \cite{Ter02} and Deligne-Goncharov \cite{DG05}. Brown used the theory of mixed Tate motives and the motivic version of MZV's. The proof is by induction on the level, i.e., the number of $3$'s in Hoffman's basis, and based on an identity among MZV's due to Zagier \cite{Zag12}. 

\item Deligne \cite{Del10} gave a generating set for the $\mathbb Q$-vector space spanned by cyclotomic MZV's for $N=2,3,4,8$ (Deligne used the terminology MZV's relative to $\mu_N$). He also used the theory of mixed Tate motives and the motivic version of MZV's. But the proof is by induction on the depth. It is already noted that the depth is pathological for $N=1$ (see \cite[page 102, lines 23-25]{Del10}).

\item We refer the reader to the articles \cite{Bro14,Del13} for more details.
\end{itemize}

2) Theorem \ref{theorem: weak BD cyclotomic MZV}, Part 1 states that $\mathcal{CT}_{N,w}$ is an integral generating set of $\mathcal{CZ}_{N,w}$. Furthermore, it gives an effective algorithm to express any cyclotomic MZV's as an $A_N$-linear combination of cyclotomic MZV's in $\mathcal{CT}_{N,w}$. In the classical setting, these results are not known as discussed in \cite[page 2, the discussion after Theorem 0.2]{Del13} and \cite[page 103, lines 8--10]{Del10} (see also \cite{Bro12b}).
\end{remark}


\subsection{Multiple polylogarithms in positive characteristic} \label{sec: CMPL} \ppar 

We now review the notion of multiple polylogarithms (or Carlitz multiple polylogarithms) in positive characteristic. We put $\ell_0 := 1$ and $\ell_d := \prod^d_{i=1}(\theta - \theta^{q^i})$ for all $d \in \mathbb{N}$. Letting $\mathfrak s = (s_1 , \dots, s_n) \in \mathbb{N}^n$, for $d \in \mathbb{Z}$, we define 
\begin{align*}
		\Si_d(\mathfrak s) &= \sum\limits_{d=d_1> \dots > d_n\geq 0} \dfrac{1 }{\ell_{d_1}^{s_1} \dots \ell_{d_n}^{s_n}} \in K, \\
	\Si_{<d}(\mathfrak s) &= \sum\limits_{d>d_1> \dots > d_n\geq 0} \dfrac{1 }{\ell_{d_1}^{s_1} \dots \ell_{d_n}^{s_n}} \in K.
\end{align*}

Let $\begin{pmatrix}
 \fe  \\
\fs  \end{pmatrix}  =  \begin{pmatrix}
 \epsilon_1 & \dots & \epsilon_n \\
s_1 & \dots & s_n \end{pmatrix}$ be a positive array. For $d \in \mathbb{Z}$, we put
\begin{align*}
        \Si_d \begin{pmatrix}
 \fe  \\
\fs  \end{pmatrix}  = \sum\limits_{d=d_1> \dots > d_n\geq 0} \dfrac{\epsilon_1^{d_1} \dots \epsilon_n^{d_n} }{\ell_{d_1}^{s_1} \dots \ell_{d_n}^{s_n}} \in K_N, \\
        \Si_{<d} \begin{pmatrix}
 \fe  \\
\fs  \end{pmatrix}  = \sum\limits_{d>d_1> \dots > d_n\geq 0} \dfrac{\epsilon_1^{d_1} \dots \epsilon_n^{d_n} }{\ell_{d_1}^{s_1} \dots \ell_{d_n}^{s_n}} \in K_N.
\end{align*}

Then we introduce the Carlitz multiple polylogarithm at roots of unity (CMPL at roots of unity for short) as follows:
\begin{equation*}
    \Li \begin{pmatrix}
 \fe  \\
\fs  \end{pmatrix}  = \sum \limits_{d \geq 0} \Si_d \begin{pmatrix}
 \fe  \\
\fs  \end{pmatrix}  = \sum\limits_{d_1> \dots > d_n\geq 0} \dfrac{\epsilon_1^{d_1} \dots \epsilon_n^{d_n} }{\ell_{d_1}^{s_1} \dots \ell_{d_n}^{s_n}}   \in K_{N,\infty}.
\end{equation*}
 
The connection between cyclotomic MZV's and CMPL's at roots of unity is encoded in the following lemma which turns out to be crucial in the sequel (see Theorem \ref{thm:bridge}).
\begin{lemma} \label{agree}
For all $ \begin{pmatrix}
 \fe  \\
\fs  \end{pmatrix}$ as above such that $s_i \leq q$ for all $i$, we have
\begin{equation*}
   S_d \begin{pmatrix}
 \fe  \\
\fs  \end{pmatrix}  =  \Si_d \begin{pmatrix}
 \fe  \\
\fs  \end{pmatrix}  \quad \text{for all } d \in \mathbb{Z}.
\end{equation*}
Therefore,
\begin{equation*}
    \zeta_A  \begin{pmatrix}
 \fe  \\
\fs  \end{pmatrix}  = \Li \begin{pmatrix}
 \fe  \\
\fs  \end{pmatrix} .
\end{equation*}
\end{lemma}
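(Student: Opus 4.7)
The plan is to reduce to depth one by induction, where the outermost summand factors off cleanly, and then invoke the classical Carlitz power-sum identity. Explicitly, I would isolate the largest-degree variable $a_1$: since the constraint $\deg a_1 = d$ decouples from $d > \deg a_2 > \cdots > \deg a_n \geq 0$, one obtains
\[
S_d\begin{pmatrix} \fe \\ \fs \end{pmatrix} = S_d\begin{pmatrix} \epsilon_1 \\ s_1 \end{pmatrix} \cdot S_{<d}\begin{pmatrix} \fe_- \\ \fs_- \end{pmatrix},
\]
and the same factorization holds on the $\Si$-side with $\ell_d$-denominators. Once $S_d = \Si_d$ in depth one and $S_e = \Si_e$ for every $e \in \mathbb{Z}$ at the shorter tuple $\begin{pmatrix} \fe_- \\ \fs_- \end{pmatrix}$ (which, summed over $e < d$, yields $S_{<d} = \Si_{<d}$ at that tuple), the identity for $\begin{pmatrix} \fe \\ \fs \end{pmatrix}$ falls out. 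The constraint $s_i \leq q$ is inherited by $\fs_-$, so the induction closes.

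The substantive input is then the depth-one identity: for $1 \leq s \leq q$ and $\epsilon \in \Gamma_N$,
\[
S_d\begin{pmatrix} \epsilon \\ s \end{pmatrix} = \epsilon^d \sum_{\substack{a \in A_+ \\ \deg a = d}} \frac{1}{a^s} = \frac{\epsilon^d}{\ell_d^s} = \Si_d\begin{pmatrix} \epsilon \\ s \end{pmatrix}.
\]
The only nontrivial equality here is the classical Carlitz formula $\sum_{\deg a = d} 1/a^s = 1/\ell_d^s$ valid for $1 \leq s \leq q$, which I would cite from Thakur's book \cite{Tha04}. If desired, it can be re-derived from scratch: the case $s=1$ is the logarithmic derivative of the splitting identity $\prod_{c \in \Fq}(\theta-c) = \theta^q-\theta$, the case $s=q$ is forced by Frobenius applied to the $s=1$ case, and the intermediate range is obtained by iterating divided (hyper)derivatives in $\theta$. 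The upper bound $s \leq q$ is essential: once $s > q$, this clean identity fails and $S_d(s)$ becomes a genuine linear combination of $\Si_e$'s governed by Anderson--Thakur polynomials.

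Finally, summing the established equality $S_d = \Si_d$ over $d \geq 0$ yields $\zeta_A\begin{pmatrix} \fe \\ \fs \end{pmatrix} = \Li\begin{pmatrix} \fe \\ \fs \end{pmatrix}$. There is no real obstacle: the cyclotomic characters contribute only the identical diagonal factor $\epsilon_1^{d_1}\cdots\epsilon_n^{d_n}$ on both sides and play no role beyond bookkeeping, so the entire content of the lemma is really the depth-one Carlitz power-sum identity combined with the trivial separation of the top-index variable.
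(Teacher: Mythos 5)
Your proof is correct and reproduces the standard argument that the paper delegates to the cited reference \cite{Tha09}: the characters $\epsilon_i^{\deg a_i}$ depend only on the degrees and hence factor out, the top degree $\deg a_1 = d$ decouples from the constraints on the remaining variables, and the Carlitz power-sum identity $\sum_{a \in A_+,\, \deg a = d} a^{-s} = \ell_d^{-s}$ for $1 \le s \le q$ does the rest. The depth induction is merely a bookkeeping device for this degree-stratification, so the key ingredient and the overall shape are the same.
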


\begin{proof}
See for example \cite{Tha09}.
\end{proof}

We now equip an algebra structure for CMPL's at roots of unity. We observe 
\begin{equation} \label{eq: product CMPL}
  \Si_d \begin{pmatrix}
 \varepsilon \\
s  \end{pmatrix}   \Si_d \begin{pmatrix}
 \epsilon  \\
t  \end{pmatrix}   = \Si_d \begin{pmatrix}
 \varepsilon \epsilon \\
s + t  \end{pmatrix},
\end{equation}
hence, for  $\mathfrak{t} = (t_1, \dots, t_n)$,
\begin{equation}\label{eq: redsum}
  \Si_d \begin{pmatrix}
 \varepsilon \\
s  \end{pmatrix}   \Si_d \begin{pmatrix}
 \fe  \\
\mathfrak{t}  \end{pmatrix}   = \Si_d \begin{pmatrix}
 \varepsilon \epsilon_1 & \fe_{-}  \\
s + t_1  & \mathfrak{t}_{-} \end{pmatrix}.
\end{equation}

We emphasize that one advantage of working with CMPL's at roots of unity is that the product of these objects as in Eq. \eqref{eq: product CMPL} is much simpler than that of cyclotomic MZV's in Eq. \eqref{eq: Chen}. We will use this fact to obtain an analogue of Brown-Deligne's theorems for CMPL's at roots of unity (see Theorem \ref{thm: strong BD CMPL}).

Using Eqs. \eqref{eq: product CMPL} and \eqref{eq: redsum} we obtain
\begin{proposition} \label{polysums}
Let $ \begin{pmatrix}
 \fve  \\
\fs  \end{pmatrix}$, $ \begin{pmatrix}
 \fe  \\
\mathfrak{t}  \end{pmatrix}$ be two positive arrays. Then

1) There exist $f_i \in \mathbb{F}_q$ and positive arrays $ \begin{pmatrix}
 \fm_i  \\
\mathfrak{u}_i  \end{pmatrix}$ with $ \begin{pmatrix}
 \fm_i  \\
\mathfrak{u}_i  \end{pmatrix}  \leq  \begin{pmatrix}
 \fve  \\
\fs  \end{pmatrix}  +  \begin{pmatrix}
 \fe  \\
\mathfrak{t}  \end{pmatrix}  $ and $\depth(\mathfrak{u}_i) \leq \depth(\fs) + \depth(\mathfrak{t})$ for all $i$  such that
    \begin{equation*}
        \Si_d \begin{pmatrix}
 \fve  \\
\fs  \end{pmatrix} \Si_d \begin{pmatrix}
 \fe  \\
\mathfrak{t}  \end{pmatrix}  = \sum \limits_i f_i \Si_d \begin{pmatrix}
 \fm_i  \\
\mathfrak{u}_i  \end{pmatrix}  \quad \text{for all } d \in \mathbb{Z}.
    \end{equation*}

2) There exist $f'_i \in \mathbb{F}_q$ and positive arrays $ \begin{pmatrix}
 \fm'_i  \\
\mathfrak{u}'_i  \end{pmatrix}$ with $ \begin{pmatrix}
 \fm'_i  \\
\mathfrak{u}'_i  \end{pmatrix}  \leq  \begin{pmatrix}
 \fve  \\
\fs  \end{pmatrix}  +  \begin{pmatrix}
 \fe  \\
\mathfrak{t}  \end{pmatrix}  $ and $\depth(\mathfrak{u}'_i) \leq \depth(\fs) + \depth(\mathfrak{t})$ for all $i$  such that
    \begin{equation*}
        \Si_{<d} \begin{pmatrix}
 \fve  \\
\fs  \end{pmatrix} \Si_{<d} \begin{pmatrix}
 \fe  \\
\mathfrak{t}  \end{pmatrix}  = \sum \limits_i f'_i \Si_{<d} \begin{pmatrix}
 \fm'_i  \\
\mathfrak{u}'_i  \end{pmatrix}  \quad \text{for all } d \in \mathbb{Z}.
    \end{equation*}

3) There exist $f''_i \in \mathbb{F}_q$ and positive arrays $ \begin{pmatrix}
 \fm''_i  \\
\mathfrak{u}''_i  \end{pmatrix}$ with $ \begin{pmatrix}
 \fm''_i  \\
\mathfrak{u}''_i  \end{pmatrix}  \leq  \begin{pmatrix}
 \fve  \\
\fs  \end{pmatrix}  +  \begin{pmatrix}
 \fe  \\
\mathfrak{t}  \end{pmatrix}  $ and $\depth(\mathfrak{u}''_i) \leq \depth(\fs) + \depth(\mathfrak{t})$ for all $i$  such that
    \begin{equation*}
        \Si_d \begin{pmatrix}
 \fve  \\
\fs  \end{pmatrix} \Si_{<d} \begin{pmatrix}
 \fe  \\
\mathfrak{t}  \end{pmatrix}  = \sum \limits_i f''_i \Si_d \begin{pmatrix}
 \fm''_i  \\
\mathfrak{u}''_i  \end{pmatrix}  \quad \text{for all } d \in \mathbb{Z}.
    \end{equation*}

\end{proposition}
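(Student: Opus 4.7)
The approach is to prove all three parts simultaneously by induction on the total depth $n + m$, where $n = \depth(\fs)$ and $m = \depth(\mathfrak t)$, using a stuffle-type decomposition combined with the two fundamental identities \eqref{eq: product CMPL} and \eqref{eq: redsum}. The base case (one of the two depths is zero) is trivial.

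For Part 2, I would start from the elementary recursion $\Si_{<d}(\fve; \fs) = \sum_{d_1 < d} \Si_{d_1}(\varepsilon_1; s_1)\, \Si_{<d_1}(\fve_-; \fs_-)$, and analogously for the second factor, expand the product, and split the resulting double sum over $(d_1, d_1')$ into the three regions $d_1 > d_1'$, $d_1 < d_1'$, and $d_1 = d_1'$. The two unequal regions collapse into a single $\Si_{d_1}$ multiplied by a product of two $\Si_{<d_1}$'s of strictly smaller total depth; the equal region uses \eqref{eq: product CMPL} to fuse $\Si_{d_1}(\varepsilon_1; s_1) \Si_{d_1}(\epsilon_1; t_1)$ into $\Si_{d_1}(\varepsilon_1 \epsilon_1; s_1 + t_1)$, again leaving a product of two $\Si_{<d_1}$'s of smaller total depth. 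In every region the remaining inner product is rewritten by the induction hypothesis as $\sum_j f_j \Si_{<d_1}(\fm_j; \mathfrak u_j)$, after which summing over $d_1 < d$ and prepending the remaining top term converts the expression into the required $\Si_{<d}$ form.

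Parts 1 and 3 then follow from Part 2 with very little extra work. For Part 3, I would write $\Si_d(\fve; \fs) = \Si_d(\varepsilon_1; s_1)\, \Si_{<d}(\fve_-; \fs_-)$ so that $\Si_d(\fve; \fs)\, \Si_{<d}(\fe; \mathfrak t) = \Si_d(\varepsilon_1; s_1) \cdot \bigl[\Si_{<d}(\fve_-; \fs_-)\, \Si_{<d}(\fe; \mathfrak t)\bigr]$; the bracket is handled by Part 2, and prepending $\Si_d(\varepsilon_1; s_1)$ via \eqref{eq: redsum} yields the required form. For Part 1, I would use \eqref{eq: product CMPL} to merge the two outermost factors $\Si_d(\varepsilon_1; s_1) \Si_d(\epsilon_1; t_1)$ into $\Si_d(\varepsilon_1 \epsilon_1; s_1 + t_1)$, reducing to a product of two $\Si_{<d}$'s already controlled by Part 2.

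The only delicate point, and hence the hardest part, is the combinatorial bookkeeping: one must verify in each of the three regions that every produced array $(\fm_i; \mathfrak u_i)$ satisfies the three conditions defining $(\fm_i; \mathfrak u_i) \leq (\fve; \fs) + (\fe; \mathfrak t)$ — equal total weight, the partial-sum inequality $u_1 + \dots + u_j \leq (s_1 + t_1) + \dots + (s_j + t_j)$ for all $j$, and the character identity $\chi(\fm_i) = \chi(\fve)\chi(\fe)$ — tracking carefully how the index shift caused by prepending $\varepsilon_1$ (resp.\ $\varepsilon_1 \epsilon_1$) interacts with the inductive bound supplied by the hypothesis on $(\fve_-; \fs_-) + (\fe; \mathfrak t)$ (resp.\ on $(\fve_-; \fs_-) + (\fe_-; \mathfrak t_-)$). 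The depth bound $\depth(\mathfrak u_i) \leq n + m$ is immediate from the decomposition. No new idea beyond \cite[\S 2--3]{ND21} and \cite{IKLNDP22} is needed; the computation is in fact noticeably simpler than the one for cyclotomic MZV's in Proposition~\ref{sums} precisely because there is no Chen-type error term in \eqref{eq: product CMPL}.
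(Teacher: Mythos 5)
Your proof is correct and follows essentially the same line as the paper's (which omits the argument, citing \cite[Proposition 2.1]{ND21}): expand via the recursion, split the double sum into the regions $d_1>d_1'$, $d_1<d_1'$, $d_1=d_1'$, fuse the diagonal via \eqref{eq: product CMPL}, apply the induction hypothesis on total depth, and reattach the top term; Parts 1 and 3 then reduce to Part 2. One small inaccuracy: the ``prepending'' step in Parts 1 and 3 is just the definitional identity $\Si_d\bigl(\varepsilon,\fe;s,\frak t\bigr)=\Si_d(\varepsilon;s)\,\Si_{<d}(\fe;\frak t)$ rather than an application of \eqref{eq: redsum}, which instead merges two $\Si_d$ factors; this does not affect the correctness of the argument.
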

\begin{proof}
The proof follows the same line as in \cite[Proposition 2.1]{ND21}. We omit the details.
\end{proof}

We denote by $\mathcal{CL}_N$ the $K_N$-vector space spanned by the CMPL's at roots of unity and by $\mathcal{CL}_{N,w}$ the $K_N$-vector space spanned by the CMPL's at roots of unity of weight $w$. Proposition~\ref{polysums} implies that $\mathcal{CL}_N$ is a $K_N$-algebra which is  commutative and associative.

\subsection{Brown-Deligne's theorems for CMPL's at roots of unity} \label{sec:BD CMPL} \ppar

This section is devoted to the development of the so-called algebraic theory as in \cite[\S 2 and \S 3]{ND21} to the setting of CMPL's at roots of unity. We adopt the operators $\mathcal B^*$ and $\mathcal C$ of Todd \cite{Tod18} and the operator $\mathcal{BC}$ of Ngo Dac  \cite{ND21} to this setting. We then derive an analogue of Brown-Deligne's theorems for CMPL's at roots of unity (see Theorem \ref{theorem: weak BD CMPL}). As a consequence, we prove that for a fixed weight the vector space spanned by cyclotomic MZV's and that spanned by CMPL's at roots of unity are the same (see Theorem \ref{thm:bridge}). Finally, we exploit the fact that the product formula for CMPL's at roots of unity is ``simple'' and prove a strong version of Brown-Deligne's theorems for CMPL's at roots of unity (see Theorem \ref{thm: strong BD CMPL}).

When $N=q-1$ (i.e., the case of alternating CMPL's), the theory that we develop in this section was presented in detail in our previous paper \cite{IKLNDP22}. Our presentation follows the same line as in {\it loc. cit.} We write down some details for the convenience of the reader.

\subsubsection{Binary relations} \label{sec:Todd}

A binary relation is a $K_N$-linear combination of the form
\begin{equation*}
    \sum \limits_i a_i \Si_d \begin{pmatrix}
 \fve_i  \\
\fs_i  \end{pmatrix}  + \sum \limits_i b_i \Si_{d+1} \begin{pmatrix}
 \fe_i  \\
\mathfrak{t}_i  \end{pmatrix}  =0 \quad \text{for all } d \in \mathbb{Z},
\end{equation*}
where $a_i,b_i \in K_N$ and $ \begin{pmatrix}
 \fve_i  \\
\fs_i  \end{pmatrix},  \begin{pmatrix}
 \fe_i  \\
\mathfrak{t}_i  \end{pmatrix}$ are positive arrays of the same weight.

We denote by $\mathfrak{BR}_{w}$ the set of all binary relations of weight $w$. Then $\mathfrak{BR}_{w}$ is a $K_N$-vector space. From the fundamental relation in \cite[\S 3.4.6]{Tha09} follows an important family of binary relations
\begin{equation} \label{eq: fundamental relation epsilon}
 R_{\epsilon} \colon \quad  \Si_d \begin{pmatrix}
 \epsilon\\
q  \end{pmatrix}  + \epsilon^{-1}D_1 \Si_{d+1} \begin{pmatrix}
 \epsilon& 1 \\
1 & q-1  \end{pmatrix}  =0.
\end{equation}
Here $D_1 = \theta^q - \theta$ given in Eq. \eqref{eq: D_1} and $\epsilon \in \Gamma_N$. 

For the rest of this section, let $R \in \mathfrak{BR}_w$ be a binary relation of the form
\begin{equation} \label{eq: binary relation 2}
    R(d) \colon \quad \sum \limits_i a_i \Si_d \begin{pmatrix}
 \fve_i  \\
\fs_i  \end{pmatrix}  + \sum \limits_i b_i \Si_{d+1} \begin{pmatrix}
 \fe_i  \\
\mathfrak{t}_i  \end{pmatrix}  =0,
\end{equation}
where $a_i,b_i \in K_N$ and $ \begin{pmatrix}
 \fve_i  \\
\fs_i  \end{pmatrix},  \begin{pmatrix}
 \fe_i  \\
\mathfrak{t}_i  \end{pmatrix}$ are positive arrays of the same weight. We now define some operators on $K_N$-vector spaces of binary relations.

\subsubsection{Operators $\mathcal B^*$} 

Let $ \begin{pmatrix}
 \sigma  \\
v  \end{pmatrix}$ be a positive array of depth $1$. We define an operator
$\mathcal B^*_{\sigma,v} \colon \mathfrak{BR}_{w} \longrightarrow \mathfrak{BR}_{w+v}$
as follows: for each $R \in \mathfrak{BR}_{w}$,
the image $\mathcal B^*_{\sigma,v}(R) = \Si_d \begin{pmatrix}
 \sigma  \\
v  \end{pmatrix} \sum_{j < d} R(j)$ is a binary relation of the form
\begin{align*}
    0 &= \Si_d \begin{pmatrix}
 \sigma  \\
v  \end{pmatrix}   \left(\sum \limits_ia_i \Si_{<d} \begin{pmatrix}
 \fve_i  \\
\fs_i  \end{pmatrix}  + \sum \limits_i  b_i \Si_{<d+1} \begin{pmatrix}
 \fe_i  \\
\mathfrak{t}_i  \end{pmatrix} \right)  \\
    &= \sum \limits_i a_i \Si_d \begin{pmatrix}
 \sigma  \\
v  \end{pmatrix} \Si_{<d} \begin{pmatrix}
 \fve_i  \\
\fs_i  \end{pmatrix}  + \sum \limits_i  b_i \Si_d \begin{pmatrix}
 \sigma  \\
v  \end{pmatrix}  \Si_{<d} \begin{pmatrix}
 \fe_i  \\
\mathfrak{t}_i  \end{pmatrix}  + \sum \limits_i  b_i \Si_d \begin{pmatrix}
 \sigma  \\
v  \end{pmatrix}  \Si_{d} \begin{pmatrix}
 \fe_i  \\
\mathfrak{t}_i  \end{pmatrix} \\
    &= \sum \limits_i a_i \Si_d \begin{pmatrix}
\sigma & \fve_i  \\
v& \fs_i  \end{pmatrix}  + \sum \limits_i  b_i \Si_d \begin{pmatrix}
\sigma & \fe_i  \\
v& \mathfrak{t}_i  \end{pmatrix}  + \sum \limits_i  b_i  \Si_d \begin{pmatrix}
 \sigma \epsilon_{i1}  & \fe_{i-} \\
v + t_{i1} & \mathfrak{t}_{i-} \end{pmatrix} .
\end{align*}
The last equality follows from the explicit formula \eqref{eq: redsum}.

Let $ \begin{pmatrix}
 \Sigma  \\
V  \end{pmatrix}  =  \begin{pmatrix}
 \sigma_1 & \dots & \sigma_n \\
v_1 & \dots & v_n \end{pmatrix}$ be a positive array. We define $\mathcal{B}^*_{\Sigma,V}(R) $ by
$\mathcal B^*_{\Sigma,V}(R) = \mathcal B^*_{\sigma_1,v_1} \circ \dots \circ \mathcal B^*_{\sigma_n,v_n}(R)$.

\subsubsection{Operators $\mathcal C$}  

Let $ \begin{pmatrix}
 \Sigma  \\
V  \end{pmatrix}$ be a positive array of weight $v$. We define an operator $\mathcal C_{\Sigma,V}(R) \colon \mathfrak{BR}_{w} \longrightarrow \mathfrak{BR}_{w+v}$
as follows: for each $R \in \mathfrak{BR}_{w}$,
the image $\mathcal C_{\Sigma,V}(R) = R(d) \Si_{<d+1} \begin{pmatrix}
 \Sigma  \\
V  \end{pmatrix}$ is a binary relation of the form
\begin{align*}
    0 &= \left( \sum \limits_i a_i \Si_d \begin{pmatrix}
 \fve_i  \\
\fs_i  \end{pmatrix}  + \sum \limits_i b_i \Si_{d+1} \begin{pmatrix}
 \fe_i  \\
\mathfrak{t}_i  \end{pmatrix} \right) \Si_{<d+1} \begin{pmatrix}
 \Sigma  \\
V  \end{pmatrix}   \\
    &= \sum \limits_i a_i \Si_d \begin{pmatrix}
 \fve_i  \\
\fs_i  \end{pmatrix} \Si_{d} \begin{pmatrix}
 \Sigma  \\
V  \end{pmatrix}  + \sum \limits_i a_i \Si_d \begin{pmatrix}
 \fve_i  \\
\fs_i  \end{pmatrix} \Si_{<d} \begin{pmatrix}
 \Sigma  \\
V  \end{pmatrix}  + \sum \limits_i b_i \Si_{d+1} \begin{pmatrix}
 \fe_i  \\
\mathfrak{t}_i  \end{pmatrix} \Si_{<d+1} \begin{pmatrix}
 \Sigma  \\
V  \end{pmatrix} \\
    &= \sum \limits_i f_i \Si_d \begin{pmatrix}
 \fm_i  \\
\mathfrak{u}_i  \end{pmatrix}  + \sum \limits_i f'_i \Si_{d+1} \begin{pmatrix}
 \fm'_i  \\
\mathfrak{u}'_i  \end{pmatrix} .
\end{align*}
The last equality follows from Proposition \ref{polysums}.

\subsubsection{Operators $\mathcal{BC}$}  

Let $\epsilon \in \Gamma_N$. We define an operator $\mathcal{BC}_{\epsilon,q} \colon \mathfrak{BR}_{w} \longrightarrow \mathfrak{BR}_{w+q}$ as follows: for $R \in \mathfrak{BR}_{w}$  as in Eq. \eqref{eq: binary relation 2},
the image $\mathcal{BC}_{\epsilon,q}(R)$ is a binary relation given by
\begin{align*}
    \mathcal{BC}_{\epsilon,q}(R) = \mathcal B^*_{\epsilon,q}(R) - \sum\limits_i b_i \mathcal C_{\fe_i,\mathfrak{t}_i} (R_{\epsilon}).
\end{align*}

By direct calculations, we see that $\mathcal{BC}_{\epsilon,q}(R)$ is of the form
\begin{equation*}
   \sum \limits_i a_i \Si_d \begin{pmatrix}
\epsilon& \fve_i  \\
q& \fs_i  \end{pmatrix}  + \sum \limits_{i,j} b_{ij} \Si_{d+1} \begin{pmatrix}
\epsilon& \fe_{ij}  \\
1& \mathfrak{t}_{ij}  \end{pmatrix} =0,
\end{equation*}
where $b_{ij} \in K_N$ and $ \begin{pmatrix}
\fe_{ij}  \\
\mathfrak{t}_{ij}  \end{pmatrix}$ are positive arrays satisfying $ \begin{pmatrix}
\fe_{ij}  \\
\mathfrak{t}_{ij}  \end{pmatrix}  \leq  \begin{pmatrix}
1  \\
q-1  \end{pmatrix}  +  \begin{pmatrix}
\fe_{i}  \\
\mathfrak{t}_{i}  \end{pmatrix}$ for all $j$.

\subsubsection{A key expression}

We recall the following result which is crucial to obtain the strong version of Brown-Deligne's theorems in our setting (see \cite[Proposition 1.6]{IKLNDP22}). The key fact is that we exploit the simple product formula for CMPL's at roots of unity given by Eq. \eqref{eq: Chen} to obtain a simple and explicit expression of terms of type 1.

\begin{proposition}\label{polydecom}
We recall that $A_N=k_N[\theta]$.

1) Let $ \begin{pmatrix}
 \fve  \\
\fs  \end{pmatrix}  =  \begin{pmatrix}
 \varepsilon_1 & \dots & \varepsilon_n \\
s_1 & \dots & s_n \end{pmatrix}$ be a positive array such that $\Init(\fs) = (s_1, \dots, s_{k-1})$ (see \S \ref{sec: notation}) for some $1 \leq k \leq n$, and let $\varepsilon$ be an element in $\Gamma_N$.  Then $\Li \begin{pmatrix}
 \fve  \\
\fs  \end{pmatrix}$ can be decomposed as follows
    \begin{equation*}
    \Li \begin{pmatrix}
 \fve  \\
\fs  \end{pmatrix}
    = \underbrace{ - \Li \begin{pmatrix}
 \fve'  \\
\fs'  \end{pmatrix} }_\text{type 1} + \underbrace{\sum\limits_i b_i\Li \begin{pmatrix}
 \fe_i'  \\
\mathfrak{t}'_i  \end{pmatrix} }_\text{type 2} + \underbrace{\sum\limits_i c_i\Li \begin{pmatrix}
 \fm_i  \\
\mathfrak{u}_i  \end{pmatrix} }_\text{type 3}  ,
    \end{equation*}
    where $ b_i, c_i \in A_N$ are divisible by $D_1$ given as in Eq. \eqref{eq: D_1} such that for all $i$, the following properties are satisfied:
    \begin{itemize}
        \item For all positive arrays $ \begin{pmatrix}
 \fe  \\
\mathfrak{t}  \end{pmatrix}$ appearing on the right-hand side, $\depth(\mathfrak{t}) \geq \depth(\fs)$ and $T_k(\mathfrak{t}) \leq T_k(\fs)$.

     \item For the positive array $ \begin{pmatrix}
 \fve'  \\
\fs'  \end{pmatrix}$ of type $1$ with respect to $ \begin{pmatrix}
 \fve  \\
\fs  \end{pmatrix}$, we have
\begin{align*}
\begin{pmatrix}
 \fve'  \\
\fs'  \end{pmatrix} =
\begin{pmatrix}
\varepsilon_1 & \dots & \varepsilon_{k-1} & \varepsilon & \varepsilon^{-1}\varepsilon_{k} & \varepsilon_{k+1} & \dots & \varepsilon_n \\
s_1 & \dots & s_{k-1} & q &  s_k- q & s_{k+1} & \dots & s_n \end{pmatrix}.
\end{align*}
Moreover, for all $k \leq \ell \leq n$,
        \begin{equation*}
s'_{1} +  \dots + s'_\ell < s_1 +  \dots + s_\ell.
     \end{equation*}

\item For the positive array $ \begin{pmatrix}
 \fe'  \\
\mathfrak{t}'  \end{pmatrix}$ of type $2$ with respect to $ \begin{pmatrix}
 \fve  \\
\fs  \end{pmatrix}$, for all $k \leq \ell \leq n$,
        \begin{equation*}
t'_{1} +  \dots + t'_\ell < s_1 +  \dots + s_\ell.
     \end{equation*}

        \item For the positive array $ \begin{pmatrix}
 \fm  \\
\mathfrak{u}  \end{pmatrix}$ of type $3$ with respect to $ \begin{pmatrix}
 \fve  \\
\fs  \end{pmatrix}$, we have $\Init(\fs) \prec\Init(\mathfrak{u})$.
    \end{itemize}

\noindent 2) Let $ \begin{pmatrix}
 \fve  \\
\fs  \end{pmatrix}  =  \begin{pmatrix}
 \varepsilon_1 & \dots & \varepsilon_k \\
s_1 & \dots & s_k \end{pmatrix}$ be a positive array such that $\Init(\fs) = \fs$ and $s_k = q$. Then $\Li \begin{pmatrix}
 \fve  \\
\fs  \end{pmatrix}$ can be decomposed as follows:
    \begin{equation*}
    \Li \begin{pmatrix}
 \fve  \\
\fs  \end{pmatrix}
    =   \underbrace{\sum\limits_i b_i\Li \begin{pmatrix}
 \fe'_i  \\
\mathfrak{t}'_i  \end{pmatrix} }_\text{type 2} + \underbrace{\sum\limits_i c_i\Li \begin{pmatrix}
 \fm_i  \\
\mathfrak{u}_i  \end{pmatrix} }_\text{type 3}  ,
    \end{equation*}
    where $ b_i, c_i \in A_N$ divisible by $D_1$ such that for all $i$, the following properties are satisfied:
    \begin{itemize}
        \item For all positive arrays $ \begin{pmatrix}
 \fe  \\
\mathfrak{t}  \end{pmatrix}$ appearing on the right-hand side, $\depth(\mathfrak{t}) \geq \depth(\fs)$ and $T_k(\mathfrak{t}) \leq T_k(\fs)$.

\item For the positive array $ \begin{pmatrix}
 \fe'  \\
\mathfrak{t}'  \end{pmatrix}$ of type $2$ with respect to $ \begin{pmatrix}
 \fve  \\
\fs  \end{pmatrix}$,
        \begin{equation*}
t'_{1} +  \dots + t'_k < s_1 +  \dots + s_k.
     \end{equation*}

        \item For the positive array $ \begin{pmatrix}
 \fm  \\
\mathfrak{u}  \end{pmatrix}$ of type $3$ with respect to $ \begin{pmatrix}
 \fve  \\
\fs  \end{pmatrix}$, we have $\Init(\fs) \prec\Init(\mathfrak{u})$.
\end{itemize}
\end{proposition}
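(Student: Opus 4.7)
The plan is to construct, for each positive array in the hypothesis, an explicit binary relation whose summation over $d \geq 0$ yields the claimed decomposition of the corresponding $\Li$. The main ingredients are the fundamental binary relations $R_\epsilon$ from \eqref{eq: fundamental relation epsilon}, the product formulas of Proposition \ref{polysums}, and the operators $\mathcal{B}^*$ of \S \ref{sec:Todd}. I would first establish the base case of Part 1 ($k = 1$) by a direct computation using the product formulas \eqref{eq: product CMPL} and \eqref{eq: redsum}, which yields the key identity
\begin{equation*}
\Si_d\begin{pmatrix}\fve\\\fs\end{pmatrix} + \Si_d\begin{pmatrix}\fve'\\\fs'\end{pmatrix} = \Si_d\begin{pmatrix}\varepsilon\\q\end{pmatrix} \Si_{<d+1}\begin{pmatrix}\varepsilon^{-1}\varepsilon_1 & \varepsilon_2 & \cdots & \varepsilon_n\\ s_1 - q & s_2 & \cdots & s_n\end{pmatrix}.
\end{equation*}
Substituting $R_\varepsilon$ for $\Si_d(\varepsilon; q)$ and expanding the resulting $\Si_{d+1}\Si_{<d+1}$ product via Proposition \ref{polysums}, Part 3, one rewrites the right-hand side as $-\varepsilon^{-1} D_1 \sum_i f_i \Si_{d+1}(\fm_i; \mathfrak{u}_i)$ with $f_i \in \F_q$ and $(\fm_i; \mathfrak{u}_i)$ satisfying the constraints of Proposition \ref{polysums}. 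Summing over $d \geq 0$ and noting that $\Si_0(\fm_i; \mathfrak{u}_i) = 0$ (since $\depth(\mathfrak{u}_i) \geq 2$) gives the desired identity for $k=1$.

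For general $k \geq 2$ in Part 1, I would apply the above base case to the suffix sub-array $(\varepsilon_k, \dots, \varepsilon_n; s_k, \dots, s_n)$ and prepend the columns $(\varepsilon_j; s_j)_{j < k}$ via the composition $\mathcal{B}^*_{\varepsilon_1, s_1} \circ \cdots \circ \mathcal{B}^*_{\varepsilon_{k-1}, s_{k-1}}$. Each such operator replaces every $\Si_j$ by $\Si_d(\varepsilon_j; s_j)\Si_{<d}$ and rewrites the resulting products via \eqref{eq: redsum} and Proposition \ref{polysums}, Part 1; in particular it preserves divisibility by $D_1$ of all error coefficients and converts the binary relation into a $K_N$-linear combination of $\Si_d$-terms only. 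Part 2 is handled in parallel by applying the same operator composition directly to the fundamental relation $R_{\varepsilon_k}$ of weight $q$; because $s_k - q = 0$, no type-$1$ summand appears. In each case, summation over $d \geq 0$ converts the resulting binary relation into the stated identity among $\Li$-values.

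The final step is to classify the error terms into types $2$ and $3$. Every such $\Li(\fm; \mathfrak{u})$ automatically satisfies $w(\mathfrak{u}) = w(\fs)$, $\depth(\mathfrak{u}) \geq \depth(\fs)$, and $T_k(\mathfrak{u}) \leq T_k(\fs)$, by the constraints of Proposition \ref{polysums}; one declares the term of type $3$ whenever $\Init(\mathfrak{u}) \succ \Init(\fs)$, and of type $2$ otherwise. The main obstacle is the combinatorial verification that every type-$2$ error term satisfies the strict partial-sum inequality $u_1 + \cdots + u_\ell < s_1 + \cdots + s_\ell$ for all $\ell \geq k$. This follows from the observation that the columns $(1, q-1)$ inserted by $R_\varepsilon$ (or by $R_{\varepsilon_k}$ in Part 2) strictly decrease partial sums at positions $\geq k$ unless the initial tuple lengthens, in which case the term has already been reclassified as type $3$. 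The argument parallels \cite[Proposition 1.6]{IKLNDP22}, which handles the case $N = q - 1$; the $N$-th cyclotomic characters in $\Gamma_N$ are tracked through the product expansions without essential difficulty, since Proposition \ref{polysums} already accommodates arbitrary characters.
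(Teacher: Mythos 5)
The central gap in your proposal is that you prepend \emph{all} of the columns $(\varepsilon_1,s_1),\dots,(\varepsilon_{k-1},s_{k-1})$ using only the operator $\mathcal{B}^*$, whereas the paper's proof crucially uses the operator $\mathcal{BC}$ (and not $\mathcal{B}^*$) for every column with $s_j=q$ inside $\Init(\fs)$. This is not a cosmetic choice: $\mathcal{B}^*_{\sigma,q}$ applied to a binary relation $R$ produces a term $\Si_d(\sigma\epsilon_{i1},\fe_{i-};\,q+t_{i1},\mathfrak{t}_{i-})$ for each $\Si_{d+1}(\fe_i;\mathfrak{t}_i)$ appearing in $R$, and since the error part of the base-case relation always has $t_{i1}=1$, the merged first entry $q+1$ exceeds $q$. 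Once such an entry sits at a position $<k$, the resulting tuple can violate the type~2/type~3 dichotomy. The operator $\mathcal{BC}_{\epsilon,q}(R)=\mathcal{B}^*_{\epsilon,q}(R)-\sum_i b_i\,\mathcal{C}_{\fe_i,\mathfrak{t}_i}(R_\epsilon)$ is designed precisely to cancel those merged overflow terms, leaving a clean binary relation whose $\Si_d$-part always has first column $\binom{\epsilon}{q}$ and whose $\Si_{d+1}$-part always has first column $\binom{\epsilon}{1}$; this is what makes the subsequent $\mathcal{B}^*_{\Sigma',V'}$ (applied only to columns with $s_j<q$) safe, since $s_j+1\leq q$.

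A concrete counterexample to your dichotomy: take $q=3$, $\fs=(3,3,4)$, so $k=3$ and $\Init(\fs)=(3,3)$. The base case applied to the suffix $(s_3)=(4)$ produces the error term $\Si_{d+1}(\varepsilon,\varepsilon^{-1}\varepsilon_3;1,3)$ with coefficient a unit multiple of $D_1$, coming from the $e_2=e_3$ case in the expansion of $\Si_{d+1}(\varepsilon,1;1,2)\Si_{<d+1}(\varepsilon^{-1}\varepsilon_3;1)$. Applying $\mathcal{B}^*_{\varepsilon_2,3}$ merges this into $\Si_d(\varepsilon_2\varepsilon,\varepsilon^{-1}\varepsilon_3;4,3)$, and $\mathcal{B}^*_{\varepsilon_1,3}$ then prepends to give $\Li(\varepsilon_1,\varepsilon_2\varepsilon,\varepsilon^{-1}\varepsilon_3;\,3,4,3)$. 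For this term one has $\Init(3,4,3)=(3)\prec(3,3)=\Init(\fs)$, so it is not type~3, while the partial sum at $\ell=n=3$ equals $10=w(\fs)$ with no strict drop, so it is not type~2 either. Your stated observation --- that the inserted $(1,q-1)$-columns strictly decrease partial sums at positions $\geq k$ unless the initial tuple lengthens --- fails here precisely because the prepended $q$-column has swallowed the $1$-entry and produced an entry $q+1>q$, shortening rather than lengthening the initial tuple. The paper's choice of the composition $\mathcal{B}^*_{\Sigma',V'}\circ\mathcal{BC}_{\varepsilon_{j+1},q}\circ\cdots\circ\mathcal{BC}_{\varepsilon_{k-1},q}\circ\mathcal{C}_{\Sigma,V}(R_\varepsilon)$, with $j$ the largest index having $s_j<q$, is exactly what prevents this: to make your argument work you would need to replace each $\mathcal{B}^*_{\varepsilon_m,s_m}$ with $s_m=q$ by $\mathcal{BC}_{\varepsilon_m,q}$ and re-run the type classification with the cleaner binary structure it yields.
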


\begin{proof}
For the convenience of the reader we outline how to obtain the decompositions. For $m \in \mathbb{N}$, we denote by $q^{\{m\}}$ the sequence of length $m$ with all terms equal to $q$. We agree by convention that $q^{\{0\}}$ is the empty sequence. Setting $s_0 = 0$, we can assume that there exists a maximal index $j$ with $0 \leq j \leq k-1$ such that $s_j < q$, hence $\Init(\fs) = (s_1, \dots, s_j, q^{\{k-j-1\}})$. We set $ \begin{pmatrix}
\Sigma'  \\
V'  \end{pmatrix}  =  \begin{pmatrix}
 \varepsilon_{1} &\dots & \varepsilon_j \\
 s_1 &\dots & s_j \end{pmatrix} $

For Part 1, since $\Init(\fs) = (s_1, \dots, s_{k-1})$, we get $s_k > q$. We put $\begin{pmatrix}
\Sigma  \\
V  \end{pmatrix}  =  \begin{pmatrix}
\varepsilon^{-1} \varepsilon_{k} & \varepsilon_{k+1} &\dots & \varepsilon_n \\
s_k - q & s_{k+1} &\dots & s_n \end{pmatrix} $. Then the desired decomposition is obtained from the binary relation
\begin{equation*}
\mathcal{B}^*_{\Sigma',V'} \circ \mathcal{BC}_{\varepsilon_{j+1},q} \circ \dots \circ \mathcal{BC}_{\varepsilon_{k-1},q} \circ \mathcal{C}_{\Sigma,V}(R_{\varepsilon}).
\end{equation*} 
Here we recall that $R_{\varepsilon}$ is the binary relation given in Eq. \eqref{eq: fundamental relation epsilon}.
 
For Part 2, the desired decomposition is obtained from the binary relation
\begin{equation*}
\mathcal{B}^*_{\Sigma',V'} \circ \mathcal{BC}_{\varepsilon_{j+1},q} \circ \dots \circ \mathcal{BC}_{\varepsilon_{k-1},q} (R_1).
\end{equation*} 
Again $R_1$ is the binary relation given in Eq. \eqref{eq: fundamental relation epsilon} for $\varepsilon=1$ (known as the fundamental relation in \cite{ND21}).
\end{proof}

\subsubsection{A weak version of Brown-Deligne's theorems for CMPL's at roots of unity} \label{sec:weak BD CMPL} 

We recall (see Theorem \ref{theorem: weak BD cyclotomic MZV}) that $\mathcal{CT}_{N,w}$ denotes the set of all cyclotomic MZV's $\zeta_A \begin{pmatrix}
 \fve  \\
\fs  \end{pmatrix}$  of weight $w$ such that $s_1, \dots, s_{n-1} \leq q$ and $s_n < q$. By Lemma \ref{agree}, we know that $\zeta_A \begin{pmatrix}
 \fve  \\
\fs  \end{pmatrix}=  \Li \begin{pmatrix}
 \fve  \\
\fs  \end{pmatrix}$. Thus $\mathcal{CT}_{N,w}$ equals the set of all CMPL's at roots of unity $\Li \begin{pmatrix}
 \fve  \\
\fs  \end{pmatrix}$  of weight $w$ such that $s_1, \dots, s_{n-1} \leq q$ and $s_n < q$. 

Using Proposition \ref{polydecom} we obtain a weak version of Brown-Deligne's theorems for CMPL's at roots of unity whose proof follows the same line as that in \cite[\S 3]{ND21} (see also \cite[\S 1]{IKLNDP22}):

\begin{theorem} \label{theorem: weak BD CMPL}
1) For all positive arrays $ \begin{pmatrix}
 \fve  \\
\frak t  \end{pmatrix}$ of weight $w$, $\Li \begin{pmatrix}
 \fve  \\
\frak t  \end{pmatrix}$ can be expressed as an $A_N$-linear combination of $\Li \begin{pmatrix}
 \fe  \\
\fs  \end{pmatrix}$'s in $\mathcal{CT}_{N,w}$. 

2) The $K_N$-vector space $\mathcal{CL}_{N,w}$ spanned by all CMPL's at roots of unity of weight $w$ is spanned by the set $\mathcal{CT}_{N,w}$.
\end{theorem}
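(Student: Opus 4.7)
The plan is as follows. Part 2 follows immediately from Part 1 by $K_N$-linearity, since $\mathcal{CL}_{N,w}$ is by definition the $K_N$-span of CMPL's at roots of unity of weight $w$. For Part 1, I would proceed by well-founded induction on positive arrays of weight $w$, using Proposition \ref{polydecom} as the rewriting step.

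Define a partial order $\prec$ on weight-$w$ positive arrays by declaring $\begin{pmatrix}\fm\\\mathfrak u\end{pmatrix}\prec\begin{pmatrix}\fe\\\fs\end{pmatrix}$ whenever either (i) $\Init(\mathfrak u)\succ\Init(\fs)$ in lexicographic order, or (ii) $\Init(\mathfrak u)=\Init(\fs)$ and, writing $k:=|\Init(\fs)|+1$, we have $T_k(\mathfrak u)\leq T_k(\fs)$ with strict inequality $u_1+\dots+u_\ell<s_1+\dots+s_\ell$ for some $\ell\geq k$. Since there are only finitely many arrays of weight $w$, this order is well-founded. The minimal elements of $\prec$ are precisely those $\begin{pmatrix}\fe\\\fs\end{pmatrix}$ with $\Init(\fs)=\fs$ and $s_{\depth(\fs)}<q$, i.e., the arrays indexing $\mathcal{CT}_{N,w}$.

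For any non-minimal array $\begin{pmatrix}\fve\\\mathfrak t\end{pmatrix}$, either (a) some $t_i>q$, equivalently $\Init(\mathfrak t)\neq\mathfrak t$, or (b) $\Init(\mathfrak t)=\mathfrak t$ but $t_{\depth(\mathfrak t)}=q$. I would apply Part 1 of Proposition \ref{polydecom} in case (a) and Part 2 in case (b); this expresses $\Li\begin{pmatrix}\fve\\\mathfrak t\end{pmatrix}$ as an $A_N$-linear combination of CMPL's of types 1, 2, and 3. Type 3 outputs directly satisfy $\Init\succ\Init(\mathfrak t)$ and so fall under (i). Type 1 outputs in case (a) begin with $s_1,\dots,s_{k-1},q$ (each $\leq q$), hence their $\Init$ has length $\geq k$ and is strictly lex-larger than $\Init(\mathfrak t)=(s_1,\dots,s_{k-1})$, again placing them under (i). Type 2 outputs satisfy the strict partial-sum inequality beyond position $k$ together with the containment $T_k(\mathfrak t')\leq T_k(\mathfrak t)$, so they fall under (i) or (ii) of $\prec$. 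By the inductive hypothesis each such strictly smaller CMPL lies in the $A_N$-span of $\mathcal{CT}_{N,w}$; composing with the $A_N$-coefficients produced by Proposition \ref{polydecom} yields the desired expression for $\Li\begin{pmatrix}\fve\\\mathfrak t\end{pmatrix}$.

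The main obstacle, which is largely combinatorial bookkeeping, is verifying that the type 2 outputs of both parts of Proposition \ref{polydecom} indeed fall strictly below the input under $\prec$. This amounts to checking that each possible configuration of $\Init(\mathfrak t')$ (either equal to $\Init(\mathfrak t)$ or strictly lex-larger) forces $\begin{pmatrix}\fe'\\\mathfrak t'\end{pmatrix}\prec\begin{pmatrix}\fve\\\mathfrak t\end{pmatrix}$, and it is directly analogous to the verifications already carried out in \cite[\S 3]{ND21} and \cite[\S 1]{IKLNDP22}.
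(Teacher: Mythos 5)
Your general strategy---well-founded induction driven by Proposition \ref{polydecom}---is the same as the paper's (which defers to \cite[\S 3]{ND21} and \cite[\S 1]{IKLNDP22}). However, the specific partial order $\prec$ you introduce does not actually work, and the gap is not mere bookkeeping: the claim that type-2 outputs of Proposition \ref{polydecom} have $\Init$ \emph{equal to or strictly lex-larger than} $\Init(\mathfrak t)$ is false.

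Concretely, take $q=3$, $\fs=(3,4)$, $w=7$, so that $\Init(\fs)=(3)$ and $k=2$. Tracing the binary relation $\mathcal{BC}_{\varepsilon_1,q}\circ\mathcal C_{\Sigma,V}(R_\varepsilon)$ from the proof of Proposition \ref{polydecom}: the $\Si_{d+1}$ part of $\mathcal C_{\Sigma,V}(R_\varepsilon)$ contains (among others) the array $(1,3)$, and the subsequent $\mathcal{BC}_{\varepsilon_1,3}$ produces from it $\Si_{d+1}$ terms obtained by stuffing $(2)$ into $(1,3)$ and prepending $1$; one such term is $\Si_{d+1}(1,3,3)$, appearing with a nonzero coefficient of the form $(\text{unit})\cdot D_1^2$. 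After summing over $d$, the resulting decomposition of $\Li(3,4)$ therefore involves $\Li(1,3,3)$. This is a type-2 output ($T_2(1,3,3)=(4,3)\le(7)=T_2(3,4)$, with the required strict inequalities at $\ell\ge k$), yet $\Init((1,3,3))=(1,3,3)$, which is \emph{lexicographically strictly smaller} than $\Init((3,4))=(3)$. Hence $(1,3,3)$ falls under neither clause (i) nor clause (ii) of your $\prec$, and $\Li(1,3,3)\notin\mathcal{CT}_{N,7}$ (its last entry equals $q$), so the inductive hypothesis cannot be applied to it. More generally, whenever the maximal index $j$ with $s_j<q$ satisfies $j<k-1$ (i.e. $\Init(\fs)$ ends in some entries equal to $q$), the decomposition produces terms whose $\Init$ begins $(s_1,\dots,s_j,1,\dots)$, which compares strictly below $(s_1,\dots,s_j,q,\dots)$. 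This is exactly the case that your order fails to control.

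A secondary inaccuracy: the minimal elements of your $\prec$ are \emph{not} the arrays indexing $\mathcal{CT}_{N,w}$. For instance with $q=2$, $w=3$, the array $(1,1,1)$ lies in $\mathcal{CT}_{N,3}$ but $(2,1)\prec(1,1,1)$ under your order, so $(1,1,1)$ is not minimal. This particular slip is harmless (the base case of the induction should simply be ``$\begin{pmatrix}\fe\\\fs\end{pmatrix}\in\mathcal{CT}_{N,w}$'' rather than ``minimal''), but it signals that the intended invariant has not been pinned down. The missing idea is a termination measure that is actually monotone under \emph{all} outputs of Proposition \ref{polydecom}---Proposition \ref{polydecom} only constrains $T_k$ and the strict partial sums from position $k$ onward, not the prefix $(t'_1,\dots,t'_{k-1})$, and the correct argument in \cite[\S 3]{ND21} and \cite[\S 1]{IKLNDP22} has to account for the resulting drop in the lex order of $\Init$ (essentially by iterating through the remaining $q$'s in $\Init(\fs)$ before comparing prefixes). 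As written, your well-founded induction can loop.
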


Combining Theorems \ref{theorem: weak BD cyclotomic MZV} and \ref{theorem: weak BD CMPL} allows us to identify the two vector spaces of cyclotomic MZV's and CMPL's at roots of unity.
\begin{theorem} \label{thm:bridge}
The $K_N$-vector space $\mathcal{CZ}_{N,w}$ of cyclotomic MZV's of weight $w$ and the $K_N$-vector space $\mathcal{CL}_{N,w}$ of CMPL's at roots of unity of weight $w$ are the same.
\end{theorem}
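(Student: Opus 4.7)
The plan is to observe that the result follows almost immediately by combining the two generating-set results (Theorem \ref{theorem: weak BD cyclotomic MZV} and Theorem \ref{theorem: weak BD CMPL}) with the elementary identification lemma (Lemma \ref{agree}). The whole point is that the set $\mathcal{CT}_{N,w}$ plays a double role: as a set of symbols it appears in the statements of both weak Brown--Deligne theorems, and as a set of actual elements in $K_{N,\infty}$ it sits inside both $\mathcal{CZ}_{N,w}$ and $\mathcal{CL}_{N,w}$.

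More precisely, I would first recall the definition of $\mathcal{CT}_{N,w}$: positive arrays $\begin{pmatrix}\fe \\ \fs\end{pmatrix}$ of weight $w$ with $s_1, \dots, s_{n-1} \leq q$ and $s_n < q$. For such arrays all components of $\fs$ satisfy $s_i \leq q$, so Lemma \ref{agree} applies and gives the equality of series
\[
\zeta_A\begin{pmatrix}\fe \\ \fs\end{pmatrix} \;=\; \Li\begin{pmatrix}\fe \\ \fs\end{pmatrix}
\]
for every element of $\mathcal{CT}_{N,w}$. Thus the set $\mathcal{CT}_{N,w}$, as a subset of $K_{N,\infty}$, lies simultaneously in $\mathcal{CZ}_{N,w}$ and in $\mathcal{CL}_{N,w}$.

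Next I would conclude by a two-line argument. By Theorem \ref{theorem: weak BD cyclotomic MZV} (Part 2), the $K_N$-vector space $\mathcal{CZ}_{N,w}$ is spanned by $\mathcal{CT}_{N,w}$; by Theorem \ref{theorem: weak BD CMPL} (Part 2), the $K_N$-vector space $\mathcal{CL}_{N,w}$ is spanned by the very same set $\mathcal{CT}_{N,w}$. Since the two vector spaces admit a common generating set inside $K_{N,\infty}$, they coincide. There is no real obstacle here: all the substantive work has been carried out in establishing Theorems \ref{theorem: weak BD cyclotomic MZV} and \ref{theorem: weak BD CMPL}, which required developing the operators $\mathcal{B}^*$, $\mathcal{C}$, $\mathcal{BC}$ and the decomposition in Proposition \ref{polydecom}; the present theorem is simply the clean payoff that packages these two weak Brown--Deligne statements into a single identification of vector spaces.
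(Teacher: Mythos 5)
Your proposal is correct and matches the paper's argument exactly: the paper likewise uses Lemma \ref{agree} to identify $\mathcal{CT}_{N,w}$ simultaneously as a set of cyclotomic MZV's and as a set of CMPL's at roots of unity, and then concludes by noting that both $\mathcal{CZ}_{N,w}$ and $\mathcal{CL}_{N,w}$ are spanned by this common subset of $K_{N,\infty}$ via Theorems \ref{theorem: weak BD cyclotomic MZV} and \ref{theorem: weak BD CMPL}, respectively. No gap.
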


\subsubsection{An analogue of Brown-Deligne's theorems for CMPL's at roots of unity} \label{sec:strong Brown CMPL}

Following \cite{IKLNDP22} we consider the set $\mathcal{T}_w$ consisting of positive tuples $\fs = (s_1, \dots, s_n)$ of weight $w$ such that $s_1, \dots, s_{n-1} \leq q$ and $s_n <q$, together with the set  $\mathcal{S}_w$ consisting of positive tuples $\fs = (s_1, \dots, s_n)$ of weight $w$ such that $ q \nmid s_i$ for all $i$. We define a map
\begin{equation*}
    \iota \colon \mathcal{S}_w \longrightarrow \mathcal{T}_w
\end{equation*}
as follows: letting $\fs = (s_1, \dots, s_n) \in \mathcal{S}_w$, we express $s_i = h_i q + r_i $ where $0 < r_i < q$ and $h_i \in \mathbb{Z}^{\ge0}$ and put
\begin{equation*}
    \iota(\mathfrak s) = (\underbrace{q, \dots, q}_{\text{$h_1$ times}}, r_1 , \dots, \underbrace{q, \dots, q}_{\text{$h_n$ times}}, r_n).
\end{equation*}
It is clear that this map is a bijection. 

Let $\mathcal{CS}_{N,w}$ denote the set of CMPL's at roots of unity $\Li \begin{pmatrix}
 \fe  \\
\fs  \end{pmatrix}$ such that $\mathfrak s= (s_1, \dots, s_n) \in \mathcal{S}_w$ and $\fe=(\epsilon_1,\dots,\epsilon_n) \in (\Gamma_N)^n$. We note that $\mathcal{CS}_{N,w}$ is smaller than $\mathcal{CT}_{N,w}$. 

\begin{theorem} \label{thm: strong BD CMPL}
The set $\mathcal{CS}_{N,w}$ forms a set of generators for $\mathcal{CL}_{N,w}$.
\end{theorem}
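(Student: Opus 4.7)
The plan is to strengthen the already-known generating set $\mathcal{CT}_{N,w}$ of Theorem \ref{theorem: weak BD CMPL}(2) into $\mathcal{CS}_{N,w}$, by progressively eliminating each occurrence of the value $q$ among the entries of a tuple in $\mathcal{T}_w$. Since a tuple $\fs = (s_1, \dotsc, s_n) \in \mathcal{T}_w$ has all entries $\leq q$ and $s_n < q$, the condition $\fs \notin \mathcal{S}_w$ is precisely that $s_i = q$ for some $i < n$. The task thus reduces to: for every positive array $\begin{pmatrix}\fve\\\fs\end{pmatrix}$ with $\fs \in \mathcal{T}_w \setminus \mathcal{S}_w$, express $\Li\begin{pmatrix}\fve\\\fs\end{pmatrix}$ in the $K_N$-span of $\mathcal{CS}_{N,w}$.

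I would carry out a descending induction on $j(\fs) := \max\{i : s_i = q\}$, with the convention $j(\fs) = 0$ when $\fs \in \mathcal{S}_w$. The base case is vacuous. For the inductive step with $j := j(\fs) \geq 1$, the maximality of $j$ forces $s_{j+1}, \dotsc, s_n \in \{1, \dotsc, q-1\}$. Form the contracted positive array of depth $n-1$
\[
\begin{pmatrix}\fve^{\mathfrak u}\\\mathfrak u\end{pmatrix} := \begin{pmatrix}\varepsilon_1 & \dotsb & \varepsilon_{j-1} & \varepsilon_j\varepsilon_{j+1} & \varepsilon_{j+2} & \dotsb & \varepsilon_n \\ s_1 & \dotsb & s_{j-1} & q+s_{j+1} & s_{j+2} & \dotsb & s_n\end{pmatrix}.
\]
Since $q + s_{j+1} > q$ while every earlier entry is $\leq q$, we have $\Init(\mathfrak u) = (s_1, \dotsc, s_{j-1})$. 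Apply Proposition \ref{polydecom}(1) to $\Li\begin{pmatrix}\fve^{\mathfrak u}\\\mathfrak u\end{pmatrix}$ with $k = j$ and $\varepsilon = \varepsilon_j$: a direct expansion of the character and weight columns shows the type-$1$ output is precisely $-\Li\begin{pmatrix}\fve\\\fs\end{pmatrix}$, and rearranging gives
\[
\Li\begin{pmatrix}\fve\\\fs\end{pmatrix} = -\Li\begin{pmatrix}\fve^{\mathfrak u}\\\mathfrak u\end{pmatrix} + \sum_i b_i \Li\begin{pmatrix}\fe'_i\\\mathfrak t'_i\end{pmatrix} + \sum_i c_i \Li\begin{pmatrix}\fm_i\\\mathfrak u_i\end{pmatrix},
\]
with $b_i, c_i \in A_N$ and the type-$2$, type-$3$ arrays obeying the constraints of Proposition \ref{polydecom}(1).

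To close the induction, I would rewrite each right-hand term as an $A_N$-linear combination of elements of $\mathcal{CT}_{N,w}$ via Theorem \ref{theorem: weak BD CMPL}(1) and verify that every output tuple $\mathfrak v \in \mathcal{T}_w$ satisfies $j(\mathfrak v) < j$. For the $\mathfrak u$-term this holds because the position-$j$ entry $q + s_{j+1}$ is not divisible by $q$ (as $0 < s_{j+1} < q$) and the weak-reduction algorithm, which only splits entries exceeding $q$, never produces a $q$ at position $\geq j$ once every later entry is already strictly less than $q$. For the type-$3$ contributions the lex-strict inequality $\Init(\mathfrak u_i) \succ (s_1, \dotsc, s_{j-1})$ guarantees that the initial segment of length $\geq j$ is untouched by further splitting, again preventing a $q$ at position $\geq j$. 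For type $2$ the strict partial-sum inequalities $t'_1 + \dotsb + t'_\ell < u_1 + \dotsb + u_\ell$ for $\ell \geq j$ force the same conclusion. Hence every CMPL produced is indexed by a tuple with strictly smaller $j$, and the induction closes.

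The main obstacle is the careful audit that the reduction algorithm underlying Theorem \ref{theorem: weak BD CMPL}(1), namely the composition of the binary-relation operators $\mathcal{B}^*$, $\mathcal{C}$, and $\mathcal{BC}$, preserves the invariant ``no $q$ at positions $\geq j$'' when applied to arrays outside $\mathcal{T}_w$ such as $\mathfrak u$ or the type-$2$, type-$3$ outputs. This verification is already done in detail in \cite[\S 1]{IKLNDP22} for $N = q - 1$; the generalization to arbitrary $N$ requires only tracking the multiplications of cyclotomic characters in $\Gamma_N$, which do not affect the partial-sum comparisons that drive the induction.
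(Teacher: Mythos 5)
Your overall strategy (descending induction on $j(\fs) := \max\{i : s_i = q\}$, using a merged tuple $\mathfrak u$ to eliminate the $q$ at position $j$) is a genuinely different route from the paper's. The paper instead works ``from the $\mathcal{S}_w$ side'': for each $(\fe,\fs)\in\mathcal{CT}_{N,w}$, it expands $\Li\begin{pmatrix}\fe'\\\iota^{-1}(\fs)\end{pmatrix}$ (with a well-chosen $\fe'$) via the weak algorithm and records that the resulting transition matrix into $\mathcal{CT}_{N,w}$ is congruent to $\pm\mathrm{Id}$ modulo $D_1$, hence invertible over $K_N$. That one mod-$D_1$ observation is the entire content; no induction on positions of $q$ is needed.

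The key lemma underlying your induction is, however, false, and this is a genuine gap. You claim that re-expanding $\Li\begin{pmatrix}\fve^{\mathfrak u}\\\mathfrak u\end{pmatrix}$ via Theorem~\ref{theorem: weak BD CMPL}(1) ``never produces a $q$ at position $\geq j$.'' But the very first application of Proposition~\ref{polydecom}(1) to $\mathfrak u$ — whose first entry exceeding $q$ sits at position $j$, equal to $q+s_{j+1}$ — has type-1 output
\[
(s_1,\dotsc,s_{j-1},\,q,\,(q+s_{j+1})-q,\,s_{j+2},\dotsc,s_n)=(s_1,\dotsc,s_{j-1},q,s_{j+1},\dotsc,s_n)=\fs,
\]
which has a $q$ at position $j$ and already lies in $\mathcal{T}_w$, so the algorithm stops there. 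In other words, the type-1 splitting \emph{always} reinserts a $q$ at the split position; it does not remove them. The same issue arises for the type-3 contributions: the constraint $\Init(\mathfrak u_i)\succ(s_1,\dotsc,s_{j-1})$ permits $\Init(\mathfrak u_i)=(s_1,\dotsc,s_{j-1},q,\dotsc)$, and the $T_j$-inequality $T_j(\mathfrak u_i)\leq T_j(\mathfrak u)$ allows the $j$-th entry to equal $q$. So after re-expansion the right-hand side of your identity still contains $\Li$'s on the tuple $\fs$ itself (with various characters), and the descending induction on $j$ does not close. Concretely, with the choice $\varepsilon=\varepsilon_j$ the $\mathfrak u$-term re-expands to $-\Li\begin{pmatrix}\fve\\\fs\end{pmatrix}+D_1\cdot(\dotsc)$, so your rearranged identity reduces to a tautology modulo $D_1$.

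What rescues the argument — and what the paper actually uses — is precisely the mod-$D_1$ structure: the coefficients reappearing on $\Li\begin{pmatrix}\fe\\\fs\end{pmatrix}$ for $\fe\neq\fve$ are divisible by $D_1$, and the diagonal coefficient on $\Li\begin{pmatrix}\fve\\\fs\end{pmatrix}$ is $\equiv 1\pmod{D_1}$, so the relevant linear system is of the form $(\mathrm{Id}+D_1M)$, which is invertible. If you want to keep your induction on $j$, you must replace the (false) combinatorial claim ``no $q$ at position $\geq j$'' by this invertibility-mod-$D_1$ argument on the block of characters lying over the fixed tuple $\fs$; once you do that, however, you have essentially reproduced the paper's transition-matrix proof in a more laborious form.
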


\begin{proof}
Letting $\Li \begin{pmatrix}
 \fe  \\
\fs  \end{pmatrix}  \in \mathcal{CT}_{N,w}$, we know that $\mathfrak s \in \mathcal{T}_w$. Thus there exists a unique tuple $\fs' \in \mathcal{S}_w$ such that $\fs = \iota(\fs')$ as $\iota$ is a bijection. By Proposition \ref{polydecom}  and Theorem \ref{theorem: weak BD CMPL}, it follows that there exists a tuple $\fe' \in (\Gamma_N)^{\depth(\fs')} $ such that  $\Li \begin{pmatrix}
 \fe'  \\
\fs'  \end{pmatrix}$ can be expressed as follows:
\begin{equation*}
    \Li \begin{pmatrix}
 \fe'  \\
\fs'  \end{pmatrix}  = \sum  a_{(\fe',\fs'),(\fe,\mathfrak t)} \Li \begin{pmatrix}
 \fe  \\
\mathfrak{t}  \end{pmatrix},
\end{equation*}
where $ \begin{pmatrix}
 \fe  \\
\mathfrak{t}  \end{pmatrix}$ ranges over all elements of $\mathcal{CT}_{N,w}$ and $a_{(\fe',\fs'),(\fe,\mathfrak t)} \in A_N$ satisfying
\begin{equation*}
 a_{(\fe',\fs'),(\fe,\mathfrak t)} \equiv \begin{cases}
			\pm 1  \ (\text{mod } D_1) & \text{if }  \begin{pmatrix}
 \fe  \\
\mathfrak{t}  \end{pmatrix}  =  \begin{pmatrix}
 \fe  \\
\fs  \end{pmatrix},\\
            0 \ \ (\text{mod } D_1) & \text{otherwise}.
		 \end{cases}
\end{equation*}
We observe that $\Li \begin{pmatrix}
 \fve'  \\
\fs'  \end{pmatrix}  \in \mathcal{CS}_{N,w}$. 

It implies immediately that the transition matrix from the set consisting of such $\Li \begin{pmatrix}
 \fe'  \\
\fs'  \end{pmatrix}$ as above to the set consisting of $\Li \begin{pmatrix}
 \fe  \\
\fs  \end{pmatrix}$ with $ \begin{pmatrix}
 \fe  \\
\fs  \end{pmatrix}  \in \mathcal{CT}_{N,w}$ is invertible. The theorem follows.
\end{proof}

Combining Theorems \ref{thm:bridge} and \ref{theorem: weak BD CMPL} yields

\begin{theorem} \label{thm: strong BD MZV}
Let $\mathcal{CS}_{N,w}$ denote the set of CMPL's at roots of unity $\Li \begin{pmatrix}
 \fe  \\
\fs  \end{pmatrix}$ such that $\mathfrak s \in \mathcal{S}_w$. Then the set $\mathcal{CS}_{N,w}$ forms a set of generators for $\mathcal{CZ}_{N,w}$.
\end{theorem}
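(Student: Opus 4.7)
The plan is to obtain this statement as an immediate corollary of the two preceding results, namely Theorem~\ref{thm:bridge} and Theorem~\ref{thm: strong BD CMPL}, without introducing any new machinery. No further combinatorial analysis of the arrays or additional operator manipulation should be required, since all the substantive work has already been carried out.

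First, I would invoke Theorem~\ref{thm: strong BD CMPL}, which asserts that every CMPL at a root of unity of weight $w$ can be written as a $K_N$-linear combination of elements in $\mathcal{CS}_{N,w}$. In other words, $\mathcal{CS}_{N,w}$ is a spanning set for the $K_N$-vector space $\mathcal{CL}_{N,w}$. The proof of that theorem already contains the key reduction: for each admissible $\Li\begin{pmatrix}\fe\\\fs\end{pmatrix} \in \mathcal{CT}_{N,w}$ one identifies (via the bijection $\iota \colon \mathcal{S}_w \to \mathcal{T}_w$) a unique tuple $\fs' \in \mathcal{S}_w$ with $\iota(\fs') = \fs$, and the transition matrix between the two generating families is congruent to a permutation matrix modulo $D_1$, hence invertible over $A_N \subset K_N$.

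Second, I would apply Theorem~\ref{thm:bridge}, which states the (a priori non-obvious) equality of $K_N$-vector spaces
\[
\mathcal{CZ}_{N,w} = \mathcal{CL}_{N,w}.
\]
This identification rests on the weak analogues of Brown--Deligne's theorems in both contexts (Theorems~\ref{theorem: weak BD cyclotomic MZV} and~\ref{theorem: weak BD CMPL}), together with the coincidence $\zeta_A\begin{pmatrix}\fe\\\fs\end{pmatrix} = \Li\begin{pmatrix}\fe\\\fs\end{pmatrix}$ on the common generating set $\mathcal{CT}_{N,w}$ provided by Lemma~\ref{agree}. Combining the two theorems, any cyclotomic MZV of weight $w$ lies in $\mathcal{CL}_{N,w}$, hence is a $K_N$-linear combination of elements of $\mathcal{CS}_{N,w}$, and the conclusion follows.

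Since the statement is a pure formal consequence of two already-established results, there is no real obstacle at this stage; the whole difficulty has been absorbed into the proofs of Theorems~\ref{thm:bridge} and~\ref{thm: strong BD CMPL}, and in particular into Proposition~\ref{polydecom}, where the control on type~1, type~2 and type~3 terms together with the $D_1$-divisibility of the coefficients ensures the invertibility of the relevant transition matrix. The proof itself can therefore be written in a single short paragraph: ``By Theorem~\ref{thm: strong BD CMPL}, $\mathcal{CS}_{N,w}$ spans $\mathcal{CL}_{N,w}$; by Theorem~\ref{thm:bridge}, $\mathcal{CL}_{N,w} = \mathcal{CZ}_{N,w}$; hence $\mathcal{CS}_{N,w}$ spans $\mathcal{CZ}_{N,w}$.''
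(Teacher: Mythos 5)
Your proof is correct and is exactly the intended argument: by Theorem~\ref{thm: strong BD CMPL} the set $\mathcal{CS}_{N,w}$ spans $\mathcal{CL}_{N,w}$, and by Theorem~\ref{thm:bridge} one has $\mathcal{CL}_{N,w}=\mathcal{CZ}_{N,w}$, giving the result in one line. Note that the paper's preamble to this theorem cites Theorem~\ref{theorem: weak BD CMPL} rather than Theorem~\ref{thm: strong BD CMPL}, which appears to be a misprint since only the latter reduces the generating set from $\mathcal{CT}_{N,w}$ to $\mathcal{CS}_{N,w}$; your version uses the correct reference.
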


We easily see that one has an effective algorithm for expressing any cyclotomic multiple zeta value as a linear combination of $\mathcal{CZ}_{N,w}$ as opposed to the classical setting (see \cite{Bro12b,Del10,Del13} for more explanations).


\section{Dual $t$-motives connected to CMPL's at roots of unity} \label{sec: dual motives}

\subsection{Dual $t$-motives} \ppar

We recall the notion of dual $t$-motives due to Anderson and refer the reader to \cite{And86} for the related notion of $t$-motives. For $i \in \mathbb Z$ we consider the $i$-fold twisting of $\C_\infty((t))$ defined by
\begin{align*}
\C_\infty((t)) & \rightarrow \C_\infty((t)) \\
f=\sum_j a_j t^j & \mapsto f^{(i)}:=\sum_j a_j^{q^i} t^j.
\end{align*}
We extend $i$-fold twisting to matrices with entries in $\C_\infty((t))$ by twisting entry-wise.

Let $\overline K[t,\sigma]$ be the non-commutative $\overline K[t]$-algebra generated by
the new variable $\sigma$ subject to the relation $\sigma f=f^{(-1)}\sigma$ for all $f \in \overline K[t]$.

\begin{definition}
An effective dual $t$-motive is a $\overline K[t,\sigma]$-module $\mathcal M'$ which is free and finitely generated over $\overline K[t]$ such that for $\ell\gg 0$ we have
	\[(t-\theta)^\ell(\mathcal M'/\sigma \mathcal M') = \{0\}.\]
\end{definition}

We mention that effective dual $t$-motives are called Frobenius modules in \cite{CPY19,Har21,KL16} (see also \cite[\S 4]{HJ20} for a more general notion of dual $t$-motives). Throughout this paper we will always work with effective dual $t$-motives. Therefore, we will sometimes drop the word ``effective" where there is no confusion.

Letting $\mathcal M$ be a dual $t$-motive, we say that it is represented by a matrix $\Phi \in \Mat_r(\overline K[t])$ if $\mathcal M$ is a $\overline K[t]$-module free of rank $r$ and the action of $\sigma$ is represented by the matrix $\Phi$ on a given  $\overline K[t]$-basis for $\mathcal M$. We say that $\mathcal M$ is uniformizable or rigid analytically trivial if there exists a matrix $\Psi \in \text{GL}_r(\bT)$ satisfying $\Psi^{(-1)}=\Phi \Psi$. The matrix $\Psi$ is called a rigid analytic trivialization of $\mathcal M$.

We now recall the Anderson-Brownawell-Papanikolas criterion which is crucial in the sequel (see \cite[Theorem 3.1.1]{ABP04}). The curious reader might compare it with Beukers' result \cite{Beu06} in the classical setting.

\begin{theorem}[Anderson-Brownawell-Papanikolas] \label{thm:ABP}
Let $\Phi \in \Mat_\ell(\overline K[t])$ be a matrix such that $\det \Phi=c(t-\theta)^s$ for some $c \in \overline K$ and $s \in \mathbb Z^{\geq 0}$. Let $\psi \in \Mat_{\ell \times 1}(\mathcal E)$ (see \S \ref{sec: notation} for $\mathcal E$) be a vector  satisfying $\psi^{(-1)}=\Phi \psi$ and $\rho \in \Mat_{1 \times \ell}(\overline K)$ such that $\rho \psi(\theta)=0$. Then there exists a vector $P \in \Mat_{1 \times \ell}(\overline K[t])$ such that
	\[ P \psi=0 \quad \text{and} \quad P(\theta)=\rho. \]
\end{theorem}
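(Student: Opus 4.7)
The plan is to treat this as a transcendence statement and run an auxiliary-polynomial argument, where the functional equation $\psi^{(-1)} = \Phi\psi$ supplies the propagation of vanishing that derivation provides in the classical Beukers-style setting. The goal is to construct $P$ by a Siegel-type pigeonhole, then force global vanishing of $P\psi \in \mathcal E$ from vanishing at sufficiently many specific points, arranged to match the prescribed value $P(\theta) = \rho$.

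The first observation is that the submodule of polynomial vectors annihilating $\psi$ is stable under the twisted Frobenius $Q \mapsto Q^{(-1)}\Phi$: if $Q\psi = 0$ then $(Q\psi)^{(-1)} = Q^{(-1)}\psi^{(-1)} = Q^{(-1)}\Phi\psi = 0$. Iterating $\psi^{(-1)} = \Phi\psi$ upward gives, for each $k \geq 1$, the identity $\psi = \Phi^{(1)}\Phi^{(2)}\cdots\Phi^{(k)}\,\psi^{(k)}$ in $\Mat_{\ell \times 1}(\mathcal E)$. Because $\det \Phi = c(t-\theta)^s$, the twisted determinants $\det \Phi^{(i)} = c^{q^i}(t-\theta^{q^i})^s$ do not vanish at $t=\theta$ for any $i \geq 1$, so the connection is invertible upon evaluation at $\theta$ and each $\psi^{(k)}(\theta)$ is an invertible linear function of $\psi(\theta)$.

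I would then produce $P$ as follows. Fix integers $N, M \gg 0$ and consider the affine $\overline K$-subspace $V_N \subset \overline K[t]^\ell$ of polynomial vectors of degree less than $N$ with $P(\theta) = \rho$, which has $\overline{K}$-dimension on the order of $\ell N$. Impose the further linear conditions $(P\psi)(\theta^{q^i}) = 0$ for $1 \leq i \leq M$: by the iterated formula above these translate, modulo the fixed constraint $\rho\psi(\theta) = 0$, into $M$ explicit linear conditions on the coefficients of $P$, with heights controlled by the growth of the $\Phi^{(i)}$ and of $\psi$. For $N$ large relative to $M$, the function-field Siegel lemma yields a nonzero $P \in V_N$ satisfying these $M$ forced vanishings.

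The decisive final step is to pass from vanishing at the sparse geometric sequence $\theta, \theta^q, \theta^{q^2}, \dots$ to the global identity $P\psi \equiv 0$. Because $\psi$ has \emph{entire} entries (membership in $\mathcal E$ rather than only in $\bT$), the vector $P\psi$ is itself entire with slow coefficient growth, and the defining property of $\mathcal E$ together with a Jensen-style estimate on $\mathbb{C}_\infty$ forces any element of $\mathcal E$ with zeros at a sequence escaping to $\infty$ as quickly as $|\theta^{q^i}|_\infty$ to vanish identically. The hard part is the quantitative balance in the Siegel step: one must show that the height of the system of $M$ conditions grows slowly enough in $M$, which in turn requires sharp upper bounds on $\|\Phi^{(i)}(\theta)\cdots\Phi^{(1)}(\theta)\|$ and a matching lower bound on the denominators, both coming ultimately from the hypothesis $\det \Phi = c(t-\theta)^s$. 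Calibrating $N$ and $M$ so that Siegel's lemma gives a non-trivial $P$ while the resulting entire function has too many zeros to survive is the technical heart of the argument.
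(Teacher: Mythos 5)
The paper does not prove this statement: it is quoted verbatim from \cite[Theorem 3.1.1]{ABP04} and used as a black box. So the relevant comparison is with the original Anderson--Brownawell--Papanikolas argument, not with anything in this manuscript.

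Your outline has the right general shape (auxiliary function $+$ functional--equation propagation of vanishing $+$ zero estimate), and the observation that $\Phi^{(i)}(\theta)$ is invertible for $i\geq 1$ because $\det\Phi^{(i)}=c^{q^i}(t-\theta^{q^i})^s$ is correct and is indeed used in ABP. But there are two genuine gaps, one of which is fatal as stated.

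The fatal one is the zero estimate. You claim that ``the defining property of $\mathcal E$ together with a Jensen-style estimate forces any element of $\mathcal E$ with zeros at a sequence escaping to $\infty$ as quickly as $|\theta^{q^i}|_\infty$ to vanish identically.'' This is false: the function $\Omega(t)=(-\theta)^{-q/(q-1)}\prod_{i\geq 1}\bigl(1-t/\theta^{q^i}\bigr)$ lies in $\mathcal E$ (its coefficients satisfy $\sqrt[n]{|a_n|_\infty}\to 0$ and generate a finite extension of $K_\infty$), is nonzero, and vanishes at every $\theta^{q^i}$ with $i\geq 1$. So vanishing along that geometric sequence alone tells you nothing. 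The actual ABP zero estimate works with high \emph{order} of vanishing at a single well-chosen point, which then propagates via the functional equation $E^{(-1)}=(\text{something})\cdot E$ at that point, and the contradiction comes from a quantitative comparison between the total order of vanishing in a disk of controlled radius and the degree/growth bound on the auxiliary function. Merely having one simple zero at each $\theta^{q^i}$ is nowhere near enough.

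The secondary gap is the Siegel lemma step. You want to impose the conditions $(P\psi)(\theta^{q^i})=0$ as a linear system in the coefficients of $P\in\overline K[t]^\ell$ and solve it by pigeonhole ``with heights controlled.'' But the coefficients of that system are the numbers $\psi_j(\theta^{q^i})\in\mathbb C_\infty$, which are precisely the quantities whose transcendence properties the ABP criterion is supposed to settle; there is no a priori height bound on them, and Siegel's lemma in the function-field setting requires a system over a ring where coefficients are algebraic with controlled size. ABP avoid this by performing the box-principle step over a finite residue field (so it is a counting argument, not a height argument), by arranging the auxiliary vanishing at a Frobenius-fixed point rather than at the $\theta^{q^i}$, and by extracting the specific normalization $P(\theta)=\rho$ only at the end via a separate linear-algebra step on the $\overline K$-span of polynomial relations killing $\psi$, using the hypothesis $\rho\,\psi(\theta)=0$. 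Your sketch collapses these into one step in a way that does not go through.
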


To end this section we recall $R=[k_N:k]$ and mention that by replacing $\Fq$ (resp. $\sigma$) by $\mathbb F_{q^R}$ (resp. $\sigma^R$) one gets a theory of dual $t$-motives of level $R$ (see \cite{CPY19,Har23} for more details).

\subsection{Dual $t$-motives connected to CMPL's at roots of unity} \label{sec:CMPL motives}  \ppar

We recall that as in \S \ref{sec: notation}, $\widetilde \pi$ denotes the Carlitz period  which is a fundamental period of the Carlitz module (see \cite{Gos96, Tha04}) and $\Omega$ denotes the Anderson-Thakur function introduced in \cite{AT90}. 

Let $\fs=(s_1,\ldots,s_r) \in \mathbb N^r$ be a tuple and $\fe=(\epsilon_1,\ldots,\epsilon_r) \in (\Gamma_N)^r$.  For all $1 \leq i \leq r$ we fix a $(q^R-1)$th root $\mu_i$ of $\epsilon_i^R \in k_N^\times$. Following Harada \cite{Har23} we consider the untwisted $L$-series
\begin{align} \label{eq: series L Harada}
\frakL^{\text{un}}(\fs;\fe):=\sum_{i_1 > \dots > i_r \geq 0} \epsilon_1^{i_1} (\Omega^{s_1})^{(i_1)} \dots \epsilon_r^{i_r} (\Omega^{s_r})^{(i_r)}.
\end{align}

\begin{remark}
We mention that these series are different from those that appeared in our previous works on MZV's and AMZV's \cite{ND21,IKLNDP22} which are twisted series defined by Chang \cite{Cha14}.
\end{remark}

 It can be proved that $\frakL^{\text{un}}(\fs,\fe) \in \mathcal E$ (the proof follows the same line as in \cite[Lemma 4.2]{Har23}).  In the sequel, we will use the following property of this series (see \cite[Lemma 4.4]{Har23}): for all $j \in \mathbb Z^{\geq 0}$ divisible by $R$, we have
\begin{align} 
\frakL^{\text{un}}(\fs;\fe)(\theta) &=\frac{\Li \begin{pmatrix}
\fe \\ \fs
\end{pmatrix}}{\widetilde \pi^{w(\fs)}}, \label{eq: CMPL at roots of unity v0} \\
\frakL^{\text{un}}(\fs;\fe)(\theta^{q^j}) &=(\epsilon_1 \dots \epsilon_r)^j  (\frakL^{\text{un}}(\fs;\fe)(\theta))^{q^j}. \label{eq: CMPL at roots of unity 2 v0}
\end{align} 
We note that in the proof of \cite[Lemma 4.4]{Har23}, we need the equalities $\epsilon_i^{(j)}=\epsilon_i$ which follow from the fact that $j$ divisible by $R$.

Further,
\begin{align*}
& \frakL^{\text{un}}(s_1,\ldots,s_r;\epsilon_1,\ldots,\epsilon_r)^{(-1)} =(\epsilon_1 \dots \epsilon_r)^{(-1)} \frakL^{\text{un}}(s_1,\ldots,s_r;\epsilon_1^{(-1)},\ldots,\epsilon_r^{(-1)}) \\
& \hspace{2cm} +(\epsilon_1 \dots \epsilon_{r-1})^{(-1)} (t-\theta)^{s_r} \Omega^{s_r} \frakL^{\text{un}}(s_1,\ldots,s_{r-1};\epsilon_1^{(-1)},\ldots,\epsilon_{r-1}^{(-1)}).
\end{align*}
As in \cite[Lemma 3.3]{Har23}, it follows that for all $j \in \mathbb N$,
\begin{align*}
& \frakL^{\text{un}}(s_1,\ldots,s_r;\epsilon_1,\ldots,\epsilon_r)^{(-j)} =((\epsilon_1 \dots \epsilon_r)^j)^{(-j)} \sum_{i=0}^r \Omega^{s_{i+1}+\dots+s_r} \\
& \quad \times \sum_{0<j_{i+1}<\dots<j_r \leq j} T_{s_{i+1},j_{i+1}}((\epsilon_{i+1})^{(-j)}) \dots T_{s_r,j_r}((\epsilon_r)^{(-j)}) \frakL^{\text{un}}(s_1,\ldots,s_i;\epsilon_1^{(-j)},\ldots,\epsilon_i^{(-j)}).
\end{align*}
Here for all $s \in \mathbb N$, $k \in \mathbb N$ and $\epsilon \in \Gamma_N$, we put
\begin{align} \label{eq: Tsk}
T_{s,k} &:= (t-\theta)^s ((t-\theta)^s)^{(-1)} \dots ((t-\theta)^s)^{(-(k-1))}, \\
T_{s,k}(\epsilon) &:=\epsilon^{-k} T_{s,k} =\epsilon^{-k} (t-\theta)^s ((t-\theta)^s)^{(-1)} \dots ((t-\theta)^s)^{(-(k-1))}. \notag
\end{align}
In particular, when $j=R$, since $\epsilon_i^{(-R)}=\epsilon_i$, we obtain
\begin{align*}
& \frakL^{\text{un}}(s_1,\ldots,s_r;\epsilon_1,\ldots,\epsilon_r)^{(-R)} =(\epsilon_1 \dots \epsilon_r)^R \sum_{i=0}^r \Omega^{s_{i+1}+\dots+s_r} \\
& \quad \times \sum_{0<j_{i+1}<\dots<j_r \leq R} T_{s_{i+1},j_{i+1}}(\epsilon_{i+1}) \dots T_{s_r,j_r}(\epsilon_r) \frakL^{\text{un}}(s_1,\ldots,s_i;\epsilon_1,\ldots,\epsilon_i).
\end{align*}

We consider the dual $t$-motives $\mathcal M_{\fs,\fe}$ and $\mathcal M_{\fs,\fe}'$ attached to $(\fs,\fe)$ given by the matrices
\begin{align*}
&\Phi_{\fs,\fe} \\
&=
\begin{pmatrix} 
T_{s_1+\dots+s_r,R} & 0 & 0 & \dots & 0 \\
\mu_1 T_{s_2+\dots+s_r,R} \sum_{j_1=1}^R T_{s_1,j_1}(\epsilon_1) & T_{s_2+\dots+s_r,R} & 0 & \dots & 0 \\
\vdots & \mu_2 T_{s_3+\dots+s_r,R} \sum_{j_2=1}^R T_{s_2,j_2}(\epsilon_2) & \ddots & & \vdots \\
\vdots & & \ddots &T_{s_r,R} & 0 \\
{\displaystyle \mu_1 \dots \mu_r \hspace{-1em}\sum_{0<j_1<\dots<j_r \leq R}\hspace{-1em} T_{s_1,j_1}(\epsilon_1) \dots T_{s_r,j_r}(\epsilon_r)} & \dots & \dots & \mu_r \sum_{j_r=1}^R T_{s_r,j_r}(\epsilon_r) & 1
\end{pmatrix} \\
&\in \Mat_{r+1}(\overline K[t]),
\end{align*} 
and $\Phi'_{\fs,\fe} \in \Mat_r(\overline K[t])$ which is the upper left $r \times r$ sub-matrix of $\Phi_{\fs,\fe}$. More precisely, the $i$th row of $\Phi_{\fs,\fe}$ is $(\Phi_{i,1},\dots,\Phi_{i,i},0,\dots,0)$ where for $1 \leq k \leq i$,
\begin{align*}
\Phi_{i,k}=\mu_k \dots \mu_{i-1} T_{s_i+\dots+s_r,R} \sum_{0<j_k<\dots<j_{i-1} \leq R} T_{s_k,j_k}(\epsilon_k) \dots T_{s_{i-1},j_{i-1}}(\epsilon_{i-1}). 
\end{align*}

From now on, we write 
\begin{equation} \label{eq: twisted cyclotomic MZV}
L(\fs;\fe):=\mu(\fs;\fe) \frakL^{\text{un}}(\fs;\fe)
\end{equation}
instead of $\mu_1 \dots \mu_r \frakL^{\text{un}}(\fs;\fe)$. It follows from Eqs. \eqref{eq: CMPL at roots of unity v0} and \eqref{eq: CMPL at roots of unity 2 v0} that for all $j \in \mathbb Z^{\geq 0}$ divisible by $R$, we have
\begin{align}
L(\fs;\fe)(\theta) &=\mu_1 \dots \mu_r \frac{\Li \begin{pmatrix}
\fe \\ \fs
\end{pmatrix}}{\widetilde \pi^{w(\fs)}}, \label{eq: CMPL at roots of unity} \\
L(\fs;\fe)(\theta^{q^j}) &= (L(\fs;\fe)(\theta))^{q^j}. \label{eq: CMPL at roots of unity 2}
\end{align} 
Then the matrix given by	
\begin{align*}
& \Psi_{\fs,\fe} \\
&=
\begin{pmatrix}
\Omega^{s_1+\dots+s_r} & 0 & 0 & \dots & 0 \\
\mu_1 \frakL^{\text{un}}(s_1;\epsilon_1) \Omega^{s_2+\dots+s_r} & \Omega^{s_2+\dots+s_r} & 0 & \dots & 0 \\
\vdots & \mu_2 \frakL^{\text{un}}(s_2;\epsilon_2) \Omega^{s_3+\dots+s_r} & \ddots & & \vdots \\
\vdots & & \ddots & \ddots & \vdots \\
\mu_1 \dots \mu_{r-1} \frakL^{\text{un}}(s_1,\dots,s_{r-1};\epsilon_1,\dots,\epsilon_{r-1}) \Omega^{s_r}  & \mu_2 \dots \mu_{r-1} \frakL^{\text{un}}(s_2,\dots,s_{r-1};\epsilon_2,\dots,\epsilon_{r-1}) \Omega^{s_r} & \dots & \Omega^{s_r}& 0 \\
\mu_1 \dots \mu_r \frakL^{\text{un}}(s_1,\dots,s_r;\epsilon_1,\dots,\epsilon_r)  & \mu_2 \dots \mu_r \frakL^{\text{un}}(s_2,\dots,s_r;\epsilon_2,\dots,\epsilon_r) & \dots & \mu_r \frakL^{\text{un}}(s_r;\epsilon_r) & 1
\end{pmatrix} \\
&=
\begin{pmatrix}
\Omega^{s_1+\dots+s_r} & 0 & 0 & \dots & 0 \\
L(s_1;\epsilon_1) \Omega^{s_2+\dots+s_r} & \Omega^{s_2+\dots+s_r} & 0 & \dots & 0 \\
\vdots & L(s_2;\epsilon_2) \Omega^{s_3+\dots+s_r} & \ddots & & \vdots \\
\vdots & & \ddots & \ddots & \vdots \\
L(s_1,\dots,s_{r-1};\epsilon_1,\dots,\epsilon_{r-1}) \Omega^{s_r}  & L(s_2,\dots,s_{r-1};\epsilon_2,\dots,\epsilon_{r-1}) \Omega^{s_r} & \dots & \Omega^{s_r}& 0 \\
L(s_1,\dots,s_r;\epsilon_1,\dots,\epsilon_r)  & L(s_2,\dots,s_r;\epsilon_2,\dots,\epsilon_r) & \dots & L(s_r;\epsilon_r) & 1
\end{pmatrix} \\
&\in \text{GL}_{r+1}(\bT)
\end{align*}
satisfies
	\[ \Psi_{\fs,\fe}^{(-R)}=\Phi_{\fs,\fe} \Psi_{\fs,\fe}. \]
Thus $\Psi_{\fs,\fe}$ is a rigid analytic trivialization associated to the dual $t$-motive $\mathcal M_{\fs,\fe}$ of level $R$. We also denote by $\Psi_{\fs,\fe}'$ the upper $r \times r$ sub-matrix of $\Psi_{\fs,\fe}$. It is clear that $\Psi_\fs'$ is a rigid analytic trivialization associated to the dual $t$-motive $\mathcal M_{\fs,\fe}'$.

Further, combined with Eqs. \eqref{eq: CMPL at roots of unity} and \eqref{eq: CMPL at roots of unity 2} (see \cite[Lemma 4.4]{Har23}), the above construction of dual $t$-motives implies that $L(\fs;\fe)(\theta)$ has the MZ (multizeta) property in the sense of \cite[Definition 3.4.1]{Cha14}. By \cite[Theorem 4.7 and Lemma 4.11]{Har23} (see \cite[Proposition 4.3.1]{Cha14} for the MZV's case), we get

\begin{proposition} \label{prop: MZ property}
Let $(\fs_i;\fe_i)$ be as before for $1 \leq i \leq m$. We suppose that all the tuples of positive integers $\fs_i$ have the same weight, say $w$. Then the following assertions are equivalent:
\begin{itemize}
\item[i)] $L(\fs_1;\fe_1)(\theta),\dots,L(\fs_m;\fe_m)(\theta)$ are $K_N$-linearly independent.

\item[ii)] $L(\fs_1;\fe_1)(\theta),\dots,L(\fs_m;\fe_m)(\theta)$ are $\overline K$-linearly independent.
\end{itemize}
\end{proposition}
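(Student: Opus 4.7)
The direction (ii) $\Rightarrow$ (i) is automatic since $K_N \subset \overline K$. For the converse, I would follow the classical template for the MZ property due to Chang \cite[Proposition 4.3.1]{Cha14}, promoted to the level-$R$ setting by Harada \cite[Lemma 4.11]{Har23}.

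First, assume a nontrivial $\overline K$-linear relation $\sum_i c_i V_i = 0$, where $V_i := L(\fs_i;\fe_i)(\theta)$. Since all tuples $\fs_i$ share the common weight $w$, the identity $\Omega(\theta)^w = \widetilde\pi^{-w}$ lets me multiply through by $\widetilde\pi^w$ and reinterpret the relation, via Eq. \eqref{eq: CMPL at roots of unity}, as an identity of the form $\rho\,\psi(\theta) = 0$, where $\psi$ is the column vector obtained by stacking suitable first columns of the $\Psi_{\fs_i,\fe_i}$ and $\rho \in \overline K^{1\times n}$ is built from the $c_i$. By construction $\psi^{(-R)} = \Phi\psi$ for the block-diagonal matrix $\Phi := \bigoplus_i \Phi_{\fs_i,\fe_i}$ of the aggregated dual $t$-motive $\mathcal M := \bigoplus_i \mathcal M_{\fs_i,\fe_i}$.

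The Anderson-Brownawell-Papanikolas criterion (Theorem \ref{thm:ABP}) then produces $P \in \overline K[t]^{1\times n}$ with $P(\theta) = \rho$ and $P\psi = 0$ in $\mathcal E$. The next step is to descend $P$ from $\overline K[t]$ to $K_N[t]$ via a Galois argument: since two rigid analytic trivializations of a level-$R$ dual $t$-motive differ by a constant matrix in $\mathrm{GL}_n(\mathbb F_{q^R}(t)) = \mathrm{GL}_n(k_N(t))$ (the fixed ring of the $R$-fold Frobenius twist), the space of polynomial annihilators of $\psi$ is stable under the natural action of $\mathrm{Gal}(\overline K/K_N)$. Averaging $\lambda P$ over this group for a judicious scalar $\lambda \in \overline K$ (ensuring a nonzero trace on the first nonvanishing component of $\rho$) yields a nonzero $Q \in K_N[t]^{1\times n}$ with $Q\psi = 0$; specializing at $t = \theta$ gives a nontrivial $K_N$-linear relation among the $V_i$, contradicting (i).

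The main technical obstacle lies in the descent step, complicated by the auxiliary $(q^R-1)$th roots $\mu_i$ of $\epsilon_i^R$ which, since $k_N^\times = \mathbb F_{q^R}^\times$ has exponent $q^R-1$, generally lie outside $k_N$. I expect that one circumvents this by first formulating everything in terms of Harada's untwisted series $\frakL^{\mathrm{un}}$ (for which the associated $\Phi$-matrix has entries genuinely in $K_N[t]$), performing the Galois descent there, and only reintroducing the $\mu_i$ at the end as a common rescaling. The level-$R$ framework is essential in exactly this place: it ensures that the rigidity of rigid analytic trivializations keeps the descent constants in $k_N(t)$ rather than in some larger $\mathbb F_{q^{Rs}}(t)$, which is what allows the averaged polynomial $Q$ to actually land in $K_N[t]$.
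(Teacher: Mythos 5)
The paper does not give an independent proof of this proposition: it observes that, by the construction of \S\ref{sec:CMPL motives} together with Eqs.~\eqref{eq: CMPL at roots of unity} and~\eqref{eq: CMPL at roots of unity 2}, each $L(\fs_i;\fe_i)(\theta)$ has Chang's MZ property, and then invokes \cite[Theorem 4.7, Lemma 4.11]{Har23} (and \cite[Proposition 4.3.1]{Cha14} for $R=1$). Your outline agrees with those cited proofs up through the application of the ABP criterion, but the mechanism that forces the coefficients into $K_N$ is different there: one normalizes $P$, shows by specializing at $t=\theta^{q^{jR}}$ and using the equivariance $L(\fs_i;\fe_i)(\theta^{q^{jR}})=L(\fs_i;\fe_i)(\theta)^{q^{jR}}$ of Eq.~\eqref{eq: CMPL at roots of unity 2} that $P=P^{(-R)}\widetilde\Phi$, and then combines \cite[Prop.~2.2.1]{CPY19} with Papanikolas's rigidity of rigid analytic trivializations \cite[\S 4.1.6]{Pap08} to conclude that the ratios of the entries of $P$ lie in $k_N(t)$. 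This is precisely the mechanism in Step~2 of the proof of Theorem~\ref{theorem: linear independence}; no averaging over $\Gal(\overline K/K_N)$ occurs anywhere.

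Your Galois-descent step is a genuine gap. The claim that the space of polynomial annihilators of $\psi$ is $\Gal(\overline K/K_N)$-stable presupposes that $\sigma$ acts coefficient-wise on the entire functions forming $\psi$ compatibly with $P\psi=0$. But those coefficients lie in a finite extension of $K_{N,\infty}$ inside $\C_\infty$, and a general $\sigma\in\Gal(\overline K/K_N)$ is not continuous for the $\infty$-adic topology: only the decomposition subgroup at $\infty$ acts on $\C_\infty$, so $\psi^\sigma$ is not even an element of $\bT$ for generic $\sigma$, and Papanikolas's rigidity (which is a statement about rigid analytic trivializations in $\mathrm{GL}_n(\bT)$) cannot be applied to relate $\psi^\sigma$ to $\psi$. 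Moreover the fixed field of the decomposition subgroup in $\overline K$ is strictly larger than $K_N$, so averaging over it would not land $Q$ in $K_N[t]$ anyway. Finally, your proposed workaround via $\frakL^{\text{un}}$ does not repair this: the entries of $\Phi_{\fs,\fe}$ involve $T_{s,k}$, whose coefficients lie in $\Fq[\theta^{1/q^{k-1}}]$ and hence outside $K_N[t]$ (they are Galois-invariant but not in $K_N$), and the untwisted series satisfies only the character-twisted relation of Eq.~\eqref{eq: CMPL at roots of unity 2 v0}, which destroys exactly the equivariance that the specialization argument needs. The $\mu_i$-normalization of Eq.~\eqref{eq: twisted cyclotomic MZV} exists precisely to cancel that character factor and cannot be "reintroduced at the end".
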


We end this section by mentioning that Harada \cite{Har23} also proved analogue of Goncharov's conjecture for cyclotomic MZV's, generalizing \cite{Cha14} for the MZV's case.

\subsection{Dual $t$-motives and linear combinations of CMPL's at roots of unity} \ppar \label{sec: linear combination CMPL motives}

Let $w \in \N$ be a positive integer. Let $\{(\fs_i;\fe_i)\}_{1 \leq i \leq n}$ be a collection of pairs as above such that $\fs_i$ has weight $w$. Let $\{(a_i)\}_{1 \leq i \leq n}$ be elements in $K_N$ and we set  $a_i(t):=a_i \rvert_{\theta=t} \in k_N[t]$. We write $\fs_i=(s_{i1},\dots,s_{i \ell_i}) \in \mathbb N^{\ell_i}$ and $\fe_i=(\epsilon_{i1},\dots,\epsilon_{i\ell_i}) \in (\Gamma_N)^{\ell_i}$ so that $s_{i1}+\dots+s_{i \ell_i}=w$. We introduce the set of tuples
	\[ I(\fs_i;\fe_i):=\{\emptyset,(s_{i1};\epsilon_{i1}),\dots,(s_{i1},\dots,s_{i (\ell_i-1)};\epsilon_{i1},\dots,\epsilon_{i(\ell_i-1)})\},\]
and set
	\[ I:=\cup_i I(\fs_i;\fe_i). \]

For all $(\frak t;\fe) \in I$, we set
\begin{equation} \label{eq:f}
f_{\mathfrak t,\fe}:= \sum_i a_i(t) L(s_{i(k+1)},\dots,s_{i \ell_i};\epsilon_{i(k+1)},\dots,\epsilon_{i \ell_i}),
\end{equation}
where the sum runs through the set of indices $i$ such that $(\mathfrak t;\fe)=(s_{i1},\dots,s_{i k};\epsilon_{i1},\dots,\epsilon_{i k})$ for some $0 \leq k \leq \ell_i-1$. In particular, $f_{\emptyset}= \sum_i a_i(t) L(\fs_i;\fe_i)$.

We now generalize some results of \cite[\S 4 and \S 5]{ND21} and also \cite[Theorem 2.4]{IKLNDP22} to study the linear dependence of CMPL's at roots of unity. The key point is a construction of dual $t$-motives connected to linear combinations of CMPL's at roots of unity. The proof follows the same line as the above. We write it down for completeness.

\begin{theorem} \label{theorem: linear independence}
We keep the above notation. We suppose further that $\{(\fs_i;\fe_i)\}_{1 \leq i \leq n}$ satisfies the following conditions:
\begin{itemize}
\item[(LW)] (for Lower Weights) For any weight $w'<w$, the values $L(\frak t;\fe)(\theta)$ with $(\frak t;\fe) \in I$ and $w(\frak t)=w'$ are all $K_N$-linearly independent. In particular, $L(\frak t;\fe)(\theta)$ is always nonzero.

\item[(LD)] (for Linear Dependence) There exist $a \in A_N$ and $a_i \in A_N$ for $1 \leq i \leq n$ which are not all zero such that
\begin{equation*}
a+\sum_{i=1}^n a_i L(\fs_i;\fe_i)(\theta)=0.
\end{equation*}
\end{itemize}

Then for all $(\mathfrak t;\fe) \in I$, $f_{\mathfrak t,\fe}(\theta)$ belongs to $K_N$ where $f_{\mathfrak t,\fe}$ is given as in Eq. \eqref{eq:f}.
\end{theorem}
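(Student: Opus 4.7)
The proof strategy follows the lines of \cite[\S 4--5]{ND21} and \cite[Theorem 2.4]{IKLNDP22}, but must be lifted from the level-one to the level-$R$ setting developed in \S \ref{sec:CMPL motives}.

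The first step is to construct a single dual $t$-motive $\widetilde{\mathcal M}$ of level $R$ whose defining matrix $\widetilde \Phi \in \Mat_\ell(\overline K[t])$ is block lower-triangular, assembled from the matrices $\Phi'_{\fs_i,\fe_i}$ along a block diagonal (with a final trailing $1$ on the diagonal), together with an extra bottom row that records the coefficients $a_i(t) \in k_N[t]$. An explicit rigid analytic trivialization $\widetilde \Psi \in \mathrm{GL}_\ell(\bT)$ can then be written down, so that its last column $\psi$ has entries which, up to explicit products of the constants $\mu_j$ and common $\Omega$-factors, are the functions $f_{\mathfrak t,\fe}$ for $(\mathfrak t;\fe) \in I$, with bottom entry equal to $f_\emptyset = \sum_i a_i(t) L(\fs_i;\fe_i)$.

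The second step is to turn hypothesis (LD) into a $\overline K$-linear relation among the entries of $\psi(\theta)$. Using Eq.~\eqref{eq: CMPL at roots of unity} and the identity $\widetilde\pi = \Omega(\theta)^{-1}$, the relation $a + \sum_i a_i L(\fs_i;\fe_i)(\theta) = 0$ translates into a row vector $\rho \in \Mat_{1 \times \ell}(\overline K)$ with $\rho \cdot \psi(\theta) = 0$ whose last entry equals $1$. Applying the Anderson-Brownawell-Papanikolas criterion (Theorem~\ref{thm:ABP}) to $\psi$ produces a polynomial row vector $P \in \Mat_{1 \times \ell}(\overline K[t])$ satisfying $P \psi = 0$ and $P(\theta) = \rho$.

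The third step is a weight descent. Twisting $P \psi = 0$ by $\sigma^R$ and using $\psi^{(-R)} = \widetilde \Phi \psi$ yields a second annihilating vector $P^{(-R)} \widetilde \Phi$ of $\psi$. Taking an appropriate combination with $P$ kills the bottom coordinate and produces a new relation $P' \psi' = 0$, where $\psi'$ only involves entries of strictly smaller weight. Hypothesis (LW) forces the corresponding $K_N$-linear combinations of the $L(\mathfrak t;\fe)(\theta)$ to vanish coefficient-wise, which pins down the relevant entries of $P$ and ultimately yields $f_{\mathfrak t,\fe}(\theta) \in K_N$ for every $(\mathfrak t;\fe) \in I$. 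The main obstacle, to be handled in \S \ref{sec:transcendental part}, is the level-$R$ nature of $\widetilde \Phi$: the off-diagonal entries involve the nested sums $\sum_{0 < j_1 < \dots < j_r \leq R} T_{s_1,j_1}(\epsilon_1) \cdots T_{s_r,j_r}(\epsilon_r)$, so the single-twist descent of \cite{ND21,IKLNDP22} does not apply verbatim; a careful combinatorial bookkeeping of these nested Frobenius twists, combined with the descent on the tower of cyclotomic extensions of $K$ outlined in Step (B3) of the introduction, is what I expect to be the technical heart of the argument, ultimately reducing the problem to a level-one system that can be solved as in our previous work.
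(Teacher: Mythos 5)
Your overall architecture (build a level-$R$ dual $t$-motive from the data, apply Anderson--Brownawell--Papanikolas, then descend on weight using (LW)) matches the paper's, but the sketch has two genuine gaps and one misidentification of where the technical difficulty actually lives.

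The first gap is the construction. You propose a \emph{block-diagonal} assembly of the $\Phi'_{\fs_i,\fe_i}$, whereas the paper \emph{merges} rows and columns along the common index set $I = \cup_i I(\fs_i;\fe_i)$. The merging is not cosmetic. After the twist step one expands a relation of the form
\begin{equation*}
B_0 + \sum_{(\mathfrak t;\fe)\in I} B_{\mathfrak t,\fe}\, L(\mathfrak t;\fe)\,\Omega^{w-w(\mathfrak t)} = 0,
\end{equation*}
and it is crucial that each distinct prefix tuple $(\mathfrak t;\fe)$ appears exactly once, so that specializing at $t=\theta^{q^j}$ and invoking (LW) (via Proposition~\ref{prop: MZ property}) forces each $B_{\mathfrak t,\fe}$ to vanish. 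With a block-diagonal matrix the same $L(\mathfrak t;\fe)(\theta)$ would appear with several independent coefficients (one per block containing that prefix), and (LW) would only give that their \emph{sum} vanishes, which is not enough to conclude $\bg = \bg^{(-R)}\widetilde\Phi$. A related inaccuracy: the ABP vector is the first column of $\Psi$ prepended by $1$, whose entries are $L(\mathfrak t;\fe)\Omega^{w-w(\mathfrak t)}$ for $(\mathfrak t;\fe)\in I$ together with the single $f_\emptyset$ at the bottom; they are \emph{not} the $f_{\mathfrak t,\fe}$ as you assert.

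The second gap is the endgame. After obtaining $\bg = \bg^{(-R)}\widetilde\Phi$, the conclusion $f_{\mathfrak t,\fe}(\theta)\in K_N$ does not follow merely from ``pinning down entries of $P$''. The paper invokes \cite[Prop.~2.2.1]{CPY19} to clear denominators into $k_N[t]$ and \cite[\S 4.1.6]{Pap08} to compare two rigid analytic trivializations of the same $\Phi'$ up to a constant matrix over $k_N(t)$; this produces $\delta\Psi' + b\,\bff = \nu$ with $\nu \in \Mat_{1\times|I|}(k_N[t])$, and specializing gives $\bff(\theta) = \nu(\theta)/b(\theta)$. The vector $\bff=(f_{\mathfrak t,\fe})$ enters only here, as the bottom row of $\Psi$, and this entire step is absent from your proposal. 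Finally, the ``nested-twist combinatorics'' and the ``descent on the cyclotomic tower'' you describe as the technical heart are not part of this theorem's proof: Theorem~\ref{theorem: linear independence} is a direct level-$R$ lift of \cite[\S 4--5]{ND21} proved in \S\ref{sec: linear combination CMPL motives}, while those devices occupy \S\ref{sec:transcendental part} for solving the resulting $\sigma$-linear system (the proof of Theorem~\ref{thm: trans CMPL at roots of unity}), where condition (LW) is itself established by induction.
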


\begin{proof}
The proof will be divided into two steps.

\medskip
\noindent {\bf Step 1.} We first construct a dual $t$-motive connected to linear combinations of CMPL's at roots of unity.

For each pair $(\fs_i;\fe_i)$ we have attached to it a matrix $\Phi_{\fs_i,\fe_i}$. For $\fs_i=(s_{i1},\dots,s_{i \ell_i}) \in \mathbb N^{\ell_i}$ and $\fe_i=(\epsilon_{i1},\dots,\epsilon_{i\ell_i}) \in (\Gamma_N)^{\ell_i}$ we recall
	\[ I(\fs_i;\fe_i)=\{\emptyset,(s_{i1};\epsilon_{i1}),\dots,(s_{i1},\dots,s_{i (\ell_i-1)};\epsilon_{i1},\dots,\epsilon_{(\ell_i-1)})\}, \]
and $I:=\cup_i I(\fs_i;\fe_i)$.

We now construct a new matrix $\Phi'$ by merging the same rows of $\Phi_{\fs_1,\fe_1}',\ldots,\Phi_{\fs_n,\fe_n}'$ as follows. The matrix $\Phi'$ will be a matrix indexed by elements of $I$, say $\Phi'=\left(\Phi'_{(\mathfrak t;\fe),(\mathfrak t';\fe')}\right)_{(\mathfrak t;\fe),(\mathfrak t';\fe') \in I} \in \Mat_{|I|}(\overline K[t])$. For the row which corresponds to the empty pair $\emptyset$ we put
\begin{align*}
\Phi'_{\emptyset,(\mathfrak t';\fe')}=
\begin{cases}
T_{w,R} & \text{if } (\mathfrak t';\fe')=\emptyset, \\
0 & \text{otherwise}.
\end{cases}
\end{align*}
For the row indexed by $(\mathfrak t;\fe)=(s_{i1},\dots,s_{i j};\epsilon_{i1},\dots,\epsilon_{ij})$ for some $i$ and $1 \leq j \leq \ell_i-1$ we put
\begin{align*}
& \Phi'_{(\mathfrak t;\fe),(\mathfrak t';\fe')} \\
&=
\begin{cases}
T_{w-w(\mathfrak t'),R} & \text{if } (\mathfrak t';\fe')=(\mathfrak t;\fe), \\
\mu_{i(k+1)} \dots \mu_{ij} T_{w-w(\mathfrak t),R} \sum^{*} T_{s_{i(k+1)},m_{i(k+1)}}(\epsilon_{i(k+1)}) \dots T_{s_{ij},m_{ij}}(\epsilon_{ij}) & \text{if } (\mathfrak t';\fe')=(s_{i1},\dots,s_{ik};\epsilon_{i1},\dots,\epsilon_{ik}), \\
0 & \text{otherwise}.
\end{cases}
\end{align*}
Here $\sum^* = \sum_{0<m_{i(k+1)}<\dots<m_{ij} \leq R}$.  Note that $\Phi_{\fs_i,\fe_i}'=\left(\Phi'_{(\mathfrak t;\fe),(\mathfrak t';\fe')}\right)_{(\mathfrak t;\fe),(\mathfrak t';\fe') \in I(\fs_i;\fe_i)}$ for all $i$.

We define $\Phi \in \Mat_{|I|+1}(\overline K[t])$ by
\begin{align*}
\Phi=\begin{pmatrix}
\Phi' & 0  \\
\bv & 1
\end{pmatrix} \in \Mat_{|I|+1}(\overline K[t]), \quad \bv=(v_{\mathfrak t,\fe})_{(\mathfrak t;\fe) \in I} \in \Mat_{1 \times|I|}(\overline K[t]),
\end{align*}
where
\begin{align*}
v_{\mathfrak t,\fe} = \sum_i a_i(t) \mu_{i(k+1)} \dots \mu_{i \ell_i} \sum_{0<m_{i(k+1)}<\dots<m_{i \ell_i} \leq R} T_{s_{i(k+1)},m_{i(k+1)}}(\epsilon_{i(k+1)}) \dots T_{s_{i \ell_i},m_{i \ell_i}}(\epsilon_{i \ell_i}).
\end{align*} 
Here the first sum runs through the set of indices $i$ such that $(\mathfrak t;\fe)=(s_{i1},\dots,s_{i k};\epsilon_{i1},\dots,\epsilon_{i k})$ for some $0 \leq k \leq \ell_i-1$.

We now introduce a rigid analytic trivialization matrix $\Psi$ for $\Phi$. We define $\Psi'=\left(\Psi'_{(\mathfrak t;\fe),(\mathfrak t';\fe')}\right)_{(\mathfrak t;\fe),(\mathfrak t';\fe') \in I} \in  \text{GL}_{|I|}(\bT)$ as follows. For the row which corresponds to the empty pair $\emptyset$ we define
\begin{align*}
\Psi'_{\emptyset,(\mathfrak t';\fe')}=
\begin{cases}
\Omega^w & \text{if } (\mathfrak t';\fe')=\emptyset, \\
0 & \text{otherwise}.
\end{cases}
\end{align*}
For the row indexed by $(\mathfrak t;\fe)=(s_{i1},\dots,s_{i j};\epsilon_{i1},\dots,\epsilon_{i j})$ for some $i$ and $1 \leq j \leq \ell_i-1$ we put
\begin{align*}
&\Psi'_{(\mathfrak t;\fe),(\mathfrak t';\fe')}= \\
&
\begin{cases}
L(\mathfrak t;\fe) \Omega^{w-w(\mathfrak t)} & \text{if } (\mathfrak t';\fe')=\emptyset, \\
L(s_{i(k+1)},\dots,s_{ij};\epsilon_{i(k+1)},\dots,\epsilon_{ij}) \Omega^{w-w(\mathfrak t)} & \text{if } (\mathfrak t';\fe')=(s_{i1},\dots,s_{i k};\epsilon_{i1},\dots,\epsilon_{i k}) \text{ for some } 1 \leq k \leq j, \\
0 & \text{otherwise}.
\end{cases}
\end{align*}
Note that $\Psi_{\fs_i,\fe_i}'=\left(\Psi'_{(\mathfrak t;\fe),(\mathfrak t';\fe')}\right)_{(\mathfrak t;\fe),(\mathfrak t';\fe') \in I(\fs_i;\fe_i)}$ for all $i$.

We define $\Psi \in \text{GL}_{|I|+1}(\bT)$ by
\begin{align*}
\Psi=\begin{pmatrix}
\Psi' & 0  \\
\bff & 1
\end{pmatrix} \in \text{GL}_{|I|+1}(\bT), \quad \bff=(f_{\mathfrak t,\fe})_{\mathfrak t \in I} \in \Mat_{1 \times|I|}(\bT).
\end{align*}
Here we recall (see Eq. \eqref{eq:f})
\begin{equation*}
f_{\mathfrak t,\fe}= \sum_i a_i(t) L(s_{i(k+1)},\dots,s_{i \ell_i};\epsilon_{i(k+1)},\dots,\epsilon_{i \ell_i})
\end{equation*}
where the sum runs through the set of indices $i$ such that $(\mathfrak t;\fe)=(s_{i1},\dots,s_{i k};\epsilon_{i1},\dots,\epsilon_{i k})$ for some $0 \leq k \leq \ell_i-1$. In particular, $f_{\emptyset}= \sum_i a_i(t) L(\fs_i;\fe_i)$.

By construction and by \S \ref{sec:CMPL motives}, we get $\Psi^{(-R)}=\Phi \Psi$.

\medskip
\noindent {\bf Step 2.} We apply the Anderson-Brownawell-Papanikolas criterion (see Theorem \ref{thm:ABP}) to the previous construction. 

In fact, we define
\begin{align*}
\widetilde \Phi=\begin{pmatrix}
1 & 0  \\
0 & \Phi
\end{pmatrix} \in  \Mat_{|I|+2}(\overline K[t])
\end{align*}
and consider the vector constructed from the first column vector of $\Psi$
\begin{align*}
\widetilde \psi=\begin{pmatrix}
1 \\
\Psi_{(\mathfrak t;\fe),\emptyset}' \\
f_\emptyset
\end{pmatrix}_{(\mathfrak t;\fe) \in I}.
\end{align*}
Then we have $\widetilde \psi^{(-R)}=\widetilde \Phi \widetilde \psi$.

We also observe that for all $(\mathfrak t;\fe) \in I$ we have $\Psi_{(\mathfrak t;\fe),\emptyset}'=L(\mathfrak t;\fe) \Omega^{w-w(\mathfrak t)}$. Further,
\begin{align*}
a+f_\emptyset(\theta)=a+\sum_i a_i L(\fs_i;\fe_i)(\theta)=0.
\end{align*}
By Theorem \ref{thm:ABP} with $\rho=(a,0,\dots,0,1)$ we deduce that there exists $\bh=(g_0,g_{\mathfrak t,\fe},g) \in \Mat_{1 \times (|I|+2)}(\overline K[t])$ such that $\bh \psi=0$, and that
$g_{\mathfrak t,\fe}(\theta)=0$ for $(\mathfrak t,\fe) \in I$, $g_0(\theta)=a$ and $g(\theta)=1 \neq 0$. If we put $ \bg:=(1/g)\bh \in \Mat_{1 \times (|I|+2)}(\overline K(t))$, then all the entries of $ \bg$ are regular at $t=\theta$.

Now we have
\begin{align} \label{eq: reduction}
( \bg- \bg^{(-R)} \widetilde \Phi) \widetilde \psi= \bg \widetilde \psi-( \bg  \widetilde \psi)^{(-R)}=0.
\end{align}
We write $ \bg- \bg^{(-R)} \widetilde \Phi=(B_0,B_{\mathfrak t},0)_{\mathfrak t \in I}$. We claim that $B_0=0$ and $B_{\mathfrak t,\fe}=0$ for all $(\mathfrak t;\fe) \in I$. In fact, expanding \eqref{eq: reduction} we obtain
\begin{equation} \label{eq: B}
B_0+\sum_{\mathfrak t \in I} B_{\mathfrak t,\fe} L(\mathfrak t;\fe) \Omega^{w-w(\mathfrak t)}=0.
\end{equation}

By \eqref{eq: CMPL at roots of unity 2} we see that for $(\mathfrak t;\fe) \in I$ and $j \in \N$ divisible by $R$,
\begin{equation*}
L(\mathfrak t;\fe)(\theta^{q^j})=(L(\mathfrak t;\fe)(\theta))^{q^j}
\end{equation*}
which is nonzero by Condition (LW).

First, as the function $\Omega$ has a simple zero at $t=\theta^{q^k}$ for $k \in \N$, specializing \eqref{eq: B} at $t=\theta^{q^j}$ yields $B_0(\theta^{q^j})=0$ for $j \geq 1$ divisible by $R$. Since $B_0$ belongs to $\overline K(t)$, it follows that $B_0=0$.

Next, we put $w_0:=\max_{(\mathfrak t;\fe) \in I} w(\mathfrak t)$ and denote by $I(w_0)$ the set of $(\mathfrak t;\fe) \in I$ such that $w(\mathfrak t)=w_0$. Then dividing \eqref{eq: B} by $\Omega^{w-w_0}$ yields
\begin{equation} \label{eq: B1}
\sum_{(\mathfrak t;\fe) \in I} B_{\mathfrak t,\fe} L(\mathfrak t;\fe) \Omega^{w_0-w(\mathfrak t)}=\sum_{(\mathfrak t;\fe) \in I(w_0)} B_{\mathfrak t,\fe} L(\mathfrak t;\fe)+\sum_{(\mathfrak t;\fe) \in I \setminus I(w_0)} B_{\mathfrak t,\fe} L(\mathfrak t;\fe) \Omega^{w_0-w(\mathfrak t)}=0.
\end{equation}
Since each $B_{\mathfrak t,\fe}$ belongs to $\overline K(t)$, they are defined at $t=\theta^{q^j}$ for $j \gg 1$. Note that the function $\Omega$ has a simple zero at $t=\theta^{q^k}$ for $k \in \N$. Specializing \eqref{eq: B1} at $t=\theta^{q^j}$ and using \eqref{eq: CMPL at roots of unity 2} yields
	\[ \sum_{(\mathfrak t;\fe) \in I(w_0)} B_{\mathfrak t,\fe}(\theta^{q^j}) (L(\mathfrak t;\fe)(\theta))^{q^j}=0 \]
for $j \gg 1$ divisible by $R$.

We claim that $B_{\mathfrak t,\fe}(\theta^{q^j})=0$ for $j \gg 1$ divisible by $R$ and for all $(\mathfrak t;\fe) \in I(w_0)$. Otherwise, we get a non-trivial $\overline K$-linear relation among $L(\mathfrak t;\fe)(\theta)$ with $(\frak t;\fe) \in I$ of weight $w_0$. By Proposition \ref{prop: MZ property} we deduce a non-trivial $K_N$-linear relation among $L(\mathfrak t;\fe)(\theta)$ with $(\frak t;\fe) \in I(w_0)$, which contradicts with Condition $(LW)$.
Now we know that $B_{\mathfrak t,\fe}(\theta^{q^j})=0$ for $j \gg 1$  divisible by $R$ and for all $(\mathfrak t;\fe) \in I(w_0)$. Since each $B_{\mathfrak t,\fe}$ belongs to $\overline K(t)$, it follows that
$B_{\mathfrak t,\fe}=0$ for all $(\mathfrak t;\fe) \in I(w_0)$.

Next, we put $w_1:=\max_{(\mathfrak t;\fe) \in I \setminus I(w_0)} w(\mathfrak t)$ and denote by $I(w_1)$ the set of $(\mathfrak t;\fe) \in I$ such that $w(\mathfrak t)=w_1$. Dividing \eqref{eq: B} by $\Omega^{w-w_1}$ and specializing at $t=\theta^{q^j}$ yields
	\[ \sum_{(\mathfrak t;\fe) \in I(w_1)} B_{\mathfrak t,\fe}(\theta^{q^j}) (L(\mathfrak t;\fe)(\theta))^{q^j}=0 \]
for $j \gg 1$. Since $w_1<w$, by Proposition \ref{prop: MZ property} and Condition $(LW)$ again we deduce that $B_{\mathfrak t,\fe}(\theta^{q^j})=0$ for $j \gg 1$  divisible by $R$ and for all $(\mathfrak t;\fe) \in I(w_1)$. Since each $B_{\mathfrak t,\fe}$ belongs to $\overline K(t)$, it follows that
$B_{\mathfrak t,\fe}=0$ for all $(\mathfrak t;\fe) \in I(w_1)$. Repeating the previous arguments we deduce that $B_{\mathfrak t,\fe}=0$ for all $(\mathfrak t;\fe) \in I$ as required.

We have proved that $ \bg- \bg^{(-R)} \widetilde \Phi=0$. Thus
\begin{align*}
\begin{pmatrix}
1 & 0 & 0 \\
0 & \text{Id} & 0 \\
g_0/g & (g_{\mathfrak t,\fe}/g)_{(\mathfrak t;\fe) \in I}  & 1
\end{pmatrix}^{(-R)}
\begin{pmatrix}
1 & 0  \\
0 & \Phi
\end{pmatrix}
= \begin{pmatrix}
1 & 0 & 0 \\
0 & \Phi' & 0 \\
0 & 0 & 1
\end{pmatrix}
\begin{pmatrix}
1 & 0 & 0 \\
0 & \text{Id} & 0 \\
g_0/g & (g_{\mathfrak t,\fe}/g)_{(\mathfrak t;\fe) \in I}  & 1
\end{pmatrix}.
\end{align*}
By \cite[Prop. 2.2.1]{CPY19} we see that the common denominator $b$ of $g_0/g$ and $g_{\mathfrak t,\fe}/g$ for $(\mathfrak t,\fe) \in I$ belongs to $k_N[t] \setminus \{0\}$. If we put $\delta_0=bg_0/g$ and $\delta_{\mathfrak t,\fe}=b g_{\mathfrak t,\fe}/g$ for $(\mathfrak t,\fe) \in I$ which belong to $\overline K[t]$ and $\delta:=(\delta_{\mathfrak t,\fe})_{\mathfrak t \in I} \in \Mat_{1 \times |I|}(\overline K[t])$, then $\delta_0^{(-R)}=\delta_0$ and
\begin{align} \label{eq:equation for delta}
\begin{pmatrix}
\text{Id} & 0 \\
\delta & 1
\end{pmatrix}^{(-R)}
\begin{pmatrix}
\Phi' & 0  \\
b \bv & 1
\end{pmatrix}
= \begin{pmatrix}
 \Phi' & 0 \\
0 & 1
\end{pmatrix}
\begin{pmatrix}
\text{Id} & 0 \\
\delta & 1
\end{pmatrix}.
\end{align}

If we put $X:=\begin{pmatrix}
\text{Id} & 0 \\
\delta & 1
\end{pmatrix}
\begin{pmatrix}
\Psi' & 0  \\
b \bff & 1
\end{pmatrix}$, then $X^{(-R)}=\begin{pmatrix}
 \Phi' & 0 \\
0 & 1
\end{pmatrix} X$. By \cite[\S 4.1.6]{Pap08} there exist $\nu_{\mathfrak t,\fe} \in k_N(t)$ for $(\mathfrak t,\fe) \in I$ such that if we set $\nu=(\nu_{\mathfrak t,\fe})_{(\mathfrak t,\fe) \in I} \in \Mat_{1 \times |I|}(k_N(t))$,
\begin{align*}
X=\begin{pmatrix}
\Psi' & 0 \\
0 & 1
\end{pmatrix}
\begin{pmatrix}
\text{Id} & 0 \\
\nu & 1
\end{pmatrix}.
\end{align*}
Thus the equation $\begin{pmatrix}
\text{Id} & 0 \\
\delta & 1
\end{pmatrix}
\begin{pmatrix}
\Psi' & 0  \\
b \bff & 1
\end{pmatrix}=\begin{pmatrix}
\Psi' & 0 \\
0 & 1
\end{pmatrix}
\begin{pmatrix}
\text{Id} & 0 \\
\nu & 1
\end{pmatrix}$ implies
\begin{equation} \label{eq:nu}
\delta \Psi'+b \bff=\nu.
\end{equation}
The left-hand side belongs to $\bT$, so does the right-hand side. Thus $\nu=(\nu_{\mathfrak t,\fe})_{(\mathfrak t,\fe) \in I} \in \Mat_{1 \times |I|}(k_N[t])$. For any $j \in \N$, by specializing Eq. \eqref{eq:nu} at $t=\theta^{q^j}$ for $j \in \mathbb N$ divisible by $R$ and using Eq. \eqref{eq: CMPL at roots of unity 2} and the fact that $\Omega$ has a simple zero at $t=\theta^{q^j}$ we deduce that
	\[ \bff(\theta)=\nu(\theta)/b(\theta). \]
Thus for all $(\mathfrak t,\fe) \in I$, $f_{\mathfrak t,\fe}(\theta)$ given as in Eq. \eqref{eq:f} belongs to $K_N$. The proof is complete.
\end{proof}

\subsection{Linear relations among CMPL's at roots of unity and Carlitz's period} \ppar

The main result for linear relations among CMPL's roots of unity reads as follows:

\begin{theorem} \label{thm: trans CMPL at roots of unity}
Let $w \in \N$. We recall that $\mathcal{S}_w$ denotes the set of positive tuples $\fs = (s_1, \dots, s_n)$ of weight $w$ such that $ q \nmid s_i$ for all $i$ and $\mathcal{CS}_{N,w}$ denotes the set of CMPL's at roots of unity $\Li \begin{pmatrix}
 \fe  \\
\fs  \end{pmatrix}$ such that $\mathfrak s= (s_1, \dots, s_n) \in \mathcal{S}_w$  and $\fe=(\epsilon_1,\dots,\epsilon_n) \in (\Gamma_N)^n$ (see \S \ref{sec:strong Brown CMPL}). 

1) Suppose that we have a non-trivial relation denoted by $R(N,w)$:
\begin{equation*}
a+\sum_{\fs_i \in \mathcal{S}_w} a_i L(\fs_i;\fe_i)(\theta)=0, \quad \text{for some $a, a_i \in \overline{\mathbb F}_q(\theta)$.}
\end{equation*}
Then $q-1 \mid w$, $a \neq 0$ and $\fe_i=(1,\dots,1)$ for all $i$.

2) Further, if $q-1 \mid w$, then there is a unique relation
\begin{equation*}
1+\sum_{\fs_i \in \mathcal{S}_w} a_i L(\fs_i;1)(\theta)=0, \quad \text{for $a_i \in K$.}
\end{equation*}
\end{theorem}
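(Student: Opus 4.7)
The proof proceeds by strong induction on the weight $w$, establishing parts 1 and 2 together. The inductive hypothesis implies that condition (LW) of Theorem \ref{theorem: linear independence} holds at weight $w$, so any non-trivial relation $R(N,w)$ produces, via the proof of that theorem, polynomials $\delta_{\mathfrak t,\fe} \in \overline K[t]$ and $b \in k_N[t] \setminus \{0\}$ satisfying the matrix identity \eqref{eq:equation for delta}, i.e., a system of $\sigma^R$-linear equations indexed by the poset $I$. Small-weight base cases can be verified directly since for $w<q$ the CMPL's in $\mathcal{CS}_{N,w}$ are too few to support Carlitz-type relations.

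\textbf{Step 1 (unfolding the ABP output).} Expanding Eq.~\eqref{eq:equation for delta} row-by-row gives, for each $(\mathfrak t,\fe) \in I$, a scalar equation relating $\delta_{\mathfrak t,\fe}^{(-R)}$, the entries $\Phi'_{(\mathfrak t,\fe),(\mathfrak t',\fe')}$ of the descent matrix, and the specific linear combination $b\, v_{\mathfrak t,\fe}$ built from the $(a_i)$. Reading these equations from the smallest tuples upward in the partial order on $I$ reduces the problem to solving, successively, equations of the shape $\delta^{(-R)} T_{w',R} + (\text{known}) = \delta$ in $k_N[t]$.

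\textbf{Step 2 (descent from level $R$ to level $1$, the crux).} The main technical ingredient, to be formalized in \S\ref{sec:transcendental part} as Proposition~\ref{prop: solving first equations}, asserts that the system from Step 1 has at most one solution in $k_N(t)$ up to a constant factor, and that any solution in $k_N[t]$ can be replaced by one in $k[t]$ satisfying a \emph{$\sigma$-linear} system of \emph{order $1$}. The proof will exploit the factored form of the blocks
$\sum_{0 < j_1 < \dots < j_r \leq R} T_{s_1,j_1}(\epsilon_1) \dots T_{s_r,j_r}(\epsilon_r)$
of $\Phi_{\fs,\fe}$, together with the action of $\Gal(k_N/k)$ on the cyclotomic characters: averaging over this Galois group kills non-trivial character contributions, forcing any $k_N[t]$-solution to descend to $k[t]$. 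This is the main obstacle, as the level-$R$ structure genuinely mixes Frobenius twists and cannot be handled by the direct order-$1$ methods of \cite{ND21,IKLNDP22}.

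\textbf{Step 3 (conclusion).} After descent, the level-$1$ system over $k[t]$ is of the type analyzed in \cite[\S 2--3]{IKLNDP22}. Galois invariance of the $k[t]$-valued solution, transferred back through the expressions $f_{\mathfrak t,\fe}$, forces every character tuple $\fe_i$ appearing in $R(N,w)$ to be trivial; combined with the analogue of Brown--Deligne's theorem (Theorem \ref{thm: strong BD CMPL}) and the known structure of the Carlitz fundamental relation, this forces $q-1 \mid w$ and $a \neq 0$, since the only obstruction to linear independence at weight $w$ with trivial characters is the Carlitz period $\widetilde \pi^w$ which is hidden in the term $a$. For part 2, when $q-1 \mid w$ the existence of such a Carlitz relation is classical (going back to \cite{ND21} for $N=1$ and extended to general $N$ via Theorem~\ref{thm: strong BD CMPL}), while uniqueness follows from the ``at most one solution up to a $k_N(t)$-constant'' clause of Step 2 combined with the normalization $a=1$.
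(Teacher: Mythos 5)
Your proposal correctly identifies the high-level architecture (apply the ABP criterion via Theorem~\ref{theorem: linear independence}, reduce to a system of $\sigma^R$-linear equations, prove an ``at most one solution up to a $k_N(t)$-scalar'' statement as in Proposition~\ref{prop: solving first equations}, and conclude by combining with Lemma~\ref{lem: relation Carlitz period}). However, two of the mechanisms you propose differ from what is actually needed and one would not work as stated.

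\textbf{The induction variable is $N+w$, not $w$.} You run strong induction on the weight only. The paper's proof is by induction on $N+w$, and this is not a cosmetic choice: the crucial final subcase (when $q-1 \mid w$ and the outer character satisfies $\epsilon^R = 1$) invokes the induction hypothesis for the pair $(R,w)$ where $R = [k_N : k] < N$, with the \emph{same} weight but a strictly smaller cyclotomic level. This is the ``descent on the tower of cyclotomic extensions'' flagged in the introduction as an original ingredient. An induction on $w$ alone never gives access to the statement at level $R$ with the same $w$, so it cannot close the case $\epsilon^R = 1$. Your base case claim is also off: for $w < q$ with $q-1 \mid w$ (e.g.\ $w = q-1$), a genuine Carlitz relation $1 + a_1 L(q-1;1)(\theta) = 0$ does exist, so the base cases are not ``too few to support Carlitz-type relations.'' Indeed even the paper's base case $N=w=1$ has to confront the possibility $q=2$, where such a relation appears.

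\textbf{Galois averaging is not the descent mechanism.} You propose that averaging over $\Gal(k_N/k)$ ``kills non-trivial character contributions'' and forces $k_N[t]$-solutions to descend to $k[t]$. The paper does not use any Galois averaging, and it is not clear that such an argument could be made to work: the $L$-series $L(\fs;\fe) = \mu_1\cdots\mu_r\, \frakL^{\text{un}}(\fs;\fe)$ involves fixed choices of $(q^R-1)$st roots $\mu_i$ of $\epsilon_i^R$, and the Galois conjugates of $L(\fs;\fe)$ are not simply $L(\fs;\sigma(\fe))$; moreover averaging would a priori produce a relation involving a \emph{sum} over characters, not one with trivial characters. What the paper does instead is entirely computational: it writes each $\delta_{\frak t,\fe}$ in the variables $T = t-t^q$, $X = t^q - \theta^q$, reduces the twisted equations modulo $T$, and shows that the resulting linear system has a matrix $B$ which is invertible (with entries in $\F_q(y)$ for the rows indexed by $J\setminus\emptyset$ because the character factors cancel in the coefficients $P_{k,j}$, and in $k_N(y)$ for the empty-tuple row). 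The character analysis for the empty tuple is then settled by the three-way case split on $q-1 \mid w$ and $\epsilon^R = 1$, with the last subcase handled by the $N+w$-induction described above, not by averaging. Your Step~2 as written therefore has a genuine gap, even though it correctly identifies Proposition~\ref{prop: solving first equations} as the crux.
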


We postpone the proof of Theorem \ref{thm: trans CMPL at roots of unity} to the next section. We derive an immediate consequence of Theorem \ref{thm: trans CMPL at roots of unity}:

\begin{theorem} \label{thm: transcendence}
The CMPL's at roots of unity in $\mathcal{CS}_{N,w}$ are linearly independent over $K_N$.
\end{theorem}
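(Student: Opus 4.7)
The plan is to derive Theorem \ref{thm: transcendence} as a short deduction from Theorem \ref{thm: trans CMPL at roots of unity}, Part 1, which packages all the genuine transcendental content. I would begin by supposing there exists a non-trivial $K_N$-linear relation
\[
\sum_{i} c_i \Li \begin{pmatrix} \fe_i \\ \fs_i \end{pmatrix} = 0,
\]
with pairwise distinct pairs $(\fs_i, \fe_i)$ where $\fs_i \in \mathcal{S}_w$, $\fe_i \in (\Gamma_N)^{\depth(\fs_i)}$, and coefficients $c_i \in K_N$ not all zero. The task is to rule out any such relation.

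The next step is to convert this identity into one involving the values $L(\fs_i; \fe_i)(\theta)$ so that Theorem \ref{thm: trans CMPL at roots of unity} becomes applicable. Dividing through by $\widetilde{\pi}^w$ and invoking the identity \eqref{eq: CMPL at roots of unity}, which reads $\Li \begin{pmatrix} \fe_i \\ \fs_i \end{pmatrix}/\widetilde{\pi}^w = L(\fs_i; \fe_i)(\theta)/\mu(\fs_i; \fe_i)$, the relation becomes
\[
\sum_{i} \frac{c_i}{\mu(\fs_i; \fe_i)} \, L(\fs_i; \fe_i)(\theta) = 0.
\]
Because each factor $\mu(\fs_i; \fe_i) = \mu_1 \cdots \mu_{\depth(\fs_i)}$ lies in $\overline{\mathbb F}_q^\times$ (as a product of roots of unity) and $K_N \subset \overline{\mathbb F}_q(\theta)$, the new coefficients $a_i := c_i / \mu(\fs_i; \fe_i)$ belong to $\overline{\mathbb F}_q(\theta)$ and are not all zero.

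At this point Theorem \ref{thm: trans CMPL at roots of unity}, Part 1, applies directly to the non-trivial relation just obtained, which is of the stated form with vanishing constant term $a = 0$. That theorem, however, asserts that any non-trivial relation of this shape must have $a \neq 0$. This contradiction forces all $c_i$ to vanish, proving linear independence over $K_N$.

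Thus Theorem \ref{thm: transcendence} is a one-step corollary; the real obstacle sits entirely upstream, in Theorem \ref{thm: trans CMPL at roots of unity}. Its proof, deferred to \S\ref{sec:transcendental part}, rests on the dual $t$-motive construction of \S\ref{sec:CMPL motives}--\S\ref{sec: linear combination CMPL motives}, the application of the Anderson--Brownawell--Papanikolas criterion to reduce matters to a system of $\sigma$-linear equations of order $R$, and finally a descent argument along the tower of cyclotomic extensions of $K$, as outlined in step \textbf{(B3)} of the introduction.
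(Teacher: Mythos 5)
Your proof is correct and matches the paper's intent: the paper states Theorem \ref{thm: transcendence} only as "an immediate consequence of Theorem \ref{thm: trans CMPL at roots of unity}" without writing out the short deduction, and your argument — dividing a hypothetical $K_N$-relation among the $\Li$'s by $\widetilde{\pi}^w$, absorbing the $\mu(\fs_i;\fe_i)$ factors into the coefficients to land in $\overline{\mathbb F}_q(\theta)$, and then noting that Part 1 forces $a\neq 0$, contradicting $a=0$ — is precisely the routine conversion the paper has in mind.
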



\section{CMPL's at roots of unity and Carlitz's period} \label{sec:transcendental part}

This section is devoted to proving Theorem \ref{thm: trans CMPL at roots of unity}. We keep the notation of the previous section and recall $R:=[k_N:k]<N$. 

\subsection{Auxiliary lemmas} \ppar

We begin this section by recalling several lemmas in \cite{IKLNDP22}.

\begin{lemma} \label{lem: gamma_i}
Let $\epsilon_i \in \Gamma_N$ be different elements such that the values $\epsilon_i^R$ are pairwise distinct. We denote by $\mu_i \in \overline{\F}_q$ a $(q^R-1)$ root of $\epsilon_i^R$. Then $\mu_i$ are all $k_N$-linearly independent.
\end{lemma}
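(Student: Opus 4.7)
The plan is to interpret the $\mu_i$ as eigenvectors of the Frobenius acting on $\overline{\mathbb F}_q$ viewed as a vector space over $k_N$, and conclude by the standard linear-algebra fact that eigenvectors attached to distinct eigenvalues are linearly independent.

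First, I would recall that $k_N = \mathbb F_{q^R}$ is by definition the fixed field of the Frobenius $\phi\colon \overline{\mathbb F}_q \to \overline{\mathbb F}_q$, $x \mapsto x^{q^R}$. Therefore $\phi$ is $k_N$-linear (since $a^{q^R}=a$ for $a \in k_N$), i.e.\ $\phi$ is an endomorphism of the $k_N$-vector space $\overline{\mathbb F}_q$. Note also that each $\mu_i$ is nonzero, since $\epsilon_i^R \in \Gamma_N \setminus \{0\}$.

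The key computation is then
\[
\phi(\mu_i) = \mu_i^{q^R} = \mu_i \cdot \mu_i^{q^R-1} = \epsilon_i^R \, \mu_i,
\]
where the last equality uses the defining property that $\mu_i$ is a $(q^R-1)$th root of $\epsilon_i^R$. Hence each $\mu_i$ is an eigenvector of the $k_N$-linear operator $\phi$ with eigenvalue $\epsilon_i^R \in k_N^\times$. By hypothesis the eigenvalues $\epsilon_i^R$ are pairwise distinct, so the $\mu_i$ are eigenvectors of a single $k_N$-linear endomorphism corresponding to pairwise distinct eigenvalues. The usual Vandermonde/induction argument therefore gives $k_N$-linear independence of the $\mu_i$.

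There is really no serious obstacle here; the only subtle point is making sure one has chosen the Frobenius of the correct degree (namely $q^R$, not $q$) so that it is indeed $k_N$-linear rather than merely $k$-linear, and noting that the eigenvalues $\epsilon_i^R$ live in $k_N$ so that the eigenvalue equation $\phi(\mu_i) = \epsilon_i^R \mu_i$ takes place in the $k_N$-vector space structure.
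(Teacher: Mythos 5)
Your argument is correct, and since the paper simply cites \cite[Lemma 3.1]{IKLNDP22} for this statement, you have in effect reproduced the proof from scratch. The key observation — that the $q^R$-power Frobenius $\phi$ is $k_N$-linear (being additive in characteristic $p$ and satisfying $\phi(ax)=a^{q^R}x^{q^R}=a\phi(x)$ for $a \in k_N=\mathbb F_{q^R}$), and that each $\mu_i$ is a $\phi$-eigenvector with eigenvalue $\epsilon_i^R \in k_N^\times$ because $\phi(\mu_i)=\mu_i^{q^R}=\mu_i^{q^R-1}\mu_i=\epsilon_i^R\mu_i$ — is precisely what makes the lemma work, and the Vandermonde/minimal-relation argument for distinct eigenvalues finishes it cleanly. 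This is almost certainly the same mechanism used in the cited proof, just phrased there as iterating the relation $\sum c_i\mu_i=0$ under $\phi^j$ (giving $\sum c_i(\epsilon_i^R)^j\mu_i=0$, hence a Vandermonde system) rather than in explicit eigenvector language. You were also right to flag the subtle point about choosing the Frobenius of degree $q^R$ (not $q$) so that it is $k_N$-linear with eigenvalues actually lying in $k_N$; without that the eigenvalue relation would not live in the right vector-space structure.
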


\begin{proof}
See \cite[Lemma 3.1]{IKLNDP22}.
\end{proof}

\begin{lemma} \label{lem: same character}
Let $ \Li \begin{pmatrix}
 \fe_i  \\
\fs_i \end{pmatrix} \in \mathcal{CS}_{N,w}$ and $a_i \in K_N$ satisfying
\begin{equation*}
\sum_i a_i L(\fs_i;\fe_i)(\theta)=0.
\end{equation*}
For $\epsilon \in \Gamma_N$ we denote by $I(\epsilon^R)=\{i:\, \chi(\fe_i)^R=\epsilon^R\}$ the set of pairs such that the $R$th power of the corresponding character equals $\epsilon^R$. Then for all $\epsilon \in \Gamma_N$,
	\[ \sum_{i \in I(\epsilon^R)} a_i L(\fs_i;\fe_i)(\theta)=0. \]
\end{lemma}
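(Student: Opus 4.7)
The strategy is to exploit Lemma \ref{lem: gamma_i} via a linear disjointness argument. The key observation is that $L(\fs;\fe)(\theta) = \mu(\fs;\fe)\,\Li\begin{pmatrix}\fe \\ \fs\end{pmatrix}/\widetilde{\pi}^{w}$ by Eq.~\eqref{eq: CMPL at roots of unity}, where the scalar $\mu(\fs;\fe) = \mu_1 \cdots \mu_r$ is a $(q^R-1)$-th root of $\chi(\fe)^R \in k_N^\times$. Thus the ``algebraic part'' $\mu(\fs;\fe)$ already distinguishes the equivalence classes of $\chi^R$, while the ``analytic part'' $\Li(\cdots)$ lies in $K_{N,\infty}$ and should not obstruct this separation.

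Concretely, I would first multiply the hypothesis by $\widetilde{\pi}^w$ to rewrite it as
\[
\sum_i a_i\,\mu(\fs_i;\fe_i)\,\Li\!\begin{pmatrix}\fe_i \\ \fs_i\end{pmatrix}=0.
\]
I then partition the indices according to the equivalence $i \sim i'$ iff $\chi(\fe_i)^R = \chi(\fe_{i'})^R$. Let $\epsilon_{(1)}^R,\dots,\epsilon_{(m)}^R$ be the distinct values that occur, and fix one $(q^R-1)$-th root $\nu_j$ of $\epsilon_{(j)}^R$. Any other $(q^R-1)$-th root of the same element differs from $\nu_j$ by a $(q^R-1)$-th root of unity, hence by an element of $k_N^\times = \mathbb F_{q^R}^\times$; therefore for each $i \in I(\epsilon_{(j)}^R)$ one can write $\mu(\fs_i;\fe_i) = \nu_j\,\eta_i$ with $\eta_i \in k_N^\times$. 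Substituting yields
\[
\sum_{j=1}^m \nu_j\biggl(\sum_{i \in I(\epsilon_{(j)}^R)} a_i\,\eta_i\,\Li\!\begin{pmatrix}\fe_i \\ \fs_i\end{pmatrix}\biggr)=0.
\]

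To finish, I need the inner sums to vanish individually. By Lemma \ref{lem: gamma_i} the elements $\nu_1,\dots,\nu_m$ of $\overline{\mathbb F}_q$ are $k_N$-linearly independent, and since $k_N$ is algebraically closed in $K_{N,\infty} = k_N(\!(1/\theta)\!)$ (any Laurent series algebraic over $k_N$ must be a constant), the subfields $\overline{\mathbb F}_q$ and $K_{N,\infty}$ of $\C_\infty$ are linearly disjoint over $k_N$. Consequently the $\nu_j$ remain linearly independent over $K_{N,\infty}$, so each inner sum is forced to be zero; multiplying back by $\nu_j/\widetilde\pi^w$ recovers the desired relation $\sum_{i \in I(\epsilon^R)} a_i L(\fs_i;\fe_i)(\theta)=0$ for each $\epsilon \in \Gamma_N$. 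The only delicate step is the linear disjointness, but it is standard once one notes the regularity of $K_{N,\infty}/k_N$.
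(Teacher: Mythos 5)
Your proof is correct and follows essentially the same route as the paper's (which defers to \cite[Lemma 3.2]{IKLNDP22}): multiply by $\widetilde\pi^w$, group by the value of $\chi(\fe_i)^R$, factor out one $(q^R-1)$th root $\nu_j$ per class so the remaining coefficients lie in $k_N^\times$, invoke Lemma \ref{lem: gamma_i} for $k_N$-linear independence of the $\nu_j$, and upgrade to $K_{N,\infty}$-linear independence by expanding in $k_N(\!(1/\theta)\!)$ and comparing Laurent coefficients. Your phrasing via regularity of $K_{N,\infty}/k_N$ and linear disjointness from $\overline{\mathbb F}_q$ is a valid abstract repackaging of that coefficient-comparison step, so there is no gap.
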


\begin{proof}
See \cite[Lemma 3.2]{IKLNDP22}.
\end{proof}

\fantome{
\begin{proof}
We keep the notation of Lemma \ref{lem: gamma_i}. Suppose that we have a relation
	\[\sum_i \mu_i a_i=0\]
with $a_i \in K_{N,\infty}$. By Lemma \ref{lem: gamma_i} and the fact that $K_{N,\infty}=k_N((1/\theta))$, we deduce that $a_i=0$ for all $i$.

By Eq. \eqref{eq: CMPL at roots of unity} the relation $\sum_i a_i L(\fs_i;\fe_i)(\theta)=0$ is equivalent to the following one
	\[ \sum_i a_i \mu_{i1} \dots \mu_{i\ell_i}  \Li \begin{pmatrix}
 \fe_i  \\
\fs_i  \end{pmatrix}=0. \]
By the previous discussion, for all $\epsilon \in \Gamma_N$,
	\[ \sum_{i \in I(\epsilon^R)} a_i \mu_{i1} \dots \mu_{i\ell_i}  \Li \begin{pmatrix}
 \fe_i  \\
\fs_i  \end{pmatrix}=0. \]
By Eq. \eqref{eq: CMPL at roots of unity} again we deduce the desired relation	
	\[ \sum_{i \in I(\epsilon^R)} a_i L(\fs_i;\fe_i)(\theta)=0. \]
\end{proof}

\begin{lemma} \label{lem: KuanLin}
Let $m \in \mathbb N$, $\varepsilon \in \Fq^\times$, $\delta \in \overline K[t]$ and $F(t,\theta) \in \overline{\F}_q[t,\theta]$ (resp. $F(t,\theta) \in \F_q[t,\theta]$) satisfying
	\[ \varepsilon\delta = \delta^{(-1)}(t-\theta)^m+F^{(-1)}(t,\theta). \]
Then $\delta \in \overline{\F}_q[t,\theta]$ (resp. $\delta \in \F_q[t,\theta]$) and
	\[ \deg_\theta \delta \leq \max\left\{\frac{qm}{q-1},\frac{\deg_\theta F(t,\theta)}{q}\right\}. \]
\end{lemma}

\begin{proof}
The proof follows the same line as that of \cite[Theorem 2]{KL16} where it is shown that if $F(t,\theta) \in \F_q[t,\theta]$ and $\varepsilon=1$, then $\delta \in \F_q[t,\theta]$. We write down the proof for the case $F(t,\theta) \in \overline{\F}_q[t,\theta]$ for the convenience of the reader.

By twisting once the equality $\varepsilon\delta = \delta^{(-1)}(t-\theta)^m+F^{(-1)}(t,\theta)$ and the fact that $\varepsilon^q=\varepsilon$, we get
	\[ \varepsilon\delta^{(1)} = \delta(t-\theta^q)^m+F(t,\theta). \]
We put $n=\deg_t \delta$ and express
	\[ \delta=a_n t^n + \dots +a_1t+a_0 \in \overline K[t] \]
with $a_0,\dots,a_n \in \overline K$. For $i<0$ we put $a_i=0$.

Since $\deg_t \delta^{(1)}=\deg_t \delta=n < \delta(t-\theta^q)^m=n+m$, it follows that $\deg_t F(t,\theta)=n+m$. Thus we write $F(t,\theta)=b_{n+m} t^{n+m}+\dots+b_1 t+b_0$ with $b_0,\dots,b_{n+m} \in \overline{\F}_q[\theta]$. Plugging into the previous equation, we obtain
	\[ \varepsilon(a_n^q t^n + \dots +a_0^q) = (a_n t^n + \dots +a_0)(t-\theta^q)^m+b_{n+m} t^{n+m}+\dots+b_0. \]

Comparing the coefficients $t^j$ for $n+1 \leq j \leq n+m$ yields
	\[ a_{j-m}+\sum_{i=j-m+1}^{n} {m \choose j-i} (-\theta^q)^{m-j+i} a_i+b_j=0. \]
Since $b_j \in \overline{\F}_q[\theta]$ for all $n+1 \leq j \leq n+m$, we can show by descending induction that $a_j \in \overline{\F}_q[\theta]$ for all $n+1-m \leq j \leq n$.

If $n+1-m \leq 0$, then we are done. Otherwise, comparing the coefficients $t^j$ for $m \leq j \leq n$ yields
	\[ a_{j-m}+\sum_{i=j-m+1}^{n} {m \choose j-i} (-\theta^q)^{m-j+i} a_i+b_j-\varepsilon a_j^q=0. \]
Since $b_j \in \overline{\F}_q[\theta]$ for all $m \leq j \leq n$ and $a_j \in \overline{\F}_q[\theta]$ for all $n+1-m \leq j \leq n$, we can show by descending induction that $a_j \in \overline{\F}_q[\theta]$ for all $0 \leq j \leq n-m$. We conclude that $\delta \in \overline{\F}_q[t,\theta]$.

We now show that $\deg_\theta \delta \leq \max\{\frac{qm}{q-1},\frac{\deg_\theta F(t,\theta)}{q}\}$. Otherwise, suppose that $\deg_\theta \delta > \max\{\frac{qm}{q-1},\frac{\deg_\theta F(t,\theta)}{q}\}$. Then $\deg_\theta \delta^{(1)}=q \deg_\theta \delta$. It implies that $\deg_\theta \delta^{(1)} > \deg_\theta (\delta(t-\theta^q)^m)=\deg_\theta \delta+qm$ and $\deg_\theta \delta^{(1)} >\deg_\theta F(t,\theta)$. Hence we get
	\[\deg_\theta (\varepsilon \delta^{(1)})= \deg_\theta \delta^{(1)} > \deg_\theta(\delta(t-\theta^q)^m+F(t,\theta)), \]
which is a contradiction.
\end{proof}

}

\begin{lemma} \label{lem: KuanLin 2}
Let $m \in \mathbb N$, $\epsilon \in \Gamma_N$, $\delta \in \overline K[t]$ and $F(t,\theta) \in \overline{\F}_q[t,\theta]$ (resp. $F(t,\theta) \in k_N[t,\theta]$) satisfying
	\[ \epsilon\delta = \delta^{(-R)}T_{m,R}+F^{(-R)}(t,\theta). \]
Then $\delta \in \overline{\F}_q[t,\theta]$ (resp. $\delta \in k_N[t,\theta]$) and
	\[ \deg_\theta \delta \leq \max\left\{\frac{qm}{q-1},\frac{\deg_\theta F(t,\theta)}{q^R}\right\}. \]
\end{lemma}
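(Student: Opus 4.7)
The plan is to follow the strategy of Kuan--Lin \cite{KL16} (used implicitly in \cite{IKLNDP22,ND21}), adapting it from the $1$-fold twist to the $R$-fold twist. The key algebraic observation is that $\epsilon \in \Gamma_N \subset k_N = \mathbb{F}_{q^R}$ satisfies $\epsilon^{q^R} = \epsilon$, so a single $R$-fold twisting of the equation preserves the scalar.

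First, I apply the $R$-fold twist to the given equation to obtain
\[
\epsilon\, \delta^{(R)} \;=\; \delta \cdot P(t,\theta) \;+\; F(t,\theta),\qquad
P(t,\theta) \;:=\; \prod_{i=1}^{R}(t-\theta^{q^i})^m \;\in\; k_N[t,\theta].
\]
Here $P$ is monic in $t$ of $t$-degree $mR$ and of $\theta$-degree $\tfrac{mq(q^R-1)}{q-1}$. Write $\delta = \sum_{j=0}^{n} a_j\, t^j$ with $a_j \in \overline K$ and $a_n \neq 0$, and $F(t,\theta) = \sum_{j} b_j(\theta)\, t^j$ with $b_j \in \overline{\mathbb F}_q[\theta]$ (resp.\ $b_j \in k_N[\theta]$); a short $t$-degree count on the twisted equation forces $\deg_t F = n+mR$.

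Second, I run a descending induction on $j$ to prove each $a_j \in \overline{\mathbb F}_q[\theta]$ (resp.\ $k_N[\theta]$). For $n < j \leq n+mR$, the left-hand side contributes nothing in degree $t^j$; since $P$ is monic in $t$, matching the $t^j$-coefficient expresses $a_{j-mR}$ as an $\overline{\mathbb F}_q[\theta]$-linear combination (resp.\ $k_N[\theta]$-combination) of $b_j$ and of $a_i$'s with $i > j-mR$, which have been determined at earlier steps. For $j \leq n$ the same recursion acquires an extra summand $\epsilon\, a_j^{q^R}$ on the left; but $a_j$ was pinned down at an earlier step, and its $q^R$-th power remains in the required subring---in the $k_N$ case using crucially that $x^{q^R}=x$ for $x \in \mathbb{F}_{q^R}$, which ensures $a_j^{q^R} \in k_N[\theta]$ whenever $a_j \in k_N[\theta]$. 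The induction therefore runs to completion and yields $\delta \in \overline{\mathbb F}_q[t,\theta]$ (resp.\ $\delta \in k_N[t,\theta]$) uniformly, with no separate Galois-descent step needed.

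Finally, for the degree bound, suppose for contradiction that $\deg_\theta \delta > \max\{\tfrac{qm}{q-1},\ \tfrac{\deg_\theta F}{q^R}\}$. Then $\deg_\theta(\epsilon\, \delta^{(R)}) = q^R \deg_\theta \delta$ strictly exceeds both $\deg_\theta(\delta \cdot P) = \deg_\theta \delta + \tfrac{mq(q^R-1)}{q-1}$ (the threshold $\tfrac{qm}{q-1}$ rearranges to $(q^R-1)\deg_\theta\delta > \tfrac{mq(q^R-1)}{q-1}$) and $\deg_\theta F$ (by the second term), contradicting the twisted equation. The anticipated main obstacle is the bookkeeping in the descending induction for $j \leq n$: one must verify at each step that the extra $\epsilon\, a_j^{q^R}$ term sits on the ``already known'' side of the recursion, so that $a_{j-mR}$ remains solvable in the correct subring; everything else reduces to routine $t$- and $\theta$-degree counts.
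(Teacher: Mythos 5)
Your proof is correct and follows the same route as the paper's: twist the equation $R$ times so that $P := T_{m,R}^{(R)} = \prod_{i=1}^{R}(t-\theta^{q^i})^m$ has coefficients in $\Fq[\theta]$, run a descending induction on the $t$-coefficients (first over $n+mR\ge j>n$ where the left-hand side contributes nothing, then over $n\ge j\ge mR$ where $a_j^{q^R}$ is already determined at an earlier step), and close with a $\theta$-degree comparison for the bound. One small imprecision: the identity $x^{q^R}=x$ on $\mathbb{F}_{q^R}$ plays no special role in keeping $a_j^{q^R}$ inside $k_N[\theta]$ (any subring is closed under taking $q^R$-th powers); what actually matters, and what your argument does correctly use, is that the coefficients of $P$ lie in $\Fq[\theta]$.
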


\begin{proof}
This lemma is a slight generalization of \cite[Theorem 2]{KL16}, which proved the case $N=1$ (hence $R=1$ and $T_{m,R}=(t-\theta)^m$).

By twisting $R$ times the equality $\epsilon\delta = \delta^{(-R)}T_{m,R}+F^{(-R)}(t,\theta)$ and the fact that $\epsilon^{(R)}=\epsilon$, we get
	\[ \epsilon\delta^{(R)} = \delta (T_{m,R})^{(R)}+F(t,\theta). \]
We put $n=\deg_t \delta$ and express $\delta=a_n t^n + \dots +a_1t+a_0 \in \overline K[t]$
with $a_0,\dots,a_n \in \overline K$. For $i<0$ we put $a_i=0$.

Recall that $T_{m,R}=(t-\theta)^m ((t-\theta)^m)^{(-1)} \dots ((t-\theta)^m)^{(-(R-1))}$, thus 
\begin{align*}
T_{m,R}^{(R)}&=((t-\theta)^m)^{(R)} ((t-\theta)^m)^{(R-1)} \dots ((t-\theta)^m)^{(1)} \\
&=(t-\theta^{q^R})^m \dots (t-\theta^q)^m \\
&=t^{mR} + c_{mR-1} t^{mR-1} +\dots +c_1 t + c_0,
\end{align*}
where $c_i \in \Fq[\theta]$. We put $c_{mR}=1$ and $c_i=0$ for $i<0$.

Since 
\[ \deg_t \delta^{(R)}=\deg_t \delta=n <\deg_t(\delta (T_{m,R})^{(R)})=n+mR, \]
it follows that $\deg_t F(t,\theta)=n+mR$. Thus we write $F(t,\theta)=b_{n+mR} t^{n+mR}+\dots+b_1 t+b_0$ with $b_0,\dots,b_{n+mR} \in \overline{\F}_q[\theta]$. Plugging into the previous equation, we obtain
\begin{align*}
&\epsilon(a_n^{q^R} t^n + \dots +a_0^{q^R}) \\
&= (a_n t^n + \dots +a_0)(T_{m,R})^{(R)}+b_{n+mR} t^{n+mR}+\dots+b_0 \\
&= (a_n t^n + \dots +a_0)(t^{mR} + c_{mR-1} t^{mR-1} + \dots + c_0)+b_{n+mR} t^{n+mR}+\dots+b_0.
\end{align*}
Comparing the coefficients $t^j$ for $n+1 \leq j \leq n+mR$ yields
	\[ a_{j-mR}+\sum_{i=j-mR+1}^{n} a_i c_{j-i}+b_j=0. \]
Since $b_j \in \overline{\F}_q[\theta]$ for all $n+1 \leq j \leq n+mR$, we can show by descending induction that $a_j \in \overline{\F}_q[\theta]$ for all $n+1-mR \leq j \leq n$.

If $n+1-mR \leq 0$, then we are done. Otherwise, comparing the coefficients $t^j$ for $mR \leq j \leq n$ yields
	\[ a_{j-mR}+\sum_{i=j-mR+1}^{n} a_i c_{j-i}+b_j-\epsilon a_j^{q^R}=0. \]
Since $b_j \in \overline{\F}_q[\theta]$ for all $mR \leq j \leq n$ and $a_j \in \overline{\F}_q[\theta]$ for all $n+1-mR \leq j \leq n$, we can show by descending induction that $a_j \in \overline{\F}_q[\theta]$ for all $0 \leq j \leq n-mR$. We conclude that $\delta \in \overline{\F}_q[t,\theta]$.

We now show that $\deg_\theta \delta \leq \max\{\frac{qm}{q-1},\frac{\deg_\theta F(t,\theta)}{q^R}\}$. Otherwise, suppose that $\deg_\theta \delta > \max\{\frac{qm}{q-1},\frac{\deg_\theta F(t,\theta)}{q^R}\}$. Then $\deg_\theta \delta^{(R)}=q^R \deg_\theta \delta$. It implies that $\deg_\theta \delta^{(R)} > \deg_\theta (\delta(T_{m,R})^{(R)})=\deg_\theta \delta+m \frac{q^{R+1}-q}{q-1}$ and $\deg_\theta \delta^{(R)} >\deg_\theta F(t,\theta)$. Hence we get
	\[\deg_\theta (\epsilon \delta^{(R)})= \deg_\theta \delta^{(R)} > \deg_\theta(T_{m,R})^{(R)}+F(t,\theta), \]
which is a contradiction.
\end{proof}

\begin{lemma} \label{lem: relation Carlitz period}
Recall that the set  $\mathcal{S}_w$ consists of positive tuples $\fs = (s_1, \dots, s_n)$ of weight $w$ such that $ q \nmid s_i$ for all $i$ as in \S \ref{sec:strong Brown CMPL}. Let $w \in \mathbb N$ such that $q-1 \mid w$. Then there exists a linear relation
\begin{equation*}
1+\sum_{\fs_i \in \mathcal{S}_w} a_i L(\fs_i;1)(\theta)=0, \quad \text{with $a_i \in K$.}
\end{equation*}

\end{lemma}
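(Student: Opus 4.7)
The plan is to reduce the problem to the base case $N=1$ and combine two ingredients: the Carlitz--Euler evaluation of depth-one Carlitz zeta values at ``even'' weights, and the analogue of Brown--Deligne's theorem proved earlier in the paper. The target relation lives entirely in the classical $N=1$ setting (since the character is trivial and the coefficients are required to lie in $K$), so in fact the larger cyclotomic framework is not needed here; everything happens inside $\mathcal{CZ}_{1,w}$.

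First, I would invoke the Carlitz--Euler relation: when $(q-1)\mid w$, there exists a constant $c\in K^\times$ such that
\[ \zeta_A(w) \;=\; c\,\widetilde\pi^{\,w}. \]
Next, observing that $\zeta_A(w)\in\mathcal{CZ}_{1,w}$, I would apply Theorem~\ref{thm: strong BD MZV} specialized to $N=1$ (so $\Gamma_N=\{1\}$ and $K_N=K$) to write
\[ \zeta_A(w) \;=\; \sum_{\fs_i\in\mathcal S_w} b_i\,\Li(\fs_i), \qquad b_i\in K, \]
as a $K$-linear combination of the Hoffman-type generators indexed by $\mathcal S_w$.

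Finally, using Eq.~\eqref{eq: CMPL at roots of unity} with trivial character $\fe_i=(1,\dots,1)$ (so we may take every $\mu_{ij}=1$), we have $L(\fs_i;1)(\theta)=\Li(\fs_i)/\widetilde\pi^{\,w}$. Dividing the displayed identity $c\,\widetilde\pi^{\,w}=\sum_i b_i\Li(\fs_i)$ by $\widetilde\pi^{\,w}$ gives
\[ c \;=\; \sum_{\fs_i\in\mathcal S_w} b_i\,L(\fs_i;1)(\theta), \]
and then dividing by $c\in K^\times$ yields the desired relation
\[ 1 \,+\, \sum_{\fs_i\in\mathcal S_w} a_i\,L(\fs_i;1)(\theta) \;=\; 0, \qquad a_i=-b_i/c\in K. \]

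There is no serious obstacle in this argument: the only nontrivial points are (i) that Brown--Deligne applied to $\mathcal{CZ}_{1,w}$ keeps the coefficients in $K$ rather than $K_N$, which is automatic because $K_1=K$, and (ii) that the normalization $L(\fs;1)(\theta)=\Li(\fs)/\widetilde\pi^{\,w}$ holds with the canonical choice $\mu_j=1$, which is recorded in Eq.~\eqref{eq: CMPL at roots of unity}. The real content has been placed earlier in the paper, in the analogue of Brown--Deligne's theorem; the lemma is then essentially the classical Euler relation packaged in the right language.
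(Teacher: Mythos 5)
Your proof is correct and follows the same route as the paper: the paper's proof simply invokes the Carlitz--Euler fact that $\zeta_A(w)/\widetilde\pi^w\in K^\times$ when $(q-1)\mid w$ together with Theorems \ref{thm:bridge} and \ref{thm: strong BD CMPL} specialized to $N=1$, which is exactly what you unwind explicitly via Theorem \ref{thm: strong BD MZV} and the normalization $L(\fs;1)(\theta)=\Li(\fs)/\widetilde\pi^w$.
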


\begin{proof}
Since $q-1 \mid w$, it follows that $\zeta_A(w)/\widetilde{\pi}^w$ belongs to $K^\times$. Theorems \ref{thm:bridge} and \ref{thm: strong BD CMPL} applied to $N=1$ imply immediately the desired assertion.
\end{proof}

The rest of this section presents a proof of Theorem \ref{thm: trans CMPL at roots of unity}.

\subsection{Setup} \ppar

The proof of Theorem \ref{thm: trans CMPL at roots of unity} is by  induction on $N+w$.

If $N+w=2$, then $N=w=1$. Thus we have only one tuple $(\fs_1,\fe_1)=(1,1)$. Suppose that we have a non-trivial relation
\begin{equation*}
a+a_1 L(1;1)=0, \quad \text{for $a, a_1 \in \overline{\mathbb F}_q(\theta)$.}
\end{equation*}
Then $a \neq 0$ and it follows that $q-1 \mid 1$. Hence $q=2$ and Theorem \ref{thm: trans CMPL at roots of unity} follows as $L(1;1)$ belongs to $K$.

Let $M$ be a positive integer with $M>2$. Suppose that 
\begin{center}
$H(N,w)$: \quad Theorem \ref{thm: trans CMPL at roots of unity} holds for all pairs $(N,w)$ such that $N+w<M$. 
\end{center}

We claim that Theorem \ref{thm: trans CMPL at roots of unity} holds for all pairs $(N,w)$ such that $N+w=M$. In what follows, we fix such a pair $(N,w)$. We recall $R=[k_N:k]<N$.

Suppose that we have a non-trivial relation
\begin{equation} \label{eq: non-trivial relation MZV}
R(N,w): \quad a+\sum_{\fs_i \in \mathcal{S}_w} a_i L(\fs_i;\fe_i)(\theta)=0, \quad \text{for $a, a_i \in \overline{\mathbb F}_q(\theta)$.}
\end{equation}

We claim that we can suppose $a, a_i \in K_N$. In fact, if $a=0$, then the claim follows from Proposition \ref{prop: MZ property}. If $q-1 \nmid w$, then by Eq. \eqref{eq: CMPL at roots of unity} any linear relation
	\[ a+\sum_{\fs_i \in \mathcal{S}_w} a_i L(\fs_i;\fe_i)(\theta)=0 \]
with $a,a_i \in \overline{\mathbb F}_q(\theta)$ implies that $a=0$. By the previous discussion, we are done. Otherwise, we have $q-1 \mid w$ and $a \neq 0$. By Lemma \ref{lem: relation Carlitz period}, there exists a linear relation
\[ 1+\sum_{\frak t_i \in \mathcal{S}_w} b_i L(\frak t_i;1)(\theta)=0 \]
with $b_i \in K$. Thus
	\[ -a\sum_{\frak t_i \in \mathcal{S}_w} b_i L(\frak t_i;1)(\theta)+\sum_{\fs_i \in \mathcal{S}_w} a_i L(\fs_i;\fe_i)(\theta)=0. \]
By Proposition \ref{prop: MZ property} again, we can suppose that $a, a_i \in K_N$. 

Further, by Lemma \ref{lem: same character} we can suppose further that $\fe_i^R$ has the same character, i.e., there exists $\epsilon \in \Gamma_N$ such that for all $i$,
\begin{equation} \label{eq: same character}
\chi(\fe_i)^R=(\epsilon_{i1} \dots \epsilon_{i\ell_i})^R=\epsilon^R.
\end{equation}

\subsection{Trivial characters} \ppar

We now apply Theorem \ref{theorem: linear independence} to our setting of CMPL's at roots of unity. We know that the hypotheses are verified:
\begin{itemize}
\item[(LW)] By the induction hypothesis, for any weight $w'<w$, the values $L(\frak t;\fe)(\theta)$ with $(\frak t;\fe) \in I$ and $w(\frak t)=w'$ are all $K_N$-linearly independent.

\item[(LD)] By Eq. \eqref{eq: non-trivial relation MZV}, there exist $a \in A_N$ and $a_i \in A_N$ for $1 \leq i \leq n$ which are not all zero such that
\begin{equation*}
a+\sum_{i=1}^n a_i L(\fs_i;\fe_i)(\theta)=0.
\end{equation*}
\end{itemize}
Thus Theorem \ref{theorem: linear independence} implies that for all $(\mathfrak t;\fe) \in I$, $f_{\mathfrak t,\fe}(\theta)$ belongs to $K_N$ where $f_{\mathfrak t,\fe}$ is given by
\begin{equation*}
f_{\mathfrak t;\fe}:= \sum_i a_i(t) L(s_{i(k+1)},\dots,s_{i \ell_i};\epsilon_{i(k+1)},\dots,\epsilon_{i \ell_i}).
\end{equation*}
Here, the sum runs through the set of $i$ such that $(\mathfrak t;\fe)=(s_{i1},\dots,s_{i k};\epsilon_{i1},\dots,\epsilon_{i k})$ for some $0 \leq k \leq \ell_i-1$.

We derive a direct consequence of this result. Let $(\mathfrak t;\fe) \in I$ and $\mathfrak t \neq \emptyset$. Then $(\mathfrak t;\fe)=(s_{i1},\dots,s_{i k};\epsilon_{i1},\dots,\epsilon_{i k})$ for some $i$ and $1 \leq k \leq \ell_i-1$. We denote by $J(\mathfrak t;\fe)$ the set of all such $i$. We know that there exists $b \in K_N$ such that
\begin{equation*}
b+f_{\mathfrak t;\fe}(\theta)=0,
\end{equation*}
or equivalently,
\begin{equation*}
b+\sum_{i \in J(\frak t;\fe)} a_i L(s_{i(k+1)},\dots,s_{i \ell_i};\epsilon_{i(k+1)},\dots,\epsilon_{i \ell_i})(\theta)=0.
\end{equation*}
The CMPL's at roots of unity appearing in the above equality belong to $\mathcal{CS}_{N,w-w(\mathfrak t)}$. By the induction hypothesis applied to the pair $(N,w-w(\frak t))$, we deduce that $\epsilon_{i(k+1)}=\dots=\epsilon_{i \ell_i}=1$. Further, if $q-1 \nmid w-w(\frak t)$, then $a_i=0$ for all $i \in J(\frak t;\fe)$. Therefore,  letting $(\fs_i,\fe_i)=(s_{i1},\dots,s_{i \ell_i};\epsilon_{i1},\dots,\epsilon_{i \ell_i})$ we can suppose that $s_{i2},\dots,s_{i \ell_i}$ are all divisible by $q-1$ and $\epsilon_{i2}=\dots=\epsilon_{i \ell_i}=1$. In particular, for all $i$, 
\begin{equation*}
\epsilon_{i1}^R=\chi(\fe_i)^R=\epsilon^R.
\end{equation*}

\subsection{$\sigma$-linear equations of higher order} \ppar

Now we want to solve Eq. \eqref{eq:equation for delta} and we can assume that $b=1$. We define
	\[ J:=I \cup \{(\fs_i;\fe_i)\}\]
For $(\mathfrak t;\fe) \in J$, 
\begin{itemize}
\item if $(\mathfrak t;\fe) \neq (\emptyset,\emptyset)$, then we put
	\[ m_{\frak t}:=\frac{w-w(\frak t)}{q-1} \in \mathbb Z^{\geq 0}. \]

\item we denote by $J_1(\frak t;\fe)$ consisting of $(\frak t';\fe') \in J$ such that there exist $i$ and $0 \leq j <\ell_i$ so that $(\frak t;\fe)=(s_{i1},s_{i2},\dots,s_{ij};\epsilon_{i1},1,\dots,1)$ and $(\frak t',\fe')=(s_{i1},s_{i2},\dots,s_{i(j+1)};\epsilon_{i1},1,\dots,1))$;

\item we denote by $J_\infty(\frak t;\fe)$ consisting of $(\frak t';\fe') \in J$ such that there exist $i$ and $0 \leq j<k \leq \ell_i$ so that $(\frak t;\fe)=(s_{i1},s_{i2},\dots,s_{ij};\epsilon_{i1},1,\dots,1)$ and $(\frak t',\fe')=(s_{i1},s_{i2},\dots,s_{ik};\epsilon_{i1},1,\dots,1)$. 
\end{itemize}
Thus $J_1(\frak t;\fe) \subset J_\infty(\frak t;\fe)$. Further, for $(\frak t;\fe)=(\fs_i;\fe_i)$, both $J_1(\frak t;\fe)$ and $J_\infty(\frak t;\fe)$ are the empty set. 

Here letting $(\frak t;\fe)=(s_{i1},s_{i2},\dots,s_{ij};\epsilon_{i1},1,\dots,1)$ for some $i$ and $0 \leq j <\ell_i$ and $(\mathfrak t';\fe')=(s_{i1},s_{i2},\dots,s_{i k};\epsilon_{i1},1,\dots,1) \in J_\infty(\frak t;\fe)$ with $j < k \leq \ell_i$, we recall (see the proof of Theorem \ref{theorem: linear independence})
\begin{itemize}
\item $\Phi'_{(\mathfrak t;\fe),(\mathfrak t;\fe)}=T_{w-w(\mathfrak t),R}$.

\item If $j=0$ that means $(\frak t;\fe)=\emptyset$, then
\begin{align*}
\Phi'_{(\mathfrak t';\fe'),\emptyset}=\mu_{i1} T_{w-w(\mathfrak t'),R} \sum_{0<m_{i1}<\dots<m_{ik} \leq R} \epsilon_{i1}^{-m_{i1}} T_{s_{i1},m_{i1}} \dots T_{s_{ik},m_{ik}}.
\end{align*}

\item If $j>0$ that means $(\frak t;\fe) \neq \emptyset$, then
\begin{align*}
\Phi'_{(\mathfrak t';\fe'),(\mathfrak t;\fe)}=T_{w-w(\mathfrak t'),R} \sum_{0<m_{i(j+1)}<\dots<m_{ik} \leq R} T_{s_{i(j+1)},m_{i(j+1)}} \dots T_{s_{ik},m_{ik}}.
\end{align*}
\end{itemize}

It is clear that \eqref{eq:equation for delta} is equivalent finding $(\delta_{\mathfrak t,\fe})_{(\mathfrak t;\fe) \in J} \in \Mat_{1 \times |J|}(\overline K[t])$ such that
\begin{equation} \label{eq: delta}
\delta_{\mathfrak t,\fe}=\delta_{\mathfrak t,\fe}^{(-R)} T_{w-w(\frak t),R}+\sum_{(\mathfrak t',\fe') \in J_\infty(\frak t,\fe)} \delta_{\mathfrak t',\fe'}^{(-R)}  \Phi'_{(\mathfrak t';\fe'),(\mathfrak t;\fe)}, \quad \text{for all } (\frak t,\fe) \in J \setminus \emptyset,
\end{equation}
and
\begin{equation} \label{eq: delta emptyset}
\delta_\emptyset=\delta_\emptyset^{(-R)} T_{w-w(\frak t),R}+\sum_{(\mathfrak t',\fe') \in J_\infty(\emptyset)} \delta_{\mathfrak t',\fe'}^{(-R)} \Phi'_{(\mathfrak t';\fe'),\emptyset}, \quad \text{for } (\frak t,\fe)=\emptyset.
\end{equation}
In fact, for $(\frak t,\fe)=(\fs_i,\fe_i)$, the corresponding equation becomes $\delta_{\fs_i,\fe_i}=\delta_{\fs_i,\fe_i}^{(-R)}$. Thus $\delta_{\fs_i,\fe_i}=a_i(t) \in k_N[t]$. 

\subsection{Solving Eqs. \eqref{eq: delta}: statement of the result} \ppar

Letting $y$ be a variable, we denote by $v_y$ the valuation associated to the place $y$ of the field $\Fq(y)$. We put
\begin{equation} \label{eq: T and X}
T:=t-t^q, \quad X:=t^q-\theta^q. 
\end{equation}

We are ready to solve Eqs. \eqref{eq: delta}:
\begin{proposition} \label{prop: solving first equations}
We keep the above notation. Then

\noindent 1) For all $(\mathfrak t,\fe) \in J \setminus \emptyset$, the polynomial $\delta_{\frak t,\fe}$ is of the form
	\[ \delta_{\frak t,\fe}=f_{\frak t,\fe} \left(X^{m_{\frak t}}+\sum_{i=0}^{m_{\frak t}-1} P_{(\frak t,\fe),i}(T) X^i \right)\]
where
\begin{itemize}
\item $f_{\frak t,\fe} \in k_N[t]$,
\item for all $0 \leq i \leq m_{\frak t}-1$, $P_{(\frak t,\fe),i}(y)$ belongs to $\Fq(y)$ with $v_y(P_{(\frak t,\fe),i}) \geq 1$.
\end{itemize}

\smallskip
\noindent 2) For all $(\mathfrak t,\fe) \in J \setminus \emptyset$ and all $(\frak t',\fe') \in J_1(\frak t,\fe)$, there exists $F_{(\frak t,\fe),(\frak t',\fe')} \in \Fq(y)$ such that
	\[ f_{\frak t',\fe'}=f_{\frak t,\fe} F_{(\frak t,\fe),(\frak t',\fe')}(T). \]
In particular, if $f_{\frak t,\fe}=0$, then $f_{\frak t',\fe'}=0$.

\smallskip
\noindent 3) For all $(\mathfrak t,\fe) \in J \setminus \emptyset$, if  for all $(\frak t',\fe') \in J_1(\frak t,\fe)$, we set 
\begin{align*}
\xi_{\mathfrak t,\fe} &=X^{m_{\frak t}}+\sum_{i=0}^{m_{\frak t}-1} P_{(\frak t,\fe),i}(T) X^i, \\
\xi_{\mathfrak t',\fe'} &=F_{(\frak t,\fe),(\frak t',\fe')}(T) \left(X^{m_{\frak t'}}+\sum_{i=0}^{m_{\frak t'}-1} P_{(\frak t',\fe'),i}(T) X^i \right).
\end{align*}
then
\begin{equation*} 
\xi_{\mathfrak t,\fe}=\xi_{\mathfrak t,\fe}^{(-1)} (t-\theta)^{w-w(\frak t)}+\sum_{(\mathfrak t',\fe') \in J_1(\frak t;\fe)} \xi_{\mathfrak t',\fe'}^{(-1)} (t-\theta)^{w-w(\frak t)}.
\end{equation*}
\end{proposition}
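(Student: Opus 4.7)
My approach is by descending induction on $w(\mathfrak t)$, starting from the deepest case $(\mathfrak t;\fe)=(\fs_i;\fe_i)$ and moving down to $\mathfrak t=\emptyset$. For the base case $w(\mathfrak t)=w$, one has $m_{\mathfrak t}=0$ and $J_\infty(\mathfrak t;\fe)=\emptyset$, so Eq.~\eqref{eq: delta} reduces to $\delta_{\fs_i,\fe_i}=\delta_{\fs_i,\fe_i}^{(-R)}$, giving $\delta_{\fs_i,\fe_i}=a_i(t)\in k_N[t]$; the claim holds with $f_{\fs_i,\fe_i}=a_i(t)$ and trivial polynomial factor $X^0=1$.

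For the inductive step, suppose the form has been established for all $(\mathfrak t';\fe')\in J_\infty(\mathfrak t;\fe)$. Substituting the inductive expressions into Eq.~\eqref{eq: delta} rewrites the inhomogeneous term as $F^{(-R)}$ with $F\in k_N[t,\theta]$, and Lemma~\ref{lem: KuanLin 2} (applied with $\epsilon=1$ and $m=w-w(\mathfrak t)$) yields $\delta_{\mathfrak t,\fe}\in k_N[t,\theta]$ with $\deg_\theta\delta_{\mathfrak t,\fe}\le qm_{\mathfrak t}$. Using the characteristic-$p$ identity $X=(t-\theta)^q$, I re-expand $\delta_{\mathfrak t,\fe}=\sum_{i=0}^{m_{\mathfrak t}}C_i(t)\,X^i$ with $C_i(t)\in k_N[t]$, and compare both sides of Eq.~\eqref{eq: delta} modulo successive powers of $X$. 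The leading coefficient $C_{m_{\mathfrak t}}$ is identified with $f_{\mathfrak t,\fe}\in k_N[t]$; the lower $C_i$'s are then pinned down by the resulting recursion, and since $T=t-t^q$ is $\sigma$-fixed, Frobenius-compatibility forces the ratios $C_i/C_{m_{\mathfrak t}}$ to be polynomials in $T$ with coefficients in $\Fq$ (not merely $k_N$), with positive $y$-valuation coming from the vanishing of the equation modulo $(T)$ under the inductive hypothesis.

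For part (2), pick $(\mathfrak t';\fe')\in J_1(\mathfrak t;\fe)$: the contribution of $\delta_{\mathfrak t',\fe'}^{(-R)}\Phi'_{(\mathfrak t';\fe'),(\mathfrak t;\fe)}$ to the top coefficient $C_{m_{\mathfrak t}}=f_{\mathfrak t,\fe}$ is $f_{\mathfrak t',\fe'}$ times an explicit rational function that must be $\sigma$-invariant, hence lies in $\Fq(T)$; this yields the claimed transition $F_{(\mathfrak t,\fe),(\mathfrak t',\fe')}(T)\in\Fq(T)$, and in particular $f_{\mathfrak t,\fe}=0$ forces $f_{\mathfrak t',\fe'}=0$. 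For part (3), the order-$R$ equation is the $R$-fold Frobenius iterate of the proposed order-$1$ equation, reflecting the factorization $T_{m,R}=\prod_{i=0}^{R-1}((t-\theta)^m)^{(-i)}$ as the norm of $(t-\theta)^m$ under $\sigma$; since $T$ is $\sigma$-invariant, a direct substitution using the explicit forms of $\xi_{\mathfrak t,\fe}$ and $\xi_{\mathfrak t',\fe'}$ from parts (1)--(2) verifies the order-$1$ identity termwise.

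The principal obstacle will be the careful bookkeeping in the inductive step: one must simultaneously control the polynomial-in-$X$ expansion of $\delta_{\mathfrak t,\fe}$, ensure its coefficients lie in $\Fq(T)$ rather than only $k_N(T)$, and verify that the cross-contributions from $J_\infty(\mathfrak t;\fe)\setminus J_1(\mathfrak t;\fe)$, which involve longer products of the form $T_{s_{ij},m_{ij}}(\epsilon_{ij})$, feed consistently into the lower $P_{(\mathfrak t,\fe),i}$'s. Handling these cross-terms and extracting the $\Fq(T)$-valuedness (as opposed to mere $k_N(T)$-valuedness) will constitute the bulk of the combinatorial work.
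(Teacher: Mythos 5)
Your high-level strategy matches the paper: ascending induction on $m_{\mathfrak t}$ (equivalently, descending on $w(\mathfrak t)$) starting from $(\mathfrak s_i,\mathbf\epsilon_i)$, applying Lemma~\ref{lem: KuanLin 2} to obtain the degree bound $\deg_\theta\delta_{\mathfrak t,\fe}\le q m_{\mathfrak t}$, and then expanding in powers of $X$ and $T$. However, the proposal has a genuine gap at the step you dismiss as bookkeeping. Before any coefficient comparison can be made tractable, the paper must first prove that the entire $J_\infty(\mathfrak t;\fe)$-indexed right-hand side of Eq.~\eqref{eq: delta} collapses to a $J_1(\mathfrak t;\fe)$-indexed sum, namely
\[
\sum_{(\mathfrak t',\fe')\in J_\infty(\mathfrak t,\fe)} \delta_{\mathfrak t',\fe'}^{(-R)}\Phi'_{(\mathfrak t';\fe'),(\mathfrak t;\fe)}
\;=\;
\sum_{(\mathfrak t',\fe')\in J_1(\mathfrak t;\fe)} f_{\mathfrak t',\fe'}\,\sum_{\ell=1}^R \xi_{\mathfrak t',\fe'}^{(-\ell)} T_{w-w(\mathfrak t),\ell}.
\]
This is Lemma~\ref{lem: first order}, which in turn rests on a non-trivial telescoping identity (Lemmas~\ref{lem: xi higher order} and \ref{lem: simplification}) using the multiplicativity $T_{s,m_1+m_2}=T_{s,m_1}(T_{s,m_2})^{(-m_1)}$ and the inductive one-step recursion for the $\xi$'s. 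Without that collapse, ``comparing modulo successive powers of $X$'' on Eq.~\eqref{eq: delta} directly does not yield the clean linear system $B|_{y=T}\,(f_*)=f_m (Q_*)|_{y=T}$ of the paper; one instead faces arbitrarily deep nested sums over $J_\infty$ whose simplification is precisely the content of those lemmas, not a routine bookkeeping task.

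There are two further issues worth flagging. First, the $P_{(\mathfrak t,\fe),i}$ are elements of $\Fq(y)$, i.e.\ \emph{rational} functions in $T$ with $v_y\ge 1$, not polynomials; your phrasing ``polynomials in $T$'' is too restrictive. Second, the claim that Frobenius-invariance of $T$ ``forces'' the coefficients into $\Fq$ rather than $k_N$ is closer to a restatement of the desired conclusion than an argument: in the paper it is extracted by observing that every entry of the matrix $B$ and of $(Q_*)$ lies in $\Fq(y)$ because the $T$-expansions of all $T_{s,\ell}$ and the inductively-known $P_{(\mathfrak t',\fe'),j}$ have $\Fq$-coefficients, so that $B$ invertible over $\Fq(y)$ pins the ratios $f_i/f_m$ in $\Fq(y)$. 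Finally, for part~(3) the logic is not a direct ``$R$-fold Frobenius iterate'': the paper first shows uniqueness of the solution to the order-$R$ equation up to a scalar in $k_N(t)$ (Proposition~\ref{prop: solving t epsilon 1}a), then constructs a solution of the \emph{order-$1$} equation with $k[t]$-coefficients (Proposition~\ref{prop: solving t epsilon 1}b, the $R=1$ case), verifies that this order-$1$ solution also solves the order-$R$ equation by an inductive twisting argument (Eq.~\eqref{eq: delta t,e 4}), and then invokes uniqueness. Your sketch inverts this order and omits the uniqueness step, so as written it does not establish part~(3).
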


This proposition states that there is a unique solution $\delta_{\frak t,\fe} \in k_N[t,\theta]$ for $(\mathfrak t,\fe) \in J \setminus \emptyset$ of Eqs. \eqref{eq: delta} up to a constant in $k_N(t)$. Further they admit a solution all belonging to $k[t,\theta]$ which satisfies $\sigma$-linear equations of order 1.

\subsection{Solving Eqs. \eqref{eq: delta}: proof of the result} \ppar

In this section we present a proof of Proposition \ref{prop: solving first equations}.

\subsubsection{Induction} 

The proof of Proposition \ref{prop: solving first equations} is by induction on $m_{\frak t}$. We start with $m_{\frak t}=0$. Then $\frak t=\fs_i$ and $\fe=\fe_i$ for some $i$. Since $\delta_{\fs_i,\fe_i}=\delta_{\fs_i,\fe_i}^{(-R)}$, we have observed that $\delta_{\fs_i,\fe_i}=a_i(t) \in k_N[t]$. Thus $\xi_{\fs_i,\fe_i}=1$ and we are done.

Suppose that Proposition \ref{prop: solving first equations} holds for all $(\frak t,\fe) \in J \setminus \emptyset$ with $m_{\frak t}<m$ for some $m \in \mathbb N$. We now prove the claim for all $(\frak t,\fe) \in J \setminus \emptyset$ with $m_{\frak t}=m$. In fact, we fix a pair $(\frak t,\fe) \in J \setminus \emptyset$ with $m_{\frak t}=m$ and want to find $\delta_{\frak t,\fe} \in \overline K[t]$ such that Eq. \eqref{eq: delta} for $(\frak t,\fe)$ is satisfied:
\begin{equation} \label{eq: delta t,e 0}
\delta_{\mathfrak t,\fe}=\delta_{\mathfrak t,\fe}^{(-R)} T_{w-w(\frak t),R}+\sum_{(\mathfrak t',\fe') \in J_\infty(\frak t,\fe)} \delta_{\mathfrak t',\fe'}^{(-R)}  \Phi'_{(\mathfrak t';\fe'),(\mathfrak t;\fe)}.
\end{equation}

The induction hypothesis implies
\begin{itemize}
\item[1)] For all $(\frak t',\fe') \in J_\infty(\frak t;\fe)$, we know that
	\[ \delta_{\frak t',\fe'}=f_{\frak t',\fe'} \left(X^{m_{\frak t'}}+\sum_{i=0}^{m_{\frak t'}-1} P_{(\frak t',\fe'),i}(T) X^i \right)\]
where
\begin{itemize}
\item $f_{\frak t',\fe'} \in k_N[t]$,
\item for all $0 \leq i \leq m_{\frak t'}-1$, $P_{(\frak t',\fe'),i}(y) \in \Fq(y)$ with $v_y(P_{(\frak t,\fe),i}) \geq 1$.
\end{itemize}

\item[2)] For all $(\mathfrak t',\fe') \in J_\infty(\frak t,\fe)$ and all $(\frak t'',\fe'') \in J_1(\frak t',\fe')$, there exists $F_{(\frak t',\fe'),(\frak t'',\fe'')} \in \Fq(y)$ such that
	\[ f_{\frak t'',\fe''}=f_{\frak t',\fe'} F_{(\frak t',\fe'),(\frak t'',\fe'')}(T). \]

\item[3)] For all $(\mathfrak t',\fe') \in J_\infty(\frak t,\fe)$ and all $(\frak t'',\fe'') \in J_1(\frak t',\fe')$, if we put
\begin{align*}
\xi_{\mathfrak t',\fe'} &=X^{m_{\frak t'}}+\sum_{i=0}^{m_{\frak t'}-1} P_{(\frak t',\fe'),i}(T) X^i, \\
\xi_{\mathfrak t'',\fe''} &=F_{(\frak t',\fe'),(\frak t'',\fe'')}(T) \left(X^{m_{\frak t''}}+\sum_{i=0}^{m_{\frak t''}-1} P_{\frak t'',i}(T) X^i \right),
\end{align*}
then $\delta_{\mathfrak t',\fe'} =f_{\frak t',\fe'} \xi_{\mathfrak t',\fe'}$, $\delta_{\mathfrak t'',\fe''} =f_{\frak t',\fe'} \xi_{\mathfrak t'',\fe''}$ and
\begin{equation} \label{eq: first order 2}
\xi_{\mathfrak t',\fe'}=\xi_{\mathfrak t',\fe'}^{(-1)} (t-\theta)^{w-w(\frak t')}+\sum_{(\mathfrak t'',\fe'') \in J_1(\frak t';\fe')} \xi_{\mathfrak t'',\fe''}^{(-1)} (t-\theta)^{w-w(\frak t')}.
\end{equation}
\end{itemize}

Hence we are reduced to solve Eq. \eqref{eq: delta t,e 0}, i.e., to find $\delta_{\frak t,\fe} \in \overline K[t]$ with the unknown parameters $f_{\frak t',\fe'} \in k_N[t]$ where $(\frak t',\fe')$ runs through the set $J_1(\frak t,\fe)$.

\subsubsection{Proof of Proposition \ref{prop: solving first equations} for $(\frak t,\fe)$: first part} \label{sec: t epsilon Part 1}

In this section we derive several useful formulas, in particular a simpler expression for the right-hand side of Eq. \eqref{eq: delta t,e 0} as a consequence of the induction hypothesis. 

We write 
\[ 
(\frak t;\fe)=(s_{i1},\dots,s_{ij};\epsilon_{i1},1,\dots,1) 
\] 
for some $i$ and $1 \leq j <\ell_i$. If $(\mathfrak t';\fe')=(s_{i1},\dots,s_{i k};\epsilon_{i1},1,\dots,1) \in J_\infty(\frak t;\fe)$ with $j < k \leq \ell_i$, we recall 
\begin{align*}
\Phi'_{(\mathfrak t';\fe'),(\mathfrak t;\fe)}=T_{w-w(\mathfrak t'),R} \sum_{0<m_{i(j+1)}<\dots<m_{ik} \leq R} T_{s_{i(j+1)},m_{i(j+1)}} \dots T_{s_{ik},m_{ik}}.
\end{align*}
If $(\mathfrak t',\fe') \in J_\infty(\frak t,\fe) \setminus J_1(\frak t,\fe)$, then we put $(\mathfrak t_0;\fe_0)=(s_{i1},\dots,s_{i (j+1)};\epsilon_{i1},1,\dots,1)$. Thus $(\mathfrak t_0;\fe_0) \in J_1(\frak t;\fe)$ and $(\mathfrak t',\fe') \in J_\infty(\frak t_0,\fe_0)$. 

\begin{lemma} \label{lem: xi higher order}
Let $(\mathfrak t';\fe') \in J_\infty(\frak t;\fe)$. Then for all $n \in \mathbb N$, we have
\begin{equation} \label{eq: xi higher order}
\xi_{\mathfrak t',\fe'}=\xi_{\mathfrak t',\fe'}^{(-n)} T_{w-w(\frak t'),n}+\sum_{(\mathfrak t'',\fe'') \in J_1(\frak t';\fe')} \sum_{\ell=1}^n  \xi_{\mathfrak t'',\fe''}^{(-\ell)} T_{w-w(\frak t'),\ell}.
\end{equation}
\end{lemma}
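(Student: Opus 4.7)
The plan is to prove Lemma \ref{lem: xi higher order} by induction on $n$, using Eq. \eqref{eq: first order 2} (the $n=1$ case supplied by the inductive hypothesis on $m_{\mathfrak t}$ in Proposition \ref{prop: solving first equations}) as the base case and iterating it via the recursion.

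\textbf{Base case.} For $n=1$, the equation reads
\[
\xi_{\mathfrak t',\fe'}=\xi_{\mathfrak t',\fe'}^{(-1)} T_{w-w(\mathfrak t'),1}+\sum_{(\mathfrak t'',\fe'')\in J_1(\mathfrak t';\fe')} \xi_{\mathfrak t'',\fe''}^{(-1)} T_{w-w(\mathfrak t'),1},
\]
which, since $T_{s,1}=(t-\theta)^s$, is exactly Eq. \eqref{eq: first order 2} applied to $(\mathfrak t',\fe')\in J_\infty(\mathfrak t;\fe)$. Note that $w(\mathfrak t')>w(\mathfrak t)$ implies $m_{\mathfrak t'}<m_{\mathfrak t}=m$, so the inductive hypothesis of Proposition \ref{prop: solving first equations} is indeed available for $(\mathfrak t',\fe')$ and for every $(\mathfrak t'',\fe'')\in J_1(\mathfrak t';\fe')\subset J_\infty(\mathfrak t;\fe)$.

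\textbf{Inductive step.} Assume the identity is established for some $n\geq 1$. Twisting Eq.~\eqref{eq: first order 2} (applied to $(\mathfrak t',\fe')$) by $-n$ yields
\[
\xi_{\mathfrak t',\fe'}^{(-n)}=\xi_{\mathfrak t',\fe'}^{(-n-1)} ((t-\theta)^{w-w(\mathfrak t')})^{(-n)}+\sum_{(\mathfrak t'',\fe'')\in J_1(\mathfrak t';\fe')} \xi_{\mathfrak t'',\fe''}^{(-n-1)} ((t-\theta)^{w-w(\mathfrak t')})^{(-n)}.
\]
Substituting this into the inductive hypothesis for $n$ and using the identity
\[
T_{w-w(\mathfrak t'),n}\cdot \bigl((t-\theta)^{w-w(\mathfrak t')}\bigr)^{(-n)} = T_{w-w(\mathfrak t'),n+1},
\]
which is immediate from the definition of $T_{s,k}$ in Eq. \eqref{eq: Tsk}, gives
\[
\xi_{\mathfrak t',\fe'}=\xi_{\mathfrak t',\fe'}^{(-n-1)} T_{w-w(\mathfrak t'),n+1}+\sum_{(\mathfrak t'',\fe'')\in J_1(\mathfrak t';\fe')} \xi_{\mathfrak t'',\fe''}^{(-n-1)} T_{w-w(\mathfrak t'),n+1} +\sum_{(\mathfrak t'',\fe'')\in J_1(\mathfrak t';\fe')} \sum_{\ell=1}^n  \xi_{\mathfrak t'',\fe''}^{(-\ell)} T_{w-w(\mathfrak t'),\ell}.
\]
Merging the last two sums into $\sum_{\ell=1}^{n+1}$ yields exactly the desired formula at level $n+1$.

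\textbf{Main obstacle.} There is essentially no serious obstacle: the proof is a formal iteration of the order-$1$ recursion. The only care required is bookkeeping, namely (i) noting that the inductive hypothesis of Proposition \ref{prop: solving first equations} legitimately applies to every $(\mathfrak t',\fe')\in J_\infty(\mathfrak t;\fe)$ since $m_{\mathfrak t'}<m$, and (ii) using the telescoping identity $T_{s,n}\cdot ((t-\theta)^{s})^{(-n)}=T_{s,n+1}$ in the correct twist direction. Both are straightforward, so Lemma \ref{lem: xi higher order} follows without further difficulty.
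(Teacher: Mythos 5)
Your proof is correct and is essentially the same as the paper's: both arguments proceed by induction on $n$, combining the order-$1$ relation~\eqref{eq: first order 2} with the inductive hypothesis via the telescoping property of $T_{s,k}$. The only cosmetic difference is the direction of substitution — the paper twists the inductive hypothesis once and plugs it into~\eqref{eq: first order 2}, whereas you twist~\eqref{eq: first order 2} by $n$ and plug it into the inductive hypothesis — but these are symmetric manipulations yielding the same computation.
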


\begin{proof}
The proof is by induction on $n$. For $n=1$, Eq. \eqref{eq: xi higher order} is exactly Eq. \eqref{eq: first order 2} and we are done. Suppose that Eq. \eqref{eq: xi higher order} holds for some $n \in \mathbb N$. We prove that it still holds for $n+1$. Twisting Eq. \eqref{eq: xi higher order} once and putting this equality into Eq. \eqref{eq: first order 2} gets
\begin{align*} 
\xi_{\mathfrak t',\fe'} &=\xi_{\mathfrak t',\fe'}^{(-1)} (t-\theta)^{w-w(\frak t')}+\sum_{(\mathfrak t'',\fe'') \in J_1(\frak t';\fe')} \xi_{\mathfrak t'',\fe''}^{(-1)} (t-\theta)^{w-w(\frak t')} \\
&=(t-\theta)^{w-w(\frak t')} \xi_{\mathfrak t',\fe'}^{(-n-1)} T_{w-w(\frak t'),n}^{(-1)}+(t-\theta)^{w-w(\frak t')} \sum_{(\mathfrak t'',\fe'') \in J_1(\frak t';\fe')} \sum_{\ell=1}^n  \xi_{\mathfrak t'',\fe''}^{(-\ell-1)} T_{w-w(\frak t'),\ell}^{(-1)} \\
& \quad +\sum_{(\mathfrak t'',\fe'') \in J_1(\frak t';\fe')} \xi_{\mathfrak t'',\fe''}^{(-1)} (t-\theta)^{w-w(\frak t')} \\
&=T_{w-w(\frak t'),n+1} \xi_{\mathfrak t',\fe'}^{(-n-1)} + \sum_{(\mathfrak t'',\fe'') \in J_1(\frak t';\fe')} \sum_{\ell=1}^n  \xi_{\mathfrak t'',\fe''}^{(-\ell-1)} T_{w-w(\frak t'),\ell+1} \\
& \quad +\sum_{(\mathfrak t'',\fe'') \in J_1(\frak t';\fe')} \xi_{\mathfrak t'',\fe''}^{(-1)} (t-\theta)^{w-w(\frak t')} \\
&=T_{w-w(\frak t'),n+1} \xi_{\mathfrak t',\fe'}^{(-n-1)} + \sum_{(\mathfrak t'',\fe'') \in J_1(\frak t';\fe')} \sum_{\ell=1}^{n+1}  \xi_{\mathfrak t'',\fe''}^{(-\ell-1)} T_{w-w(\frak t'),\ell+1}.
\end{align*}
The proof is complete.
\end{proof}

\begin{lemma} \label{lem: simplification}
We keep the above notation. Let $(\mathfrak t';\fe')=(s_{i1},\dots,s_{i k};\epsilon_{i1},1,\dots,1) \in J_\infty(\frak t;\fe)$ with $j < k \leq \ell_i$. Then for all $n \in \mathbb N$, we have
\begin{align*}
& \sum  \xi_{\mathfrak t'',\fe''}^{(-n)}  T_{w-w(\mathfrak t''),n} \sum_{0<m_{i(k+1)}<\dots<m_{i \ell} \leq n} T_{s_{i(k+1)},m_{i(k+1)}} \dots T_{s_{i \ell},m_{i \ell}} \\
& \quad = \xi_{\mathfrak t',\fe'}- \xi_{\mathfrak t',\fe'}^{(-n)}  T_{w-w(\frak t'),n}
\end{align*}
where the first sum runs through the set of all $(\mathfrak t'',\fe'')=(s_{i1},\dots,s_{i \ell};\epsilon_{i1},1,\dots,1) \in J_\infty(\frak t',\fe')$.
\end{lemma}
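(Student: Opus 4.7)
The plan is to prove the identity by induction on $n \in \mathbb{N}$, with the main ingredients being Lemma~\ref{lem: xi higher order}, Eq.~\eqref{eq: first order 2}, and the multiplicative property $T_{a,n+1}\,T_{b,n+1} = T_{a+b,n+1}$ of the twisted products (immediate from the definition of $T_{s,k}$ in Eq.~\eqref{eq: Tsk}). Denote the left-hand side by $L_n$ and put $R_n := \xi_{\mathfrak t', \fe'} - \xi_{\mathfrak t', \fe'}^{(-n)}\,T_{w-w(\frak t'), n}$.

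For the base case $n = 1$, the constraint $0 < m_{i(k+1)} < \dots < m_{i\ell} \leq 1$ forces $\ell = k+1$, so the outer sum collapses to a sum over $J_1(\frak t', \fe') \subset J_\infty(\frak t', \fe')$ with inner factor $T_{s_{i(k+1)}, 1} = (t-\theta)^{s_{i(k+1)}}$. After telescoping weights, the remaining identity $L_1 = R_1$ is exactly Eq.~\eqref{eq: first order 2}.

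For the inductive step, I would split the inner sum of $L_{n+1}$ according to whether $m_{i\ell} \leq n$ or $m_{i\ell} = n+1$. For the first piece, the factorization $T_{w-w(\frak t''), n+1} = T_{w-w(\frak t''), n}\cdot((t-\theta)^{w-w(\frak t'')})^{(-n)}$ lets me substitute the $(-n)$-twist of Eq.~\eqref{eq: first order 2} for the coefficient $\xi_{\frak t'', \fe''}^{(-n-1)}((t-\theta)^{w-w(\frak t'')})^{(-n)}$; this rewrites the first piece as $L_n$ minus a correction indexed by pairs $(\frak t'', \fe'') \in J_\infty(\frak t', \fe')$ together with $(\frak t_e, \fe_e) \in J_1(\frak t'', \fe'')$. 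For the second piece, pulling out the factor $T_{s_{iL}, n+1}$ (where $L$ is the length of $\frak t''$) and using $T_{a,n+1}T_{b,n+1}=T_{a+b,n+1}$ reindexes the sum via the immediate predecessor $\hat{\frak t}'' \in \{\frak t'\} \cup J_\infty(\frak t', \fe')$ and its one-step extension inside $J_1(\hat{\frak t}'', \hat{\fe}'')$. The resulting expression has identically the same shape as the correction from the first piece, except that it also includes the contribution from $\hat{\frak t}''=\frak t'$; everything else cancels, leaving
\begin{equation*}
L_{n+1} - L_n \;=\; T_{w-w(\frak t'), n+1}\sum_{(\frak t'', \fe'') \in J_1(\frak t', \fe')} \xi_{\frak t'', \fe''}^{(-n-1)}.
\end{equation*}

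To close the induction, I would compute $R_{n+1} - R_n$ by twisting Eq.~\eqref{eq: first order 2} by $-n$ and multiplying by $T_{w-w(\frak t'), n}$, using once more that $T_{w-w(\frak t'), n+1} = T_{w-w(\frak t'), n}\cdot((t-\theta)^{w-w(\frak t')})^{(-n)}$; this yields exactly the same formula, so the inductive hypothesis $L_n = R_n$ forces $L_{n+1} = R_{n+1}$. The expected main obstacle is the bookkeeping of the cancellation between the two pieces of $L_{n+1}$: it hinges on the natural bijection between the pairs $\bigl((\frak t'', \fe''),\,(\frak t_e, \fe_e) \in J_1(\frak t'', \fe'')\bigr)$ that come out of the Eq.~\eqref{eq: first order 2}-substitution and the pairs $\bigl(\hat{\frak t}'',\,(\frak t'', \fe'') \in J_1(\hat{\frak t}'', \hat{\fe}'')\bigr)$ produced by contracting the last coordinate of $\frak t''$, together with the compatibility $T_{w-w(\frak t''), n+1}\cdot T_{s_{iL}, n+1} = T_{w-w(\hat{\frak t}''), n+1}$ that makes the $T$-factors align.
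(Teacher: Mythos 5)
Your proof is correct and takes a genuinely different route from the paper. The paper's proof is a \emph{strong} induction on $n$: it partitions $J_\infty(\frak t',\fe')$ as $J_1(\frak t',\fe') \cup \bigcup_{(\frak t_0,\fe_0) \in J_1(\frak t',\fe')} J_\infty(\frak t_0,\fe_0)$, re-indexes the inner sum by the \emph{first} variable $m_{i(k+1)}$ (via the change of variables $m''_{i\bullet} = m_{i\bullet} - m_{i(k+1)}$), reduces to $\mathcal S(\frak t_0,\fe_0,n-m_{i(k+1)})^{(-m_{i(k+1)})}$ for $m_{i(k+1)} < n$, invokes the inductive hypothesis for all smaller $r$, and finishes by citing Lemma~\ref{lem: xi higher order}. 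Your proof instead splits by the \emph{last} variable $m_{i\ell}\le n$ versus $m_{i\ell}=n+1$, computes $L_{n+1}-L_n$ and $R_{n+1}-R_n$ separately, and closes with an \emph{ordinary} one-step induction. The cancellation between the two pieces relies on the bijection $\bigl(\frak t'',\,\frak t_e\in J_1(\frak t'',\fe'')\bigr) \leftrightarrow \gamma=\frak t_e$ with $\hat\gamma=\frak t''$ (the unmatched $\gamma$ with $\hat\gamma=\frak t'$ producing exactly the surviving term), and on the two factorizations $T_{a,n+1}=T_{a,n}\,((t-\theta)^a)^{(-n)}$ and $T_{a,n+1}T_{b,n+1}=T_{a+b,n+1}$ — all of which check out. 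Your approach avoids both the auxiliary Lemma~\ref{lem: xi higher order} (you list it among your tools, but in the argument as written it is never actually invoked; Eq.~\eqref{eq: first order 2} and its $(-n)$-twist suffice) and the non-trivial change of variables, at the cost of the more delicate index-set bookkeeping you yourself flag; the paper's version buys a cleaner recursion at the cost of a strong induction and an extra lemma.
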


\begin{proof}
Letting $n \in \mathbb N$ and $(\mathfrak t';\fe')=(s_{i1},\dots,s_{i k};\epsilon_{i1},1,\dots,1) \in J_\infty(\frak t;\fe)$ with $j < k \leq \ell_i$, we put
\begin{align*}
\mathcal S(\frak t',\fe',n):=\sum  \xi_{\mathfrak t'',\fe''}^{(-n)}  T_{w-w(\mathfrak t''),n} \sum_{0<m_{i(k+1)}<\dots<m_{i \ell} \leq n} T_{s_{i(k+1)},m_{i(k+1)}} \dots T_{s_{i \ell},m_{i \ell}}
\end{align*}
where the first sum runs through the set of all $(\mathfrak t'',\fe'')=(s_{i1},\dots,s_{i \ell};\epsilon_{i1},1,\dots,1) \in J_\infty(\frak t',\fe')$. 

We want to show that for all $n \in \mathbb N$ and all $(\mathfrak t';\fe') \in J_\infty(\frak t;\fe)$,
\begin{equation} \label{eq: Hn}
\mathcal S(\frak t',\fe',n)= \xi_{\mathfrak t',\fe'}- \xi_{\mathfrak t',\fe'}^{(-n)}  T_{w-w(\frak t'),n}.
\end{equation}

The proof is by induction on $n \in \mathbb N$. For $n=1$ we are done by Eq. \eqref{eq: first order 2} applied to $(\mathfrak t';\fe') \in J_\infty(\frak t;\fe)$. Let $n \in \mathbb N$. Suppose that for all $r<n$ and all $(\mathfrak t';\fe') \in J_\infty(\frak t;\fe)$, Eq. \eqref{eq: Hn} holds. We claim that for all $(\mathfrak t';\fe') \in J_\infty(\frak t;\fe)$,
\begin{equation*}
\mathcal S(\frak t',\fe',n)= \xi_{\mathfrak t',\fe'}- \xi_{\mathfrak t',\fe'}^{(-n)}  T_{w-w(\frak t'),n}.
\end{equation*}

In fact, let $(\mathfrak t';\fe')=(s_{i1},\dots,s_{i k};\epsilon_{i1},1,\dots,1) \in J_\infty(\frak t;\fe)$ with $j < k \leq \ell_i$. We consider the following partition of $J_\infty(\frak t',\fe')$
\begin{equation*}
J_\infty(\frak t',\fe') = J_1(\frak t',\fe') \ \cup \ \bigcup_{ (\frak t_0,\fe_0) \in J_1(\frak t',\fe')} J_\infty(\frak t_0,\fe_0).
\end{equation*}
and express $\mathcal S(\frak t',\fe',n)$ as a sum of terms induced by this partition:
\begin{equation*}
\mathcal S(\frak t',\fe',n)= \sum_{(\mathfrak t_0,\fe_0) \in J_1(\frak t',\fe')} + \sum_{(\mathfrak t_0,\fe_0) \in J_1(\frak t',\fe')} \sum_{(\mathfrak t'',\fe'') \in J_\infty(\frak t_0,\fe_0)}.
\end{equation*}

In the following we fix $(\frak t_0,\fe_0)=(s_{i1},\dots,s_{i (k+1)};\epsilon_{i1},1,\dots,1) \in J_1(\frak t',\fe')$ and consider the sums running through the set of all $(\mathfrak t'',\fe'')=(s_{i1},\dots,s_{i \ell};\epsilon_{i1},1,\dots,1) \in J_\infty(\frak t_0,\fe_0)$:
\begin{align*}
&\sum_{(\mathfrak t'',\fe'') \in J_\infty(\frak t_0,\fe_0)} \xi_{\mathfrak t'',\fe''}^{(-n)} T_{w-w(\mathfrak t''),n} \sum_{0<m_{i(k+1)}<\dots<m_{i\ell} \leq n} T_{s_{i(k+1)},m_{i(k+1)}} \dots T_{s_{i\ell},m_{i\ell}} \\
&=\sum_{(\mathfrak t'',\fe'') \in J_\infty(\frak t_0,\fe_0)} \sum_{0<m_{i(k+1)} < n}  \xi_{\mathfrak t'',\fe''}^{(-n)} T_{w-w(\mathfrak t''),n} T_{s_{i(k+1)},m_{i(k+1)}} \sum_{m_{i(k+1)}<\dots<m_{i\ell} \leq n} T_{s_{i(k+2)},m_{i(k+2)}} \dots T_{s_{i\ell},m_{i\ell}} \\
&= \sum_{(\mathfrak t'',\fe'') \in J_\infty(\frak t_0,\fe_0)} \sum_{0<m_{i(k+1)} < n}  \xi_{\mathfrak t'',\fe''}^{(-n)} T_{s_{i(k+1)},m_{i(k+1)}} (T_{w-w(\mathfrak t''),m_{i(k+1)}} (T_{w-w(\mathfrak t''),n-m_{i(k+1)}})^{(-m_{i(k+1)})}) \\
& \quad \times \sum_{m_{i(k+1)}<\dots<m_{i\ell} \leq n} (T_{s_{i(k+2)},m_{i(k+1)}} (T_{s_{i(k+2)},m_{i(k+2)}-m_{i(k+1)}})^{(-m_{i(k+1)})}) \dots (T_{s_{i\ell},m_{i(k+1)}} (T_{s_{i\ell},m_{i\ell}-m_{i(k+1)}})^{(-m_{i(k+1)})}) \\
&= \sum_{(\mathfrak t'',\fe'') \in J_\infty(\frak t_0,\fe_0)} \sum_{0<m_{i(k+1)} < n}  \xi_{\mathfrak t'',\fe''}^{(-n)} T_{w-w(\frak t'),m_{i(k+1)}} (T_{w-w(\mathfrak t''),n-m_{i(k+1)}})^{(-m_{i(k+1)})} \\
& \quad \times \sum_{0<m_{i(k+2)}''<\dots<m_{i\ell}'' \leq n-m_{i(k+1)}} (T_{s_{i(k+2)},m_{i(k+2)}''})^{(-m_{i(k+1)})} \dots (T_{s_{i\ell},m_{i\ell}''})^{(-m_{i(k+1)})} \\
&=\sum_{(\mathfrak t'',\fe'') \in J_\infty(\frak t_0,\fe_0)} \sum_{0<m_{i(k+1)} < n} T_{w-w(\frak t'),m_{i(k+1)}} \\
& \quad \times \left( \xi_{\mathfrak t'',\fe''}^{(-(n-m_{i(k+1)}))}  T_{w-w(\mathfrak t''),n-m_{i(k+1)}} \sum_{0<m_{i(k+2)}''<\dots<m_{i\ell}'' \leq n-m_{i(k+1)}} T_{s_{i(k+2)},m_{i(k+2)}''} \dots T_{s_{i\ell},m_{i\ell}''} \right)^{(-m_{i(k+1)})} \\
&= \sum_{0<m_{i(k+1)} < n} T_{w-w(\frak t'),m_{i(k+1)}}  \mathcal S(\frak t_0,\fe_0,n-m_{i(k+1)})^{(-m_{i(k+1)})}.
\end{align*}
The induction hypothesis implies
\begin{align*}
&\sum_{(\mathfrak t'',\fe'') \in J_\infty(\frak t_0,\fe_0)} \xi_{\mathfrak t'',\fe''}^{(-n)} T_{w-w(\mathfrak t''),n} \sum_{0<m_{i(k+1)}<\dots<m_{i\ell} \leq n} T_{s_{i(k+1)},m_{i(k+1)}} \dots T_{s_{i\ell},m_{i\ell}} \\
&= \sum_{0<m < n} T_{w-w(\frak t'),m}  \mathcal S(\frak t_0,\fe_0,n-m)^{(-m)} \\
&= \sum_{0<m < n} T_{w-w(\frak t'),m}  (\xi_{\mathfrak t_0,\fe_0}- \xi_{\mathfrak t_0,\fe_0}^{(-m)}  T_{w-w(\frak t_0),n-m})^{(-m)} \\
&= \sum_{0<m < n} \left(T_{w-w(\frak t'),m}  \xi_{\mathfrak t_0,\fe_0}^{(-m)}- T_{s_{i (k+1)},m} T_{w-w(\frak t_0),n} \xi_{\mathfrak t_0,\fe_0}^{(-n)} \right).
\end{align*}

Putting altogether, we obtain
\begin{align*}
& \mathcal S(\frak t',\fe',n) \\
&= \sum_{(\mathfrak t_0,\fe_0) \in J_1(\frak t',\fe')} \xi_{\mathfrak t_0,\fe_0}^{(-n)}  T_{w-w(\mathfrak t_0),n} \sum_{0<m \leq n} T_{s_{i(k+1)},m} \\
&\quad + \sum_{(\mathfrak t_0,\fe_0) \in J_1(\frak t',\fe')} \sum_{0<m < n} \left(T_{w-w(\frak t'),m}  \xi_{\mathfrak t_0,\fe_0}^{(-m)}- T_{s_{i (k+1)},m} T_{w-w(\frak t_0),n} \xi_{\mathfrak t_0,\fe_0}^{(-n)} \right) \\
&= \sum_{(\mathfrak t_0,\fe_0) \in J_1(\frak t',\fe')} \xi_{\mathfrak t_0,\fe_0}^{(-n)}  T_{w-w(\mathfrak t_0),n} \sum_{0<m \leq n} T_{s_{i(k+1)},m} \\
&\quad + \sum_{(\mathfrak t_0,\fe_0) \in J_1(\frak t',\fe')} \sum_{0<m < n} \left(T_{w-w(\frak t'),m}  \xi_{\mathfrak t_0,\fe_0}^{(-m)}- T_{s_{i (k+1)},m} T_{w-w(\frak t_0),n} \xi_{\mathfrak t_0,\fe_0}^{(-n)} \right) \\
&= \sum_{(\mathfrak t_0,\fe_0) \in J_1(\frak t',\fe')} \sum_{m=1}^n \xi_{\mathfrak t_0,\fe_0}^{(-m)} T_{w-w(\frak t'),m}.
\end{align*}
By Lemma \ref{lem: xi higher order} we get the desired equality
\begin{align*}
\mathcal S(\frak t',\fe',n) = \sum_{(\mathfrak t_0,\fe_0) \in J_1(\frak t',\fe')} \sum_{m=1}^n \xi_{\mathfrak t_0,\fe_0}^{(-m)} T_{w-w(\frak t'),m} = \xi_{\mathfrak t',\fe'}- \xi_{\mathfrak t',\fe'}^{(-n)}  T_{w-w(\frak t'),n}.
\end{align*}
\end{proof}

\begin{lemma} \label{lem: first order}
We keep the above notation. Letting $(\mathfrak t_0;\fe_0) \in J_1(\frak t;\fe)$ we put
\begin{align*}
S(\frak t_0,\fe_0)&:=\sum_{(\mathfrak t',\fe') \in \{(\mathfrak t_0;\fe_0)\} \cup J_\infty(\frak t_0,\fe_0)} \xi_{\mathfrak t',\fe'}^{(-R)}  \Phi'_{(\mathfrak t';\fe'),(\mathfrak t;\fe)}.
\end{align*} 
Then
\begin{equation*} 
S(\frak t_0,\fe_0)=\sum_{m=1}^R \xi_{\mathfrak t_0,\fe_0}^{(-m)} T_{w-w(\frak t),m}.
\end{equation*} 
\end{lemma}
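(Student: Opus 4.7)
The plan is to swap the $m_{i(j+1)}$-summation inside each $\Phi'_{(\mathfrak t';\fe'),(\mathfrak t;\fe)}$ to the outside so that the remaining factors can be collapsed by Lemma \ref{lem: simplification} applied to $(\mathfrak t_0;\fe_0)$. Write $(\mathfrak t;\fe)=(s_{i1},\dots,s_{ij};\epsilon_{i1},1,\dots,1)$ so that $(\mathfrak t_0;\fe_0)=(s_{i1},\dots,s_{i(j+1)};\epsilon_{i1},1,\dots,1)$, and every $(\mathfrak t';\fe')\in\{(\mathfrak t_0;\fe_0)\}\cup J_\infty(\mathfrak t_0,\fe_0)$ has the form $(s_{i1},\dots,s_{iK};\epsilon_{i1},1,\dots,1)$ with $K\ge j+1$.

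The first step is to establish the rewriting
\[
\Phi'_{(\mathfrak t';\fe'),(\mathfrak t;\fe)}=\sum_{m=1}^{R}T_{w-w(\mathfrak t),m}\left(T_{w-w(\mathfrak t'),R-m}\sum_{0<n_{j+2}<\dots<n_{K}\le R-m}T_{s_{i(j+2)},n_{j+2}}\cdots T_{s_{iK},n_{K}}\right)^{(-m)},
\]
where for $K=j+1$ the inner sum is empty and is taken to be $1$. This follows from the substitution $m:=m_{i(j+1)}$ and $n_\ell:=m_{i\ell}-m$ for $\ell\ge j+2$, together with repeated use of $T_{s,a+b}=T_{s,a}(T_{s,b})^{(-a)}$; the weight identity $s_{i(j+1)}+(w-w(\mathfrak t'))+(s_{i(j+2)}+\dots+s_{iK})=w-w(\mathfrak t)$ is what collapses the three factors $T_{s_{i(j+1)},m}$, $T_{w-w(\mathfrak t'),m}$, and $T_{s_{i(j+2)}+\dots+s_{iK},m}$ into the single common factor $T_{w-w(\mathfrak t),m}$.

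Substituting this into the defining expression for $S(\mathfrak t_0,\fe_0)$, writing $\xi_{\mathfrak t',\fe'}^{(-R)}=(\xi_{\mathfrak t',\fe'}^{(-(R-m))})^{(-m)}$, and pulling $T_{w-w(\mathfrak t),m}$ out of the $(-m)$-twist, I obtain
\[
S(\mathfrak t_0,\fe_0)=\sum_{m=1}^{R}T_{w-w(\mathfrak t),m}\,\mathcal B(m)^{(-m)},
\]
where $\mathcal B(m)$ is
\[
\xi_{\mathfrak t_0,\fe_0}^{(-(R-m))}T_{w-w(\mathfrak t_0),R-m}+\sum_{(\mathfrak t'';\fe'')\in J_\infty(\mathfrak t_0,\fe_0)}\xi_{\mathfrak t'',\fe''}^{(-(R-m))}T_{w-w(\mathfrak t''),R-m}\sum_{0<n_{j+2}<\dots<n_{K}\le R-m}T_{s_{i(j+2)},n_{j+2}}\cdots T_{s_{iK},n_{K}}.
\]

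The final step is to apply Lemma \ref{lem: simplification} with $(\mathfrak t_0;\fe_0)$ playing the role of $(\mathfrak t';\fe')$ and with $n=R-m$: the double sum over $J_\infty(\mathfrak t_0,\fe_0)$ in $\mathcal B(m)$ equals $\xi_{\mathfrak t_0,\fe_0}-\xi_{\mathfrak t_0,\fe_0}^{(-(R-m))}T_{w-w(\mathfrak t_0),R-m}$ whenever $R-m\ge 1$, and vanishes trivially when $m=R$ (the inner sum is empty). In both cases $\mathcal B(m)=\xi_{\mathfrak t_0,\fe_0}$, whence
\[
S(\mathfrak t_0,\fe_0)=\sum_{m=1}^{R}T_{w-w(\mathfrak t),m}\,\xi_{\mathfrak t_0,\fe_0}^{(-m)},
\]
as required. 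The only real obstacle is the careful bookkeeping of the shift $m_{i\ell}\mapsto m_{i\ell}-m$ across the several $T$-factors in the first step; once the common factor $T_{w-w(\mathfrak t),m}$ has emerged, the reduction to Lemma \ref{lem: simplification} is automatic.
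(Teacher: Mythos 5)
Your proof is correct and essentially the same as the paper's: both rest on the $T$-factorization $T_{s,a+b}=T_{s,a}(T_{s,b})^{(-a)}$, the weight identity $w-w(\mathfrak t')+s_{i(j+1)}+\cdots+s_{iK}=w-w(\mathfrak t)$ that collapses the $T$-factors into the common $T_{w-w(\mathfrak t),m}$, and Lemma \ref{lem: simplification}. The only difference is organizational: the paper splits $S(\mathfrak t_0,\fe_0)$ into $S_1+S_2$ (the $(\mathfrak t_0,\fe_0)$ term and the $J_\infty(\mathfrak t_0,\fe_0)$ terms), applies Lemma \ref{lem: simplification} to $S_2$, and then recombines with a cancellation, whereas you interchange the order of summation to group by the common index $m=m_{i(j+1)}$ first, so each inner $\mathcal B(m)$ collapses directly to $\xi_{\mathfrak t_0,\fe_0}$ without any cancellation to track.
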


\begin{proof}
We have
\begin{align*}
S(\frak t_0,\fe_0) &= \sum_{(\mathfrak t',\fe') \in \{(\mathfrak t_0;\fe_0)\} \cup J_\infty(\frak t_0,\fe_0)} \xi_{\mathfrak t',\fe'}^{(-R)}  \Phi'_{(\mathfrak t';\fe'),(\mathfrak t;\fe)} \\
&=\sum_{(\mathfrak t',\fe') \in \{(\mathfrak t_0;\fe_0)\} \cup J_\infty(\frak t_0,\fe_0)} \xi_{\mathfrak t',\fe'}^{(-R)} T_{w-w(\mathfrak t'),R} \sum_{0<m_{i(j+1)}<\dots<m_{ik} \leq R} T_{s_{i(j+1)},m_{i(j+1)}} \dots T_{s_{ik},m_{ik}}.
\end{align*}
By direct calculations as in the proof of Lemma \ref{lem: simplification} we get
\begin{align*}
& S(\frak t_0,\fe_0)=\sum_{(\mathfrak t',\fe') \in \{(\mathfrak t_0;\fe_0)\} \cup J_\infty(\frak t_0,\fe_0)} \sum_{0<m_{i(j+1)} < R} T_{w-w(\frak t),m_{i(j+1)}} \\
& \quad \times \left( \xi_{\mathfrak t',\fe'}^{(-(R-m_{i(j+1)}))}  T_{w-w(\mathfrak t'),R-m_{i(j+1)}} \sum_{0<m_{i(j+2)}'<\dots<m_{ik}' \leq R-m_{i(j+1)}} T_{s_{i(j+2)},m_{i(j+2)}'} \dots T_{s_{ik},m_{ik}'} \right)^{(-m_{i(j+1)})}.
\end{align*}

We use the decomposition
\[ \sum_{(\mathfrak t',\fe') \in \{(\mathfrak t_0;\fe_0)\} \cup J_\infty(\frak t_0,\fe_0)}=\sum_{(\mathfrak t',\fe')=(\frak t_0,\fe_0)} +\sum_{(\mathfrak t',\fe') \in J_\infty(\frak t_0,\fe_0)} \]
to express 
\[
S(\frak t_0,\fe_0)=S_1(\frak t_0,\fe_0)+S_2(\frak t_0,\fe_0). 
\]
We analyze each term of the previous sum. For the first term, we get
\begin{align*}
S_1(\frak t_0,\fe_0) &=\xi_{\frak t_0, \fe_0}^{(-R)} T_{w-w(\mathfrak t_0),R} \sum_{0<m_{i(j+1)} \leq R} T_{s_{i(j+1)},m_{i(j+1)}} \\
&= \sum_{0<m_{i(j+1)} < R} \xi_{\frak t_0,\fe_0}^{(-R)} T_{w-w(\mathfrak t_0),R} T_{s_{i(j+1)},m_{i(j+1)}} +  \xi_{\frak t_0,\fe_0}^{(-R)} T_{w-w(\mathfrak t),R}.
\end{align*}
For the second term,
\begin{align*}
& S_2(\frak t_0,\fe_0)\\
&=\sum_{(\mathfrak t',\fe') \in J_\infty(\frak t_0,\fe_0)} \sum_{0<m_{i(j+1)} < R} T_{w-w(\frak t),m_{i(j+1)}} \\
& \quad \times \left( \xi_{\mathfrak t',\fe'}^{(-(R-m_{i(j+1)}))}  T_{w-w(\mathfrak t'),R-m_{i(j+1)}} \sum_{0<m_{i(j+2)}'<\dots<m_{ik}' \leq R-m_{i(j+1)}} T_{s_{i(j+2)},m_{i(j+2)}'} \dots T_{s_{ik},m_{ik}} \right)^{(-m_{i(j+1)})} \\
&=\sum_{0<m_{i(j+1)} < R} T_{w-w(\frak t),m_{i(j+1)}} \\
& \quad \times \left( \sum_{(\mathfrak t',\fe') \in J_\infty(\frak t_0,\fe_0)}  \xi_{\mathfrak t',\fe'}^{(-(R-m_{i(j+1)}))}  T_{w-w(\mathfrak t'),R-m_{i(j+1)}} 
\hspace{-2em} 
\sum_{0<m_{i(j+2)}'<\dots<m_{ik}' \leq R-m_{i(j+1)}} 
\hspace{-2em} 
T_{s_{i(j+2)},m_{i(j+2)}'} \dots T_{s_{ik},m_{ik}} \right)^{(-m_{i(j+1)})}.
\end{align*}
By Lemma \ref{lem: simplification} we get
\begin{align*}
S_2(\frak t_0,\fe_0)&= \sum_{0<m_{i(j+1)} < R} T_{w-w(\frak t),m_{i(j+1)}}  \left( \xi_{\mathfrak t_0,\fe_0}- \xi_{\mathfrak t_0,\fe_0}^{(-(R-m_{i(j+1)}))}  T_{w-w(\frak t_0),R-m_{i(j+1)}} \right)^{(-m_{i(j+1)})} \\
&= \sum_{0<m_{i(j+1)} < R} \left( T_{w-w(\frak t),m_{i(j+1)}} \xi_{\mathfrak t_0,\fe_0}^{(-m_{i(j+1)})} - T_{s_{i(j+1)},m_{i(j+1)}} T_{w-w(\frak t_0),R} \xi_{\mathfrak t_0,\fe_0}^{(-R)}   \right).
\end{align*}

Putting altogether, we obtain the desired equality:
\begin{align*}
S(\frak t_0,\fe_0)&=S_1(\frak t_0,\fe_0)+S_2(\frak t_0,\fe_0) \\
&=T_{w-w(\mathfrak t),R} \xi_{\mathfrak t_0,\fe_0}^{(-R)} + \sum_{0<m_{i(j+1)} < R} T_{w-w(\frak t),m_{i(j+1)}} \xi_{\mathfrak t_0,\fe_0}^{(-m_{i(j+1)})} \\
&=\sum_{m=1}^R T_{w-w(\frak t),m} \xi_{\mathfrak t_0,\fe_0}^{(-m)}.
\end{align*}
\end{proof}

\subsubsection{Proof of Proposition \ref{prop: solving first equations} for $(\frak t,\fe)$: second part} \label{sec: t epsilon Part 2}

We are ready to prove the following result:
\begin{proposition} \label{prop: solving t epsilon 1}
We keep the above notation. 

\noindent a) If $\delta_{\frak t,\fe} \in \overline K[t]$ is a solution of Eq. \eqref{eq: delta t,e 0}, then
\begin{itemize}
\item[1)] $\delta_{\frak t,\fe}$ is of the following form
	\[ \delta_{\frak t,\fe}=f_{\frak t,\fe} \left(X^{m_{\frak t}}+\sum_{i=0}^{m_{\frak t}-1} P_{(\frak t,\fe),i}(T) X^i \right)\]
where
\begin{itemize}
\item $f_{\frak t,\fe} \in k_N[t]$,
\item for all $0 \leq i \leq m_{\frak t}-1$, $P_{(\frak t,\fe),i}(y) \in \Fq(y)$ with $v_y(P_{(\frak t,\fe),i}) \geq 1$.
\end{itemize}

\item[2)] For all $(\mathfrak t',\fe') \in J_1(\frak t,\fe)$, there exists $F_{(\frak t',\fe'),(\frak t'',\fe'')} \in \Fq(y)$ such that
	\[ f_{\frak t',\fe'}=f_{\frak t,\fe} F_{(\frak t,\fe),(\frak t',\fe')}(T). \]
\end{itemize}

\noindent b) Further, if $R=1$, then there exists a solution $\delta_{\frak t,\fe} \in \overline K[t]$ of Eq. \eqref{eq: delta t,e 0}.
\end{proposition}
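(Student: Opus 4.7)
The plan is to first reduce Eq.\ \eqref{eq: delta t,e 0} to a simpler form using the induction hypothesis, then apply Lemma \ref{lem: KuanLin 2} to obtain integrality and a $\theta$-degree bound on $\delta := \delta_{\frak t,\fe}$, and finally extract the stated polynomial-in-$X$ structure via the first-order recurrence built into the $\hat\xi$'s. For Part~(a), the induction hypothesis together with Lemma \ref{lem: first order} collapses the subsum over $\{(\frak t_0,\fe_0)\} \cup J_\infty(\frak t_0,\fe_0)$ into $f_{\frak t_0,\fe_0} \sum_{m=1}^{R} \hat\xi_{\frak t_0,\fe_0}^{(-m)} T_{w-w(\frak t),m}$, since $f_{\frak t_0,\fe_0} \in k_N[t]$ is fixed by $(-R)$-twisting. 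Eq.\ \eqref{eq: delta t,e 0} thus rewrites as
\[
\delta = \delta^{(-R)} T_{w-w(\frak t),R} + G^{(-R)} \quad \text{for some } G \in k_N[t,\theta],
\]
and Lemma \ref{lem: KuanLin 2} with $\epsilon = 1$ and $m = w-w(\frak t)$ then yields $\delta \in k_N[t,\theta]$ and $\deg_\theta \delta \leq qm_{\frak t}$.

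Next, since every $T_{w-w(\frak t),m}$ on the right is divisible by $(t-\theta)^{w-w(\frak t)}$, reducing the equation modulo $(t-\theta)^{w-w(\frak t)}$ forces $(t-\theta)^{w-w(\frak t)} \mid \delta$; combined with the degree bound, this strongly restricts the possible shape of $\delta$. The decisive step is to posit the ansatz $\delta = f_{\frak t,\fe} \cdot \hat\xi_{\frak t,\fe}$ with $f_{\frak t,\fe} \in k_N[t]$ and $\hat\xi_{\frak t,\fe} = X^{m_{\frak t}} + \sum_{i=0}^{m_{\frak t}-1} P_{(\frak t,\fe),i}(T) X^i$, substitute it into the displayed equation, and use Lemma \ref{lem: xi higher order} to convert between the first- and $R$-th-order recurrences. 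The equation then becomes equivalent to the pair of conditions: (i) $\hat\xi_{\frak t,\fe}$ satisfies the first-order relation of Part~3, which determines $\hat\xi_{\frak t,\fe}$ uniquely up to scalar; and (ii) each ratio $f_{\frak t_0,\fe_0}/f_{\frak t,\fe}$ lies in $\Fq(T)$, yielding Part~2.

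For Part~(b), when $R = 1$ the $\sigma^R$-equation is already a first-order $\sigma$-equation, and constructing a solution of the claimed form follows the same explicit descending procedure as in \cite[Theorem 2.4]{IKLNDP22}: pick $f_{\frak t,\fe} \in k_N[t]$ and determine the coefficients $P_{(\frak t,\fe),i}(T) \in \Fq(T)$ by descending induction on $i$, using the explicit form of the right-hand side. The hardest part will be completing Part~(a): the $\theta$-degree bound from Lemma \ref{lem: KuanLin 2} together with the $(t-\theta)^{w-w(\frak t)}$-divisibility only give crude information about $\delta$, and extracting the fine structure — coefficients living in $\Fq(T)$ rather than merely $k_N(t)$, and the $F_{(\frak t,\fe),(\frak t_0,\fe_0)}(T)$-rigidity of the ratios — requires leveraging the uniqueness of the first-order recurrence inherited from the $\hat\xi_{\frak t_0,\fe_0}$'s via Lemma \ref{lem: xi higher order}.
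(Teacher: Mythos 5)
Your reduction via Lemma \ref{lem: first order} to the single-equation form
$\delta = \delta^{(-R)} T_{w-w(\frak t),R} + G^{(-R)}$, and your application of Lemma \ref{lem: KuanLin 2} to obtain $\delta \in k_N[t,\theta]$ and $\deg_\theta \delta \leq qm_{\frak t}$, exactly match the first two moves of the paper's proof. So does the observation that every $T_{w-w(\frak t),\ell}$ with $\ell \geq 1$ is divisible by $(t-\theta)^{w-w(\frak t)}$, hence so is $\delta$.

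After that the proposal breaks down. Proposition \ref{prop: solving t epsilon 1}~(a) is a \emph{necessity} claim: any solution $\delta_{\frak t,\fe}\in\overline K[t]$ must be of the form $f_{\frak t,\fe}\bigl(X^{m_{\frak t}}+\sum_i P_{(\frak t,\fe),i}(T)X^i\bigr)$ with the $P$'s in $\Fq(y)$ and positive $y$-valuation, and the $f_{\frak t',\fe'}$ must equal $f_{\frak t,\fe}\cdot F_{(\frak t,\fe),(\frak t',\fe')}(T)$ with $F\in\Fq(y)$. ``Posit the ansatz, substitute, and observe the equation becomes equivalent to two conditions'' is at most a sufficiency argument; it does not rule out solutions of any other shape, and it does not explain why the coefficients of the $X$-expansion of $\delta$ must lie in $\Fq(T)$ rather than merely $k_N(t)$. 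You acknowledge in your final paragraph that this is ``the hardest part'' and gesture at ``leveraging the uniqueness of the first-order recurrence,'' but that uniqueness is precisely what part~(a) is supposed to establish, so the argument as sketched is circular.

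What actually forces the claimed structure, and what is entirely absent from your proposal, is the change of variables $T=t-t^q$, $X=t^q-\theta^q$ and a reduction modulo $T$. Concretely, after writing $\delta=F(t-\theta)^{m(q-1)}$, the paper shows $F(t-\theta)^{m(q-1)}\in k_N[t,\theta^q]=k_N[t][X]$, which combined with the degree bound forces
$F=\sum_{0\leq i\leq m/q} f_{m-iq}(t-\theta)^{m-iq}$ with $f_{m-iq}\in k_N[t]$. Expressing $(t-\theta)^{(j)}=T+T^q+\cdots+T^{q^{j-1}}+X^{q^{j-1}}$ and $X^{(j-1)}=T^q+\cdots+T^{q^{j-1}}+X^{q^{j-1}}$ and substituting yields an identity in $k_N[t][T,X]$ (Eq.\ \eqref{eq: delta t,e 2}). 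Reducing modulo $T$ and comparing the coefficients of powers of $X^{q^{R-1}}$ produces a linear system $B|_{y=T}(f_0,\dots,f_{m-1})^{\mathsf T}=f_m(Q_{m-1},\dots,Q_0)^{\mathsf T}|_{y=T}$ in which $B\in\Mat_m(\Fq(y))$ has $v_y(B_{ij})\geq 1$ for $i>j$, $v_y(B_{ij})\geq 0$ for $i<j$, and $B_{ii}=\pm1$. The unipotent-modulo-$y$ structure of $B$ forces $\det B\neq 0$, so all the $f_i$ are $\Fq(y)$-multiples of $f_m$ with positive $y$-valuation; this is exactly where the $\Fq(T)$-rigidity and the condition $v_y(P_{(\frak t,\fe),i})\geq 1$ come from. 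None of this appears in your proposal, and I do not see how to reach the conclusion without it.

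For part~(b) your idea of an explicit descending construction is plausible, but the paper's argument is cleaner and not quite the same: when $R=1$, Eq.\ \eqref{eq:F} and the linear system Eq.\ \eqref{eq: delta t,e 2b} are literally equivalent (there is no further ``$\bmod\, T$'' loss of information), so the invertibility of $B$ directly produces a solution of the original $\sigma$-equation of order~$1$. Absent the linear-system framework of part~(a), it is not clear your descending-induction sketch actually verifies all the required congruences.
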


\begin{proof}
For Part a), we recall $T=t-t^q$ and  $X=t^q-\theta^q$ given as in Eq. \eqref{eq: T and X}. For $(\frak t',\fe') \in J_1(\frak t;\fe)$, we write $\frak t'=(\frak t,(m-k)(q-1))$ with $0 \leq k < m$ and $k \not\equiv m \pmod{q}$, in particular $m_{\frak t'}=k$. We note that $\fe'$ is uniquely determined and equals $(\fe,1)$. We put $f_k=f_{\frak t',\fe'} \in k_N[t]$ and $P_{(\frak t',\fe'),j}=P_{k,j} \in \Fq(t)$ so that
\begin{equation} \label{eq: delta t,e 1}
\delta_{\frak t',\fe'}=f_k \left(X^k+\sum_{j=0}^{k-1} P_{k,j}(T) X^j \right) \in k_N[t,\theta^q].
\end{equation}
We recall
\begin{equation*} 
\xi_{\frak t',\fe'}=X^k+\sum_{j=0}^{k-1} P_{k,j}(T) X^j \in \Fq[\theta^q](t).
\end{equation*}

By Lemma \ref{lem: first order} we are reduced to find $\delta_{\mathfrak t,\fe} \in \overline K[t]$ and $f_{\frak t',\fe'} \in k_N[t]$ for $(\mathfrak t',\fe') \in J_1(\frak t;\fe)$ such that
\begin{equation} \label{eq: delta t,e}
\delta_{\mathfrak t,\fe}=\delta_{\mathfrak t,\fe}^{(-R)} T_{w-w(\frak t),R}+\sum_{(\mathfrak t',\fe') \in J_1(\frak t;\fe)} f_{\frak t',\fe'} \sum_{\ell=1}^R  \xi_{\mathfrak t',\fe'}^{(-\ell)} T_{w-w(\frak t),\ell}.
\end{equation}

By Lemma \ref{lem: KuanLin 2}, $\delta_{\frak t,\fe}$ belongs to $K_N[t]$, and
$\deg_\theta \delta_{\frak t,\fe} \leq mq$. Further, since $\delta_{\frak t,\fe}$ is divisible by $(t-\theta)^{m(q-1)}$,  we write $\delta_{\frak t,\fe}=F (t-\theta)^{m(q-1)}$ with $F \in K_N[t]$ and $\deg_\theta F \leq m$. Dividing Eq. \eqref{eq: delta t,e} by $(t-\theta)^{m(q-1)}$ and twisting $R$ times yields
\begin{align} \label{eq:F}
F^{(R)}& =F (t-\theta)^{m(q-1)} ((t-\theta)^{m(q-1)})^{(R-1)} \dots ((t-\theta)^{m(q-1)})^{(1)} \\
& \quad +\sum_{(\mathfrak t',\fe') \in J_1(\frak t;\fe)} f_{\frak t',\fe'} \sum_{\ell=1}^R  \xi_{\mathfrak t',\fe'}^{(R-\ell)} ((t-\theta)^{m(q-1)})^{(R-1)} \dots ((t-\theta)^{m(q-1)})^{(R-\ell+1)}. \notag
\end{align}
As $\xi_{\frak t',\fe'} \in k_N[\theta^q](t)$ for all $(\frak t',\fe') \in J_1(\frak t;\fe)$, it follows that $F (t-\theta)^{m(q-1)} \in k_N[t,\theta^q]$. As $\deg_\theta F \leq m$, we get
	\[ F=\sum_{0 \leq i \leq m/q} f_{m-iq} (t-\theta)^{m-iq}, \quad \text{with $f_{m-iq} \in k_N[t]$}. \]
Thus
\begin{align*}
& F (t-\theta)^{m(q-1)} =\sum_{0 \leq i \leq m/q} f_{m-iq} (t-\theta)^{mq-iq}=\sum_{0 \leq i \leq m/q} f_{m-iq} X^{m-i}, \\
& F^{(R)} =\sum_{0 \leq i \leq m/q} f_{m-iq} (t-\theta^{q^R})^{m-iq}=\sum_{0 \leq i \leq m/q} f_{m-iq} (T+T^q+\dots+T^{q^{R-1}}+X^{q^{R-1}})^{m-iq}.
\end{align*}
For all $j \geq 1$, we write
\begin{align*}
(t-\theta)^{(j)}&=t-\theta^{q^j}=T+T^q+\dots+T^{q^{j-1}}+X^{q^{j-1}}, \\
X^{(j-1)}&=t^q-\theta^{q^j}=T^q+\dots+T^{q^{j-1}}+X^{q^{j-1}}.
\end{align*}

Putting these equalities and Eq. \eqref{eq: delta t,e 1} into Eq. \eqref{eq:F} gets
\begin{align} \label{eq: delta t,e 2}
& \sum_{0 \leq i \leq m/q} f_{m-iq} (T+T^q+\dots+T^{q^{R-1}}+X^{q^{R-1}})^{m-iq}\\
&=\sum_{0 \leq i \leq m/q} f_{m-iq} X^{m-i} \prod_{j=1}^{R-1} (T+T^q+\dots+T^{q^{j-1}}+X^{q^{j-1}})^{m(q-1)} \notag \\
& +\sum_{\substack{0 \leq k<m \\ k \not\equiv m \pmod{q}}} f_k \sum_{\ell=1}^R \left(X^k+\sum_{j=0}^{k-1} P_{k,j}(T) X^j \right)^{(R-\ell)} \prod_{j=R-\ell+1}^{R-1} (T+T^q+\dots+T^{q^{j-1}}+X^{q^{j-1}})^{m(q-1)}. \notag
\end{align}

Reducing both sides $\pmod{T}$ yields
\begin{align*}
& \sum_{0 < i \leq m/q} f_{m-iq} X^{(m-iq) q^{R-1}}=\sum_{0 < i \leq m/q} f_{m-iq} X^{m-i} \prod_{j=1}^{R-1} X^{m(q-1)q^{j-1}} \\
& \quad +\sum_{\substack{0 \leq k<m \\ k \not\equiv m \pmod{q}}} f_k \sum_{\ell=1}^R X^{k q^{R-\ell}} \prod_{j=R-\ell+1}^{R-1} X^{m(q-1)q^{j-1}} \pmod{T},
\end{align*}
or equivalently
\begin{align*}
& \sum_{0 < i \leq m/q} f_{m-iq} X^{(m-iq) q^{R-1}}=\sum_{0 < i \leq m/q} f_{m-iq} X^{mq^{R-1}-i}  \\
& \quad +\sum_{\substack{0 \leq k<m \\ k \not\equiv m \pmod{q}}} f_k \sum_{\ell=1}^R X^{k q^{R-\ell}+mq^{R-1}-m q^{R-\ell}} \pmod{T},
\end{align*}

Comparing the coefficients of powers of $X^{q^{R-1}}$ of Eq. \eqref{eq: delta t,e 2} yields the following linear system in the variables $f_0,\dots,f_{m-1}$:
\begin{equation} \label{eq: delta t,e 2b}
B_{\big| y=T} \begin{pmatrix}
f_{m-1} \\
\vdots \\
f_0
\end{pmatrix}=f_m \begin{pmatrix}
Q_{m-1} \\
\vdots \\
Q_0
\end{pmatrix}_{\big| y=T}.
\end{equation} 
Here for $0 \leq i \leq m-1$, $Q_i \in y\Fq[y]$ and $B=(B_{ij})_{0 \leq i,j \leq m-1} \in \Mat_m(\Fq(y))$ such that
\begin{itemize}
\item $v_y(B_{ij}) \geq 1$ if $i>j$,
\item $v_y(B_{ij}) \geq 0$ if $i<j$,
\item $v_y(B_{ii})=0$ as $B_{ii}=\pm 1$.
\end{itemize}
The above properties follow from the fact that $P_{k,i} \in \Fq(y)$ and $v_y(P_{k,i}) \geq 1$.
Thus $v_y(\det B)=0$ so that $\det B \neq 0$. It follows that for all $0 \leq i \leq m-1$, $f_i=f_m P_i(T)$ with $P_i \in \Fq(y)$ and $v_y(P_i) \geq 1$. Thus Part a) is proved.

To conclude, we prove Part b). We want to show that if $R=1$, then there exists a solution $\delta_{\frak t,\fe} \in \overline K[t]$ of Eq. \eqref{eq: delta t,e 0}. In fact, since $R=1$, we see that Eqs. \eqref{eq:F} and \eqref{eq: delta t,e 2b} are equivalent and we are done. 
\end{proof}

\subsubsection{Proof of Proposition \ref{prop: solving first equations} for $(\frak t,\fe)$: last part} \label{sec: t epsilon Part 3}

We continue the proof of Proposition \ref{prop: solving first equations} for $(\frak t,\fe)$. Recall that we want to solve Eq. \eqref{eq: delta t,e 0}, i.e., to find $\delta_{\frak t,\fe} \in \overline K[t]$ with the unknown parameters $f_{\frak t',\fe'} \in k_N[t]$ where $(\frak t',\fe')$ runs through the set $J_1(\frak t,\fe)$. By Proposition \ref{prop: solving t epsilon 1} there exists at most one solution up to a scalar in $k_N(t)$. Further, if $R=1$, then we get a stronger statement, i.e., there exists a unique solution up to a scalar in $k(t)$. 

To finish the proof of Proposition \ref{prop: solving first equations} for $(\frak t,\fe)$, it suffices to exhibit a solution of Eq. \eqref{eq: delta t,e 0}, or equivalently Eq. \eqref{eq: delta t,e}, such that 
\begin{itemize}
\item $\delta_{\frak t,\fe} \in K[t]$ and $f_{\frak t',\fe'} \in \Fq[t]$ where $(\frak t',\fe')$ runs through the set $J_1(\frak t,\fe)$,

\item The following $\sigma$-equation of order 1 holds
\begin{equation} \label{eq: delta t,e 3}
\delta_{\mathfrak t,\fe}=\delta_{\mathfrak t,\fe}^{(-1)} (t-\theta)^{w-w(\frak t)}+\sum_{(\mathfrak t',\fe') \in J_1(\frak t;\fe)} \delta_{\mathfrak t',\fe'}^{(-1)} (t-\theta)^{w-w(\frak t)}.
\end{equation}
\end{itemize}
In fact, by Proposition \ref{prop: solving t epsilon 1}, Part b), there exist $\delta_{\frak t,\fe} \in K[t]$ and $f_{\frak t',\fe'} \in \Fq[t]$ where $(\frak t',\fe')$ runs through the set $J_1(\frak t,\fe)$ such that Eq. \eqref{eq: delta t,e 3} holds. We claim that these elements $\delta_{\frak t,\fe} \in K[t]$ and $f_{\frak t',\fe'} \in \Fq[t]$ for $(\frak t',\fe') \in J_1(\frak t,\fe)$ verify Eq. \eqref{eq: delta t,e}. By similar arguments as those given in the proof of Lemma \ref{lem: xi higher order} and the fact that $f_{\frak t',\fe'}^{(-1)}=f_{\frak t',\fe'}$ as $f_{\frak t',\fe'} \in \Fq[t]$ for all $(\frak t',\fe') \in J_1(\frak t,\fe)$, we show by induction on $n \in \mathbb N$ that
\begin{equation} \label{eq: delta t,e 4}
\delta_{\mathfrak t,\fe}=\delta_{\mathfrak t,\fe}^{(-n)} T_{w-w(\frak t),n}+\sum_{(\mathfrak t',\fe') \in J_1(\frak t;\fe)} f_{\frak t',\fe'} \sum_{\ell=1}^n  \xi_{\mathfrak t',\fe'}^{(-\ell)} T_{w-w(\frak t),\ell}.
\end{equation}
We conclude by applying Eq. \eqref{eq: delta t,e 4} for $n=R$.

\subsection{Solving Eq. \eqref{eq: delta emptyset}} \ppar

Our last task is to solve Eq. \eqref{eq: delta emptyset}. We have some extra work as we may have some pairs $(\frak t',\fe') \in J_1(\emptyset)$ with the same tuple $\frak t'$. Further, there are a factor $\mu^{(-R)}$ and some powers of $\epsilon$ in the terms $\Phi'_{(\mathfrak t';\fe'),\emptyset}$ on the right-hand side of Eq. \eqref{eq: delta emptyset}. 

As there exists $\epsilon \in \Gamma_N$ such that $\epsilon_{i1}^R=\epsilon^R$ for all $i$, there exists $\mu \in \overline{\mathbb F}_q$ such that $\mu^{(R)}=\mu \epsilon^R$ and $\mu_{i1}=\mu$ for all $i$.
We use $\mu^{(-R)}=\mu/\epsilon^R$ and put $\delta:=\delta_{\emptyset,\emptyset}/\mu \in \overline K[t]$. Then we have to solve 
\begin{align*}
\epsilon^R \delta &=\delta^{(-R)} T_{w,R}+ \epsilon^R \sum_{(\mathfrak t',\fe')=(s_{i1},\dots,s_{i k};\epsilon_{i1},1,\dots,1)  \in J_\infty(\emptyset)} \delta_{\mathfrak t',\fe'}^{(-R)} T_{w-w(\mathfrak t'),R} \times \\
& \hspace{2cm} \times \sum_{0<m_{i1}<\dots<m_{ik} \leq R} \epsilon_{i1}^{-m_{i1}} T_{s_{i1},m_{i1}} \dots T_{s_{ik},m_{ik}}. \notag
\end{align*}
By Lemma \ref{lem: first order} and using the fact that $\epsilon_{i1}^R=\epsilon^R$ for all $i$ we are reduced to solve
\begin{equation} \label{eq: delta emptyset2}
\epsilon^R \delta =\delta^{(-R)} T_{w,R}+\sum_{(\mathfrak t',\fe')=(s_{i1},\epsilon_{i1}) \in J_1(\emptyset)} f_{\frak t',\fe'} \sum_{\ell=1}^R  \xi_{\mathfrak t',\fe'}^{(-\ell)} \epsilon_{i1}^{R-\ell} T_{w,\ell}.
\end{equation}

We distinguish two cases.
\subsubsection{Case 1: $q-1 \nmid w$ and we write $w=m(q-1)+r$ with $0<r<q-1$} \ppar

We denote by $I(\epsilon,R)$ the set of $\epsilon_i \in \Gamma_N$ such that $\epsilon_i^R=\epsilon^R$.

We know that for all $(\frak t',\fe') \in J_1(\emptyset)$, says $\frak t'=((m-k)(q-1)+r)$ with $0 \leq k \leq m$  and $k \not\equiv m-r \pmod{q}$, and $\fe'=\epsilon_i$ for some $\epsilon_i \in I(\epsilon,R)$. We write $(k,\epsilon_i)$ instead of $(\frak t',\fe')=((m-k)(q-1)+r,\epsilon_i)$. We know
\begin{equation} \label{eq: delta4}
\delta_{k,\epsilon_i}=\delta_{\frak t',\fe'}=f_{k,\epsilon_i} \left(X^k+\sum_{j=0}^{k-1} P_{k,j}(T) X^j \right) \in k_N[t,\theta^q]
\end{equation}
where
\begin{itemize}
\item $f_{k,\epsilon_i} \in k_N[t]$,
\item for all $0 \leq j \leq k-1$, $P_{k,j}(y)$ belongs to $\Fq(y)$ with $v_y(P_{k,j}) \geq 1$.
\end{itemize}
We put
\begin{equation*} 
\xi_{k,\epsilon_i}=X^k+\sum_{j=0}^{k-1} P_{k,j}(T) X^j \in \Fq[t,\theta^q].
\end{equation*}
We mention that the coefficients $P_{k,j}(T)$ do not depend on the character $\epsilon_i$.

By Lemma \ref{lem: KuanLin 2}, $\delta$ belongs to $K_N[t]$. We claim that $\deg_\theta \delta \leq mq$. Otherwise, we have $\deg_\theta \delta_\emptyset > mq$. Twisting Eq. \eqref{eq: delta emptyset2} $R$ times gets
\begin{equation*}
\epsilon^R \delta^{(R)}=\delta  T_{w,R}^{(R)}+\sum_{(k,\epsilon_i) \in J_1(\emptyset)} f_{k,\epsilon_i} \sum_{\ell=1}^R  \xi_{k,\epsilon_i}^{(R-\ell)} \epsilon_i^{-\ell} T_{w,\ell}^{(R)}.
\end{equation*}
As $\deg_\theta \delta > mq$, we compare the degrees of $\theta$ on both sides and obtain
	\[ q\deg_\theta \delta=\deg_\theta \delta+wq. \]
Thus $q-1 \mid w$, which is a contradiction. We conclude that $\deg_\theta \delta \leq mq$.

From Eq. \eqref{eq: delta emptyset2} we see that $\delta$ is divisible by $(t-\theta)^w$. Thus we write $\delta=F (t-\theta)^w$ with $F \in K_N[t]$ and $\deg_\theta F \leq mq-w=m-r$. Dividing Eq. \eqref{eq: delta emptyset2} by $(t-\theta)^w$ and twisting $R$ times yields
\begin{align} \label{eq:F1}
\epsilon^R F^{(R)}& =F (t-\theta)^{w} ((t-\theta)^{w})^{(R-1)} \dots ((t-\theta)^{w})^{(1)} \\
& \quad +\sum_{(k,\epsilon_i) \in J_1(\emptyset)} f_{k,\epsilon_i} \sum_{\ell=1}^R  \xi_{k,\epsilon_i}^{(R-\ell)} \epsilon_i^{R-\ell} ((t-\theta)^{w})^{(R-1)} \dots ((t-\theta)^{w})^{(R-\ell+1)}. \notag
\end{align}

Since $\xi_{k,\epsilon_i} \in \Fq[t,\theta^q]$  for all $(k,\epsilon_i) \in J_1(\emptyset)$, it follows that $F (t-\theta)^w \in k_N[t,\theta^q]$. As $\deg_\theta F \leq m-r$, we write
	\[ F=\sum_{0 \leq i \leq (m-r)/q} f_{m-r-iq} (t-\theta)^{m-r-iq}, \quad \text{for $f_{m-r-iq} \in k_N[t]$}. \]
It follows that
\begin{align*}
& F (t-\theta)^w =\sum_{0 \leq i \leq (m-r)/q} f_{m-r-iq} (t-\theta)^{mq-iq}=\sum_{0 \leq i \leq (m-r)/q} f_{m-r-iq} X^{m-i}, \\
& F^{(R)} =\sum_{0 \leq i \leq (m-r)/q} f_{m-r-iq} (t-\theta^{q^R})^{m-r-iq}=\sum_{0 \leq i \leq (m-r)/q} f_{m-r-iq} (T+T^q+\dots+T^{q^{R-1}}+X^{q^{R-1}})^{m-r-iq}.
\end{align*}
Combining the previous equations with Eq. \eqref{eq: delta4} into Eq. \eqref{eq:F1} implies
\begin{align*}
& \epsilon^R \sum_{0 \leq i \leq (m-r)/q} f_{m-r-iq} (T+T^q+\dots+T^{q^{R-1}}+X^{q^{R-1}})^{m-r-iq} \\
&=\sum_{0 \leq i \leq (m-r)/q} f_{m-r-iq} X^{m-i} \prod_{j=1}^{R-1} (T+T^q+\dots+T^{q^{j-1}}+X^{q^{j-1}})^{m(q-1)+r} \\
& \quad +\sum_{\substack{0 \leq k<m \\ k \not\equiv m-r \pmod{q}}} \sum_{\epsilon_i \in I(\epsilon,R)} f_{k,\epsilon_i} \sum_{\ell=1}^R \left(X^k+\sum_{j=0}^{k-1} P_{k,j}(T) X^j \right)^{(R-\ell)} \epsilon_i^{R-\ell}\times  \\
& \quad  \quad \times \prod_{j=R-\ell+1}^{R-1} (T+T^q+\dots+T^{q^{j-1}}+X^{q^{j-1}})^{m(q-1)+r}.
\end{align*}

Taking $\pmod{T}$ yields
\begin{align} \label{eq: delta 10}
& \epsilon^R \sum_{0 \leq i \leq (m-r)/q} f_{m-r-iq} X^{q^{R-1}(m-r-iq)} \\ \notag
&=\sum_{0 \leq i \leq (m-r)/q} f_{m-r-iq} X^{m-i} \prod_{j=1}^{R-1} X^{q^{j-1}(m(q-1)+r)} \\ \notag
& \quad +\sum_{\substack{0 \leq k<m \\ k \not\equiv m-r \pmod{q}}} \sum_{\epsilon_i \in I(\epsilon,R)} f_{k,\epsilon_i} \sum_{\ell=1}^R X^{kq^{R-\ell}} \epsilon_i^{R-\ell} \prod_{j=R-\ell+1}^{R-1} X^{q^{j-1}(m(q-1)+r)} \\ \notag
&=\sum_{0 \leq i \leq (m-r)/q} f_{m-r-iq} X^{mq^{R-1}-i+r \frac{q^{R-1}-1}{q-1}} \\ \notag
& \quad +\sum_{\substack{0 \leq k<m \\ k \not\equiv m-r \pmod{q}}} \sum_{\epsilon_i \in I(\epsilon,R)} f_{k,\epsilon_i} \sum_{\ell=1}^R \epsilon_i^{R-\ell} X^{mq^{R-1}-mq^{R-\ell}+kq^{R-\ell}+r \frac{q^{R-1}-q^{R-\ell}}{q-1}}.
\end{align}

We set $R_0:=|I(\epsilon,R)| \leq R$. We consider the following sets
\begin{align*}
\mathcal S_1 &=\{X^{q^{R-1}(m-r-iq)}: 0 \leq i \leq (m-r)/q \}, \\
\mathcal S_2 &=\{X^{mq^{R-1}-mq^{R-\ell}+kq^{R-\ell}+r \frac{q^{R-1}-q^{R-\ell}}{q-1}}: 0 \leq k<m, k \not\equiv m-r \pmod{q}; 1 \leq \ell \leq R\} \\
\mathcal S_{2,R_0} &=\{X^{mq^{R-1}-mq^{R-\ell}+kq^{R-\ell}+r \frac{q^{R-1}-q^{R-\ell}}{q-1}}: 0 \leq k<m, k \not\equiv m-r \pmod{q}; 1 \leq \ell \leq R_0\} \\
\mathcal S &=\mathcal S_1 \cup \mathcal S_{2,R_0}.
\end{align*}
Then the elements in $\mathcal S_1 \cup \mathcal S_2$ are pairwise distinct. Thus the cardinality of the set $\mathcal S$ equals the number of variables 
\begin{align*}
\mathcal V &=\{f_{m-r-iq}: 0 \leq i \leq (m-r)/q \} \\
& \quad \cup \{f_{k,\epsilon_i}: 0 \leq k<m, k \not\equiv m-r \pmod{q}; \epsilon_i \in I(\epsilon,R)\}.
\end{align*}

Comparing the coefficients of $X^k$ with $X^k \in \mathcal S$ yields the following linear system in the variables $f_* \in \mathcal V$:
\begin{equation*}
B_{\big| y=T} (f_*)_{|\mathcal V| \times 1}=0.
\end{equation*}
Here $B=(B_{ij})_{0 \leq i,j \leq m} \in \Mat_{|\mathcal V|}(k_N(y))$. 

We claim that $B \pmod{T}$ is invertible. In fact, it suffices to prove the following statement: suppose that 
\begin{align*}
& \epsilon^R \sum_{0 \leq i \leq (m-r)/q} f_{m-r-iq} X^{q^{R-1}(m-r-iq)} =\sum_{0 \leq i \leq (m-r)/q} f_{m-r-iq} X^{mq^{R-1}-i+r \frac{q^{R-1}-1}{q-1}} \\ 
& \hspace{2cm} +\sum_{\substack{0 \leq k<m \\ k \not\equiv m-r \pmod{q}}} \sum_{\epsilon_i \in I(\epsilon,R)} f_{k,\epsilon_i} \sum_{\ell=1}^{R_0} \epsilon_i^{R-\ell} X^{mq^{R-1}-mq^{R-\ell}+kq^{R-\ell}+r \frac{q^{R-1}-q^{R-\ell}}{q-1}}.
\end{align*}
Then we want to prove that $f_*=0$ for all $f_* \in \mathcal V$. In fact, since the elements in $\mathcal S_1 \cup \mathcal S_2$ are pairwise distinct, by descending induction on $i$ we show that $f_{m-r-iq} = 0$ for all $0 \leq i \leq (m-r)/q$. Therefore, 
\begin{align*}
0=\sum_{\substack{0 \leq k<m \\ k \not\equiv m-r \pmod{q}}} \sum_{\epsilon_i \in I(\epsilon,R)} f_{k,\epsilon_i} \sum_{\ell=1}^{R_0} \epsilon_i^{R-\ell} X^{mq^{R-1}-mq^{R-\ell}+kq^{R-\ell}+r \frac{q^{R-1}-q^{R-\ell}}{q-1}}.
\end{align*}
Again, by the fact that the elements in $\mathcal S_2$ are pairwise distinct and using a variant of Vandermonde's matrix, we conclude that $f_{k,\epsilon_i}=0$ for all pairs $(k,\epsilon_i)$.

As $B \pmod{T}$ is invertible, $B$ is invertible and $f_*=0$ for all $f_* \in \mathcal V$. It follows that $\delta_{\emptyset;\emptyset}=0$ and $\delta_{\frak t',\fe'}=0$ for all $(\frak t';\fe') \in J_1(\emptyset)$. We conclude that $\delta_{\frak t,\fe}=0$ for all $(\frak t,\fe) \in J$. In particular, for all $i$, $a_i(t)=\delta_{\fs_i,\fe_i}=0$, which is a contradiction. Thus this case can never happen.

\subsubsection{Case 2: $q-1 \mid w$, says $w=m(q-1)$} \ppar

By similar arguments as above, we show that $\delta=F (t-\theta)^{m(q-1)}$ with $F \in K_N[t]$ of the form
	\[ F=\sum_{0 \leq i \leq m/q} f_{m-iq} (t-\theta)^{m-iq}, \quad \text{for $f_{m-iq} \in k_N[t]$}. \]
Thus
\begin{align*}
& F (t-\theta)^{m(q-1)} =\sum_{0 \leq i \leq m/q} f_{m-iq} (t-\theta)^{mq-iq}=\sum_{0 \leq i \leq m/q} f_{m-iq} X^{m-i}, \\
& F^{(R)} =\sum_{0 \leq i \leq m/q} f_{m-iq} (t-\theta^{q^R})^{m-iq}=\sum_{0 \leq i \leq m/q} f_{m-iq} (T+T^q+\dots+T^{q^{R-1}}+X^{q^{R-1}})^{m-iq}.
\end{align*}

Putting these and Eq. \eqref{eq: delta t,e 1} into Eq. \eqref{eq: delta emptyset2} gets
\begin{align} \label{eq: delta 12}
& \epsilon^R \sum_{0 \leq i \leq m/q} f_{m-iq} (T+T^q+\dots+T^{q^{R-1}}+X^{q^{R-1}})^{m-iq}\\
&=\sum_{0 \leq i \leq m/q} f_{m-iq} X^{m-i} \prod_{j=1}^{R-1} (T+T^q+\dots+T^{q^{j-1}}+X^{q^{j-1}})^{m(q-1)} \notag \\
& \quad +\sum_{\substack{0 \leq k<m \\ k \not\equiv m \pmod{q}}} \sum_{\epsilon_i \in I(\epsilon,R)} f_{k,\epsilon_i} \sum_{\ell=1}^R \left(X^k+\sum_{j=0}^{k-1} P_{k,j}(T) X^j \right)^{(R-\ell)} \epsilon_i^{R-\ell} \notag\\
& \quad \quad \times  \prod_{j=R-\ell+1}^{R-1} (T+T^q+\dots+T^{q^{j-1}}+X^{q^{j-1}})^{m(q-1)}. \notag
\end{align}

Taking $\pmod{T}$ yields
\begin{align*}
& (\epsilon^R-1)f_m X^m+\sum_{0 < i \leq m/q} f_{m-iq} X^{(m-iq) q^{R-1}}=\sum_{0 < i \leq m/q} f_{m-iq} X^{m-i} \prod_{j=1}^{R-1} X^{m(q-1)q^{j-1}} \\
& \hspace{2cm} +\sum_{\substack{0 \leq k<m \\ k \not\equiv m \pmod{q}}} \sum_{\epsilon_i \in I(\epsilon,R)} f_{k,\epsilon_i} \sum_{\ell=1}^R X^{k q^{R-\ell}} \epsilon_i^{R-\ell} \prod_{j=R-\ell+1}^{R-1} X^{m(q-1)q^{j-1}} \pmod{T},
\end{align*}
or equivalently
\begin{align*}
& (\epsilon^R-1)f_m X^m+\sum_{0 < i \leq m/q} f_{m-iq} X^{(m-iq) q^{R-1}}=\sum_{0 < i \leq m/q} f_{m-iq} X^{mq^{R-1}-i}  \\
& \quad +\sum_{\substack{0 \leq k<m \\ k \not\equiv m \pmod{q}}} \sum_{\epsilon_i \in I(\epsilon,R)} f_{k,\epsilon_i} \sum_{\ell=1}^R \epsilon_i^{R-\ell} X^{k q^{R-\ell}+mq^{R-1}-m q^{R-\ell}} \pmod{T}.
\end{align*}

We set 
\[ \mathcal V=\{f_{m-iq}: 0 < i \leq m/q\} \cup \{f_{k,\epsilon_i}: 0 \leq k<m, k \not\equiv m \pmod{q}; \epsilon_i \in I(\epsilon,R)\}, \]
By similar arguments as in the case $q-1 \nmid w$, comparing the coefficients of powers of $X$ in Eq. \eqref{eq: delta 12} yields
	\[ \epsilon^R f_m=f_m \]
and the following linear system in the variables in $\mathcal V$:
\begin{equation*}
B_{\big| y=T} (f_*)_{|\mathcal V| \times 1}=f_m ((Q_*)_{|\mathcal V| \times 1})_{\big| y=T}.
\end{equation*}
Here all $Q_* \in y\Fq[y]$ and $B=(B_{ij}) \in \Mat_{|\mathcal V|}(k_N(y))$.  We can show that $B \pmod{T}$ is invertible, hence $B$ is invertible.

We distinguish two subcases.

\medskip
\noindent {\bf Subcase 1: $\epsilon^R \neq 1$.} \ppar
It follows that $f_m=0$. Then $f_*=0$ for all $f_* \in \mathcal V$. Thus $\delta_{\frak t,\fe}=0$ for all $(\frak t,\fe) \in J$. In particular, for all $i$, $a_i(t)=\delta_{\fs_i,\fe_i}=0$. This is a contradiction and we conclude that this case can never happen.

\medskip
\noindent {\bf Subcase 2: $\epsilon^R=1$.} \ppar

From the induction hypothesis for $(R,w)$ with $R+w<N+w$, it follows that $f_{k,\epsilon_i}=0$ whenever $\epsilon_i \neq 1$. We are reduced to solve a linear system in the variables 
\[ \mathcal V'=\{f_{m-iq}: 0 < i \leq m/q\} \cup \{f_{k,1}: 0 \leq k<m, k \not\equiv m \pmod{q} \} \]
given by
\begin{equation*}
B_{\big| y=T} (f_*)_{|\mathcal V'| \times 1}=f_m ((Q_*)_{|\mathcal V'| \times 1})_{\big| y=T}.
\end{equation*}
Here $f_m \in k_N[t]$, all $Q_* \in y\Fq[y]$ and $B=(B_{ij}) \in \Mat_{|\mathcal V'|}(\Fq(y))$. We note that the coefficients of $B$ belong to $\Fq(y)$ since the characters are all trivial. We have already know that $B$ is invertible. Hence we have shown that there exists at most one solution of Eqs. \eqref{eq: delta} and \eqref{eq: delta emptyset} up to a factor in $k_N(t)$. Recall that for all $i$, $a_i(t)=\delta_{\fs_i,\fe_i}$. Therefore, up to a scalar in $K_N^\times$, there exists at most one non-trivial relation
	\[ a \widetilde \pi^w+\sum_i a_i L(\fs_i;\fe_i)(\theta)=0 \]
with $a, a_i \in K_N$ and $\Li \begin{pmatrix}
 \fe  \\
\fs  \end{pmatrix} \in \mathcal{CS}_{N,w}$. Further, we must have $\fe_i=(1,\dots,1)$ for all $i$. Hence we have proved Theorem \ref{thm: trans CMPL at roots of unity}, Part 1.

By Lemma \ref{lem: relation Carlitz period}, there exists such a linear relation with $a \neq 0$. Combining this with Theorem \ref{thm: trans CMPL at roots of unity}, Part 1, we obtain Theorem \ref{thm: trans CMPL at roots of unity}, Part 2. Hence the proof of Theorem \ref{thm: trans CMPL at roots of unity} is finished.


\section{Proof of the main theorems} \label{sec:applications} 

We now prove Theorems \ref{thm: Hoffman cyclotomic MZV} and \ref{thm: Zagier cyclotomic MZV}.

\begin{proof}[Proof of Theorem \ref{thm: Hoffman cyclotomic MZV}]
By Theorem \ref{thm: transcendence} the CMPL's at roots of unity in $\mathcal{CS}_{N,w}$ are all linearly independent over $K_N$. Then by Theorem \ref{thm: strong BD MZV} we deduce that the set  $\mathcal{CS}_{N,w}$ form a basis for $\mathcal{CL}_{N,w}$. 
\end{proof}

\begin{proof}[Proof of Theorem \ref{thm: Zagier cyclotomic MZV}]
We recall
\begin{equation*}
    d_N(w) = \begin{cases}
    		1 & \text{if } w=0, \\
		\gamma_N (\gamma_N+1)^{w-1}& \text{if } 1 \leq w < q, \\
         \gamma_N ((\gamma_N+1)^{w-1} - 1) &\text{if } w = q,
		 \end{cases}
\end{equation*}
and for $w>q$, $d_N(w)=\gamma_N \sum \limits_{i = 1}^{q-1} d_N(w-i) + d_N(w - q)$. Then we want to show that
	\[ \dim_{K_N} \mathcal{CZ}_{N,w} = d_N(w). \]

We note that if we set $d_N(w) = 0$ for $w < 0$, then the equality $d_N(w)=\gamma_N\sum \limits_{i = 1}^{q-1} d_N(w-i) + d_N(w - q)$ holds for every integer $w \neq 0, q$.

For $w \in \mathbb N$ we put $d_N'(w) = |\mathcal{CS}_{N,w}|$. We also set $d_N'(w)=1$ for $w=0$ and $d_N'(w) = 0$ for $w < 0$. Then the equality $d_N'(w)=\gamma_N\sum \limits_{i = 1}^{q-1} d_N'(w-i) + d_N'(w - q)$ holds for all $w \in \mathbb N$ and $w \neq q$. Furthermore, for $w=q$ we have $d_N'(q)=\gamma_N\sum \limits_{i = 1}^{q-1} d_N'(q-i)$. Hence it follows that for all $w \in \mathbb N$, $d_N'(w)=d_N(w)$. We combine this equality with Theorem \ref{thm: Hoffman cyclotomic MZV} to obtain Theorem \ref{thm: Zagier cyclotomic MZV}.
\end{proof}



\begin{thebibliography}{10}
\bibitem{And86}
G.~Anderson.
\newblock {{$t$}-motives}.
\newblock {\em Duke Math. J.}, 53(2):457--502, 1986.

\bibitem{ABP04}
G. Anderson, W. D. Brownawell, and M. Papanikolas.
\newblock {Determination of the algebraic relations among special
  {$\Gamma$}-values in positive characteristic}.
\newblock {\em Ann. of Math. (2)}, 160(1):237--313, 2004.

\bibitem{AT90}
G.~Anderson and D.~Thakur.
\newblock {Tensor powers of the Carlitz module and zeta values}.
\newblock {\em Ann. of Math. (2)}, 132(1):159--191, 1990.

\bibitem{AT09}
G.~Anderson and D.~Thakur.
\newblock {Multizeta values for {$\mathbb F_q[t]$}, their period interpretation,
  and relations between them}.
\newblock {\em {Int. Math. Res. Not. IMRN}}, (11):2038--2055, 2009.

\bibitem{AK99}
T.~Arakawa and M.~Kaneko.
\newblock Multiple zeta values, poly-{B}ernoulli numbers, and related zeta
  functions.
\newblock {\em Nagoya Math. J.}, 153:189--209, 1999.

\bibitem{AK04}
T.~Arakawa and M.~Kaneko.
\newblock On multiple {$L$}-values.
\newblock {\em J. Math. Soc. Japan}, 56(4):967--991, 2004.

\bibitem{BPP20}
P.~Banks, E.~Panzer, and B.~Pym.
\newblock Multiple zeta values in deformation quantization.
\newblock {\em Invent. Math.}, 222(1):79--159, 2020.

\bibitem{Beu06}
F.~Beukers.
\newblock A refined version of the {S}iegel-{S}hidlovskii theorem.
\newblock {\em Ann. of Math. (2)}, 163(1):369--379, 2006.

\bibitem{BBV10}
J.~Bl\"{u}mlein, D.~Broadhurst, and J.~Vermaseren.
\newblock The multiple zeta value data mine.
\newblock {\em Comput. Phys. Comm.}, 181(3):582--625, 2010.

\bibitem{Bro96a}
D.~J.~Broadhurst.
\newblock On the enumeration of irreducible $k$-fold Euler sums and their roles in knot theory and field theory.
\newblock{\em arxiv.org/abs/hep-th/9604128}, 1996

\bibitem{Bro14b}
D.~J. Broadhurst.
\newblock Multiple Deligne Values: a data mine with empirically tamed
  denominators.
\newblock {\em arXiv:1409.7204v1}, 2014.

\bibitem{BK97}
D.~Broadhurst. and D.~Kreimer.
\newblock Association of multiple zeta values with positive knots via {F}eynman
  diagrams up to {$9$} loops.
\newblock {\em Phys. Lett. B}, 393(3-4):403--412, 1997.

\bibitem{Bro09}
F.~Brown.
\newblock Multiple zeta values and periods of moduli spaces
  {$\overline{\mathcal M}_{0,n}$}.
\newblock {\em Ann. Sci. \'{E}c. Norm. Sup\'{e}r. (4)}, 42(3):371--489, 2009.

\bibitem{Bro12}
F.~Brown.
\newblock {Mixed {T}ate motives over {$\mathbb Z$}}.
\newblock {\em Ann. of Math. (2)}, 175:949--976, 2012.

\bibitem{Bro12b}
F.~Brown.
\newblock On the decomposition of motivic multiple zeta values.
\newblock In {\em Galois-{T}eichm\"{u}ller theory and arithmetic geometry},
  volume~63 of {\em Adv. Stud. Pure Math.}, pages 31--58. Math. Soc. Japan,
  Tokyo, 2012.

\bibitem{Bro14}
F.~Brown.
\newblock Motivic periods and {$\mathbb{P}^1 \setminus \{0,1,\infty\}$}.
\newblock In {\em Proceedings of the {I}nternational {C}ongress of
  {M}athematicians---{S}eoul 2014. {V}ol. {II}}, pages 295--318. Kyung Moon Sa,
  Seoul, 2014.

\bibitem{BGF}
J.~{Burgos Gil} and J.~Fresan.
\newblock {\em Multiple zeta values: from numbers to motives}.
\newblock To appear, Clay Mathematics Proceedings.

\bibitem{CDPP24}
D.~Calaque, C.~Dupont, E.~Panzer, and B.~Pym.
\newblock Deformation quantization with branes and MZV's.
\newblock {\em Work in progress}, 2024.

\bibitem{Car35}
L.~Carlitz.
\newblock {On certain functions connected with polynomials in Galois field}.
\newblock {\em Duke Math. J.}, 1(2):137--168, 1935.

\bibitem{Cha14}
C.-Y. Chang.
\newblock {Linear independence of monomials of multizeta values in positive
  characteristic}.
\newblock {\em Compos. Math.}, 150(11):1789--1808, 2014.

\bibitem{CCM22}
C.-Y. Chang, Y.-T Chen, and Y.~Mishiba.
\newblock {On Thakur’s basis conjecture for multiple zeta values in positive
  characteristic}.
\newblock {\em arXiv:2205.09929}, 2022. To appear, Forum Math. Pi.

\bibitem{CPY19}
C.-Y. Chang, M.~Papanikolas, and J.~Yu.
\newblock {An effective criterion for {E}ulerian multizeta values in positive characteristic}.
\newblock {\em J. Eur. Math. Soc. (JEMS)}, 21(2):405--440, 2019.

\bibitem{Che15}
H.-J. Chen.
\newblock {On shuffle of double zeta values over {$\Bbb{F}_q[t]$}}.
\newblock {\em J. Number Theory}, 148:153--163, 2015.

\bibitem{Del10}
P.~Deligne.
\newblock {Le groupe fondamental unipotent motivique de {$G_m-\mu_N$}, pour {$N=2,3,4,6$} ou {$8$}}.
\newblock {\em {Publ. Math. Inst. Hautes \'{E}tudes Sci.}}, 112:101--141, 2010.

\bibitem{Del13}
P.~Deligne.
\newblock Multiz\^{e}tas, d'apr\`es {F}rancis {B}rown.
\newblock Number 352, pages Exp. No. 1048, viii, 161--185, 2013.
\newblock S\'{e}minaire Bourbaki. Vol. 2011/2012. Expos\'{e}s 1043--1058.

\bibitem{DG05}
P.~Deligne and A.~Goncharov.
\newblock Groupes fondamentaux motiviques de {T}ate mixte.
\newblock {\em Ann. Sci. \'{E}cole Norm. Sup. (4)}, 38(1):1--56, 2005.

\bibitem{Dri90}
V.~Drinfeld.
\newblock On quasitriangular quasi-{H}opf algebras and on a group that is
  closely connected with {${\rm Gal}(\overline{\bf Q}/{\bf Q})$}.
\newblock {\em Algebra i Analiz}, 2(4):149--181, 1990.

\bibitem{Eul76}
L.~Euler.
\newblock Meditationes circa singulare serierum genus.
\newblock {\em Novi commentarii academiae scientiarum Petropolitanae}, 140--186, 1776.

\bibitem{GKZ06}
H.~Gangl, M.~Kaneko, and D.~Zagier.
\newblock Double zeta values and modular forms.
\newblock In {\em Automorphic forms and zeta functions}, pages 71--106. World
  Sci. Publ., Hackensack, NJ, 2006.

\bibitem{Gla16}
C.~Glanois.
\newblock Motivic unipotent fundamental groupoid of {$\mathbb G_m -\mu_N$} for
  {$N=2,3,4,6,8$} and {G}alois descents.
\newblock {\em J. Number Theory}, 160:334--384, 2016.

\bibitem{Gon95}
A.~Goncharov.
\newblock Polylogarithms in arithmetic and geometry.
\newblock In {\em Proceedings of the {I}nternational {C}ongress of
  {M}athematicians, {V}ol. 1, 2 ({Z}\"{u}rich, 1994)}, pages 374--387.
  Birkh\"{a}user, Basel, 1995.

\bibitem{Gon01b}
A.~Goncharov.
\newblock The dihedral {L}ie algebras and {G}alois symmetries of
  {$\pi_1^{(l)}(\Bbb P^1-(\{0,\infty\}\cup\mu_N))$}.
\newblock {\em Duke Math. J.}, 110(3):397--487, 2001.

\bibitem{Gon01}
A.~Goncharov.
\newblock Multiple polylogarithms and mixed Tate motives.
\newblock {\em arXiv:math/0103059v4}, 2001.

\bibitem{Gon05}
A.~Goncharov.
\newblock Galois symmetries of fundamental groupoids and noncommutative geometry.
\newblock {\em Duke Math. J.}, 128(2):209--284, 2005.

\bibitem{Gos96}
D.~Goss.
\newblock {\em Basic Structures of function field arithmetic}, volume~{35} of
  {\em {Ergebnisse der Mathematik und ihrer Grenzgebiete (3)}}.
\newblock {Springer-Verlag, Berlin}, 1996.

\bibitem{Har21}
R.~Harada.
\newblock {Alternating multizeta values in positive characteristic}.
\newblock {\em {Math. Z.}}, 298(3-4):1263--1291, 2021.

\bibitem{Har23}
R.~Harada.
\newblock {Colored multizeta values in positive characteristic}.
\newblock {\em arXiv:2306.05019}, 2023.

\bibitem{HJ20}
U.~Hartl and A.~K. Juschka.
\newblock {Pink's theory of Hodge structures and the Hodge conjectures over
  function fields}.
\newblock In G.~B\"ockle, D.~Goss, U.~Hartl, and M.~Papanikolas, editors, {\em
  {$t$-motives: Hodge structures, transcendence and other motivic aspects", EMS
  Series of Congress Reports}}, pages 31--182. European Mathematical Society,
  2020.

\bibitem{Hof97}
M.~Hoffman.
\newblock The algebra of multiple harmonic series.
\newblock {\em J. Algebra}, 194:477--495, 1997.

\bibitem{Hof19}
M.~Hoffman.
\newblock {An odd variant of multiple zeta values}.
\newblock {\em {Commun. Number Theory Phys.}}, 13(3):529--567, 2019.

\bibitem{IKZ06}
K.~Ihara, M.~Kaneko, and D.~Zagier.
\newblock Derivation and double shuffle relations for multiple zeta values.
\newblock {\em Compos. Math.}, 142(2):307--338, 2006.

\bibitem{Iha89}
Y.~Ihara.
\newblock The {G}alois representation arising from {${\bf P}^1-\{0,1,\infty\}$}
  and {T}ate twists of even degree.
\newblock In {\em Galois groups over {${\bf Q}$} ({B}erkeley, {CA}, 1987)},
  volume~16 of {\em Math. Sci. Res. Inst. Publ.}, pages 299--313. Springer, New
  York, 1989.

\bibitem{Iha91}
Y.~Ihara.
\newblock Braids, {G}alois groups, and some arithmetic functions.
\newblock In {\em Proceedings of the {I}nternational {C}ongress of
  {M}athematicians, {V}ol. {I}, {II} ({K}yoto, 1990)}, pages 99--120. Math.
  Soc. Japan, Tokyo, 1991.

\bibitem{IKLNDP22}
B.-H. Im, H.~Kim, K.~N. Le, T.~{Ngo Dac}, and L.~H. Pham.
\newblock {Zagier-Hoffman's conjectures in positive characteristic}.
\newblock {\em {available at https://hal.archives-ouvertes.fr/hal-03667755}},
  2022.

\bibitem{IKLNDP23}
B.-H. Im, H.~Kim, K.~N. Le, T.~{Ngo Dac}, and L.~H. Pham.
\newblock {Hopf algebras and multiple zeta values in positive characteristic}.
\newblock {\em {available at https://hal.archives-ouvertes.fr/hal-03937702}}, 2023.

\bibitem{Iwa69}
K.~Iwasawa.
\newblock Analogies between number fields and function fields.
\newblock In {\em Some {R}ecent {A}dvances in the {B}asic {S}ciences, {V}ol. 2
  ({P}roc. {A}nnual {S}ci. {C}onf., {B}elfer {G}rad. {S}chool {S}ci., {Y}eshiva
  {U}niv., {N}ew {Y}ork, 1965-1966)}, pages 203--208. Yeshiva Univ., Belfer
  Graduate School of Science, New York, 1969.

\bibitem{KT13}
M.~Kaneko, K.~Tasaka.
\newblock Double zeta values, double Eisenstein series, and modular forms of level.
\newblock {\em Math. Ann.}, 357(3): 1091-–1118, 2013.

\bibitem{Kon99}
M.~Kontsevich.
\newblock Operads and motives in deformation quantization.
\newblock {\em Lett. Math. Phys.}, 48(1):35--72, 1999.
\newblock Mosh\'{e} Flato (1937--1998).

\bibitem{Kon03}
M.~Kontsevich.
\newblock Deformation quantization of {P}oisson manifolds.
\newblock {\em Lett. Math. Phys.}, 66(3):157--216, 2003.

\bibitem{KL16}
Y.-L. Kuan and Y.-H. Lin.
\newblock {Criterion for deciding zeta-like multizeta values in positive
  characteristic}.
\newblock {\em Exp. Math.}, 25(3):246--256, 2016.

\bibitem{LM96}
T.~T.~Q. Le and J.~Murakami.
\newblock Kontsevich's integral for the {K}auffman polynomial.
\newblock {\em Nagoya Math. J.}, 142:39--65, 1996.

\bibitem{Li24}
J.~Li.
\newblock Unit cyclotomic multiple zeta values for {$\mu_{2}$}, {$\mu_{3}$} and
  {$\mu_{4}$}.
\newblock {\em Adv. Math.}, 438:109466, 2024.

\bibitem{MW83}
B.~Mazur and A.~Wiles.
\newblock Analogies between function fields and number fields.
\newblock {\em Amer. J. Math.}, 105(2):507--521, 1983.

\bibitem{Mur22}
T.~Murakami.
\newblock On {H}offman's {$t$}-values of maximal height and generators of
  multiple zeta values.
\newblock {\em Math. Ann.}, 382(1-2):421--458, 2022.

\bibitem{ND21}
T.~{Ngo Dac}.
\newblock {On Zagier-Hoffman's conjectures in positive characteristic}.
\newblock {\em {Ann. of Math. (2)}}, 194(1):361--392, 2021.

\bibitem{Pap08}
M.~Papanikolas.
\newblock {Tannakian duality for {A}nderson-{D}rinfeld motives and algebraic
 independence of {C}arlitz logarithms}.
\newblock {\em Invent. Math.}, 171(1):123--174, 2008.

\bibitem{Rac02}
G.~Racinet.
\newblock Doubles m\'{e}langes des polylogarithmes multiples aux racines de
  l'unit\'{e}.
\newblock {\em Publ. Math. Inst. Hautes \'{E}tudes Sci.}, (95):185--231, 2002.

\bibitem{Pel12}
F.~Pellarin.
\newblock {Values of certain $L$-series in positive characteristic}.
\newblock {\em Ann. of Math. (2)}, 176(3):2055--2093, 2012.

\bibitem{Ter02}
T.~Terasoma.
\newblock Mixed {T}ate motives and multiple zeta values.
\newblock {\em Invent. Math.}, 149(2):339--369, 2002.

\bibitem{Ter06}
T.~Terasoma.
\newblock Geometry of multiple zeta values.
\newblock In {\em International {C}ongress of {M}athematicians. {V}ol. {II}},
  pages 627--635. Eur. Math. Soc., Z\"{u}rich, 2006.

\bibitem{Tha04}
D.~Thakur.
\newblock {\em Function field arithmetic}.
\newblock {World Scientific Publishing Co., Inc., River Edge, NJ}, 2004.

\bibitem{Tha09}
D.~Thakur.
\newblock {Relations between multizeta values for {$\mathbb F_q[t]$}}.
\newblock {\em Int. Math. Res. Not.}, (12):2318--2346, 2009.

\bibitem{Tha10}
D.~Thakur.
\newblock {Shuffle relations for function field multizeta values}.
\newblock {\em Int. Math. Res. Not. IMRN}, (11):1973--1980, 2010.

\bibitem{Tha17}
D.~Thakur.
\newblock {Multizeta values for function fields: a survey}.
\newblock {\em {J. Th\'{e}or. Nombres Bordeaux}}, 29(3):997--1023, 2017.

\bibitem{Tod18}
G.~Todd.
\newblock A conjectural characterization for {$\mathbb F_q(t)$}-linear relations between multizeta values.
\newblock {\em J. Number Theory}, 187:264--28, 2018.

\bibitem{Wei39}
A.~Weil.
\newblock Sur l’analogie entre les corps de nombres algébrique et les corps
  de fonctions algébrique.
\newblock {\em Revue Scient.}, 77:104--106, 1939.

\bibitem{Yu91}
J.~Yu.
\newblock {Transcendence and special zeta values in characteristic $p$}.
\newblock {\em Ann. of Math. (2)}, 134(1):1--23, 1991.

\bibitem{Zag94}
D.~Zagier.
\newblock Values of zeta functions and their applications.
\newblock In {\em First {E}uropean {C}ongress of {M}athematics, {V}ol. {II}
  {P}aris, 1992)}, volume 120 of {\em Progr. Math.}, pages 497--512.
  Birkh\"{a}user, Basel, 1994.

\bibitem{Zag12}
D.~Zagier.
\newblock Evaluation of the multiple zeta values
  {$\zeta(2,\ldots,2,3,2,\ldots,2)$}.
\newblock {\em Ann. of Math. (2)}, 175(2):977--1000, 2012.
  
\bibitem{Zha16}
J.~Zhao.
\newblock {\em {Multiple zeta functions, multiple polylogarithms and their
  special values}}, volume~12 of {\em {Series on Number Theory and its
  Applications}}.
\newblock {World Scientific Publishing Co. Pte. Ltd., Hackensack, NJ}, 2016.
\end{thebibliography}
\end{document}